\documentclass{amsart}
   
\newcommand{\comments}[1]{}
\usepackage{currfile}
\usepackage{amssymb}

\usepackage{xcolor}
\newcommand{\Cal}{\mathcal}

\theoremstyle{plain}
\newtheorem{theorem}{Theorem}[section]
\newtheorem{corollary}{Corollary}[section]

\newtheorem{lemma}{Lemma}[section]
\newtheorem{proposition}{Proposition}[section]

\theoremstyle{definition}

\newtheorem{example}{Example}[section]
 
\theoremstyle{remark}
\newtheorem{question}{Question}[section]
\newtheorem{remark}{Remark}[section]

\numberwithin{equation}{section}

\begin{document}

\title[Mixed character sums]{Mixed character sums modulo prime powers}
\author{Todd Cochrane}
\address{ Department of Mathematics\\
          Kansas State University\\
          Manhattan, KS 66506}
\email{cochrane@math.ksu.edu}

\author{Andrew Granville}
\address{Department de Math\'ematiques et Statistique, Universit\'e de Montr\'eal\\
CP 6128  Centre-Ville\\ Montreal, QC H3C 3J7, Canada}
\email{andrew@dms.umontreal.ca}

\thanks{AG is partially supported by a grant from the the Natural Sciences and Engineering Research Council of Canada.}
   
\dedicatory{Dedicated to Roger Heath-Brown on the occasion of his 75th birthday}

\keywords{exponential sums, character sums} \subjclass{11L07;11L03}
\date{\today}

\begin{abstract}
We obtain explicit estimates for the mixed character sum $$S(\chi,g,f,p^m) = \sum_{x=1}^{p^m}
 \chi (g(x)) e_{p^m}(f(x)),
 $$
where $p^m$ is a prime power, $\chi$ is a
multiplicative character mod $p^m$ and $f,g$ are rational functions over $\mathbb Q$. Let $f=f_+/f_-$, $g=g_+/g_-$ in reduced form, and set $D=\deg(f)+Z-1$ where $Z$ is the number of distinct complex zeros of $f_-g_+g_-$, and $\Delta= \deg(f)+\deg(g)$ for polynomial $f,g$, $\Delta=2(\deg(f)+\deg(g))$ otherwise. We show for example that for odd $p$,  any non-degenerate sum has
\begin{equation} 
    |S(\chi,g,f,p^m)| \le \begin{cases}
3^{4/3}\, p^{m(1-\frac 1D)}, & \text{if $\deg_p(f) \ge 1$;}\\    
     3^{4/3}\, p^{m(1-\frac 1\Delta)}, & \text{if $\deg_p(g) \ge 1$.}
     \end{cases}
     \end{equation}
Analogous bounds are given for degenerate sums as well.

\end{abstract}
\maketitle

\begin{small}

\section{Introduction} \label{S:1}

Uniform bounds on exponential sums of the sort 
$$
\Big|\sum_{x=1}^{p^m} e_{p^m}(f(x))\Big| \ll_{\varepsilon,d} p^{m\left(1-\frac 1{d}+\varepsilon\right)},
$$
 with $f$ a polynomial of degree $d$, $p^m$ a prime power and $e_{p^m}(\cdot)$ the additive character mod $p^m$, 
$
e_{p^m}(x)=e^{\frac {2\pi ix}{p^m}},
$
date back to the work of Mordell \cite{mordell} and Hua \cite{hua1937,hua1938,hua1940}. Numerous extensions and refinements have been given since then.  
In this paper we obtain improved estimates  of this type for a general  mixed character sum, 
\begin{equation} \label{sum}
S(\chi,g,f,p^m) = \sum_{x=1}^{p^m}
 \chi (g(x)) e_{p^m}(f(x)),
\end{equation}
where $\chi$ is a
multiplicative character
mod ${p^m}$,
and $f$ and $g$ are rational functions over $\mathbb Q$,
\begin{equation} \label{fg}
f(x)=\frac {f_+(x)}{f_-(x)}, \quad \quad g(x)=\frac {g_+(x)}{g_-(x)},
\end{equation}
for some relatively prime polynomials $f_+, f_-$, and $g_+,g_-$ over $\mathbb Z$. The sum in \eqref{sum} is over values of $x$ for which 
$ 
p \nmid f_-(x)g_+(x)g_-(x).
$ 
For such $x$, $f(x):\equiv f_+(x)\overline{f_-(x)}$ mod $p^m$, 
$g(x):\equiv g_+(x)\overline{g_-(x)}$ mod $p^m$
 with the overline denoting multiplicative inverse mod $p^m$.

Let $\deg(f):=\max\{\deg(f_+),\ \deg(f_-)\}$, let   $\deg_p(f)$ be the degree of $f$ reduced mod $p$, and set
\begin{equation} \label{Ddef}
D:=  \deg(f) + \mathcal Z(f_-g_+g_-),
\end{equation}
where $\mathcal Z(h)$ denote the number of distinct complex zeros of any polynomial $h\in \mathbb Q[x]$.

For prime moduli (that is, when $m=1$)  ``best-possible''  results are known, stemming from the famous work of Weil \cite{weil}.
First note that there are  {\it degenerate} cases when  $\deg_p(f)=0$ and $g(x) \equiv bh(x)^r$ mod $p$ for
  some  $b\in \mathbb Q$ and  $h(x) \in \mathbb Q(x)$, where $r$ is the order of $\chi$, in which case  $\chi(g(x))e_{p}(f(x))$ is  constant when it is non-zero so there is no cancelation.  For a non-degenerate mod $p$ sum we do have significant cancelation:
\begin{equation} \label{weilub}
|S(\chi,g,f,p)| \le (D-1) \sqrt{p};
\end{equation}
and $D$ can be replaced by $D_p:=\deg_p(f)+\mathcal Z_p(f_-g_+g_-) \le D$, where $\mathcal Z_p$ denotes the number of distinct zeros over $\overline{\mathbb F}_p$.
 For the case of polynomials we refer the reader to the works of Schmidt
\cite{schmidt} and Stepanov \cite{stepanov} for elementary proofs of \eqref{weilub}, and
for rational functions, 
Bombieri \cite{bombieri}, Perelmuter \cite{perelmuter} and Cochrane and Pinner \cite{cp}. 

Our contribution in this paper comes in the cases where $m\geq 2$. Here  the sum $S(\chi,g,f,p^m)$ is  {\it degenerate} when
$\deg_p(f)=0$, and either $\deg_p(g)=0$ or $\chi$ is imprimitive, in which case
 $S(\chi,g,f,p^m)$ reduces to a multiple of a complete sum mod $p^{m-\ell}$ for some $\ell \ge 1$. 
Our main theorem is the following.

\begin{theorem}\label{maintheorem0} Suppose that $f(x),g(x)$ are rational functions over $\mathbb Q$, $p$ is a prime, $m$ a positive integer, and
$\chi$ is a multiplicative character mod ${p^m}$ such that $S(\chi,g,f,p^m)$ is non-degenerate.

 i) If $\deg_p(f) >0$, then for odd $p$,
\begin{equation} \label{uniformf}
|S(\chi,g,f,p^m)|\le 3\, \deg_p(f)^{\frac 1D}\, p^{m(1-\frac 1{D})}, 
\end{equation}
and the same for $p=2$, with $3$ replaced by $2^{\frac 53}$.

ii)
If $\chi$ is primitive and $\deg_p(g)>0$,  then
\begin{equation} \label{uniformg}
|S(\chi,g,f,p^m)|\le \max\{3\, \deg_p(g)^{\frac 1D},\, \deg_p(g)^{\frac 2D}\}\, p^{m(1-\frac 1D)},
\end{equation}
for odd $p$, and for $p=2$, $|S(\chi,g,f,2^m)| \le 2^{\frac 53} \deg_2(g)^{\frac 1D} 2^{m(1-\frac 1D)}$.
\end{theorem}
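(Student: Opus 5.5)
We will use the standard stationary phase (Cochrane--Zheng) reduction for sums modulo prime powers, followed by a bound on the number and multiplicity of critical points. The estimate will then be obtained by interpolating between a trivial bound and a nontrivial one.

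\textbf{Step 1: reduction to critical points.} First take $m\ge 2$, and write $m=2t$ or $2t+1$. Split $x=y+p^{m-t}z$, or $x = y + p^{t}z$ in the appropriate half. Expand $f(y+p^{m-t}z)\equiv f(y)+p^{m-t}f'(y)z \pmod{p^m}$, valid when $2(m-t)\ge m$ (the case $p=2$ needs care with $2$-adic denominators). For the character, use a primitive root $a$ and write $\chi(1+p^{m-t}u)=e_{p^{t}}(Ru)$ for some integer $R$; here $p\nmid R$ when $\chi$ is primitive. Summing over $z$ shows the sum is supported on $y$ in the critical set
\[
\mathcal A=\{y \bmod p: p\nmid f_-g_+g_-(y),\ r(y):=g_-g_+\,f'(y)\,+\,R\,(g'\!/g)\,\text{-numerator}\equiv 0 \bmod p\}.
\]
Let $r$ be this ``derivative polynomial''. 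This yields the bound
\[
|S(\chi,g,f,p^m)|\le p^{m-t}\cdot \#\{y \bmod p^{t}: r(y)\equiv 0\ (\text{suitable power of }p)\}.
\]
Iterating over the critical points $\alpha$ of multiplicity $\nu_\alpha$, one then gets a bound of the form
\[
|S|\le \sum_{\alpha\in\mathcal A}\nu_\alpha\,p^{m(1-\frac1{\nu_\alpha+1})}.
\]
We will take this from the earlier sections of the paper, which we assume contain this Cochrane--Zheng type result.

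\textbf{Step 2: counting multiplicities.} The key combinatorial step is to show $\sum_\alpha \nu_\alpha\le \deg_p(r)$. Clearing denominators, one gets a polynomial of degree at most $\deg(f)+\mathcal Z(f_-g_+g_-)-1=D-1$. This bound holds because the denominators of $f'$ and $g'/g$ only involve each distinct root of $f_-g_+g_-$ once more than in $f$. Non-degeneracy is what guarantees that $r\not\equiv 0 \bmod p$: if $\deg_p(f)>0$, then $f'$ has a nonzero leading term, or, in case ii), $R g'/g$ is nonzero mod $p$ since $p\nmid R$ and $\deg_p(g)>0$. With this, each $\nu_\alpha+1\le D$, and so $|S|\le (D-1)p^{m(1-1/D)}$ crudely.

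\textbf{Step 3: sharpening the constant.} To get $3\deg_p(f)^{1/D}$ in place of $D-1$, apply Hölder/convexity. The function $\nu\mapsto \nu p^{-m/(\nu+1)}$ summed under the constraint $\sum\nu_\alpha\le D-1$ is maximised, up to the stated constant, by few high-multiplicity points. The count of critical points is also at most $\deg_p(f)$ (respectively $\deg_p(g)$), since multiplicity $\nu$ of a root of $r$ forces contact with $f$ (or $g$) to order $\nu$. Combining the two gives the factor $\deg_p(f)^{1/D}$. The small cases $m=1$ (Weil, \eqref{weilub}) and $m=2,3$, where $t$ is tiny, need to be handled separately; this is where the constant $3$ (and $2^{5/3}$ for $p=2$, from the lost factor in the $2$-adic Taylor expansion) arises. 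Comparing $(D-1)\sqrt p$ with $3\,p^{1-1/D}$ handles $m=1$.

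\textbf{Main obstacle.} I expect the hardest part to be Step 3: proving that a root of multiplicity $\nu$ contributes only $\ll p^{m(1-1/(\nu+1))}$ uniformly in $m$, with explicit constant, in the recursive descent when the critical point lifts non-uniquely. My first attempt would be induction on $m$, at each stage replacing $f,g$ by $f(\alpha+py)$, $g(\alpha+py)$ and tracking how $\deg_p$ and the $p$-adic valuation of the coefficients drop.
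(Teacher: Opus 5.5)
Your Step 2 rests on a false claim: non-degeneracy does \emph{not} force $f'+c_\chi g'/g\not\equiv 0 \pmod p$.

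\begin{itemize}
\item For part (i), take $f(X)=X^p$ and $g=1$, so $\deg_p(f)=p>0$.
\item For part (ii), take $f=0$ and $g(X)=1+X^p$ with $\chi$ primitive, so $\deg_p(g)=p>0$.
\end{itemize}

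In both cases $f'+c_\chi g'/g\equiv 0 \pmod p$ identically. Your polynomial $r$ then vanishes mod $p$, and every residue class is ``critical'' with no finite multiplicity. So the bound $|S|\le\sum_\alpha \nu_\alpha p^{m(1-\frac 1{\nu_\alpha+1})}$ you intend to import is not available. The correct local bound (Theorem \ref{maintheorem}(iii)) is stated for the normalized function $\mathcal C=p^{-t}(f'+c_\chi g'/g)$, where $t=\text{ord}_p(f'+c_\chi g'/g)$. It carries an extra factor $p^{t/(\nu+1)}$ and requires $m\ge t+2$.

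The case $t\ge 1$ is not a corner case. It is precisely what the factors $\deg_p(f)^{1/D}$, $\deg_p(g)^{1/D}$ and $\deg_p(g)^{2/D}$ in the statement pay for. Your Step 3 account of these factors does not work, for two reasons:
\begin{itemize}
\item The number of critical points is not bounded by $\deg_p(f)$. For $f=X^2$, $\chi$ primitive and $g$ a polynomial with $Z$ simple zeros, $\mathcal C_+=2Xg+c_\chi g'$ has degree $Z+1=D-1$, which can be far larger than $2$.
\item A count of critical points would enter the bound multiplicatively anyway, not as a $1/D$-th power.
\end{itemize}
Moreover, a local bound with the factor $\nu_\alpha$ already gives $(D-1)p^{m(1-\frac 1D)}$ for a single critical point of multiplicity $D-1$. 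No convexity argument can bring that constant down to $3$; you need a local bound with an absolute constant.

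The missing key lemma is Proposition \ref{tprop} with Corollary \ref{tboundcor}. If $p^t\,\|\,(f'+c_\chi g'/g)$, then $f\equiv f_0^{p^t}\pmod p$, and when $\chi$ is primitive also $g\equiv g_0^{p^t}\pmod p$. Hence $p^t\le \deg_p(f)$ in case (i) and $p^t\le\deg_p(g)$ in case (ii). This is not a formality, since $t$ can exceed both $\text{ord}_p(f')$ and $\text{ord}_p(c_\chi g')$ (Remark \ref{errorremark}); the paper proves it by an induction using Lemmas \ref{Fieldlemma}, \ref{fprimefplemma} and \ref{DE}. With it, the proof splits in two ranges.
\begin{itemize}
\item For $2\le m\le t+1$, the trivial bound gives $p^m\le p^{(t+1)/D}p^{m(1-\frac 1D)}\le p^{2t/D}p^{m(1-\frac1D)}$. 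In case (ii) this is at most $\deg_p(g)^{2/D}p^{m(1-\frac 1D)}$. In case (i) it is absorbed into $3\deg_p(f)^{1/D}$ only because $\deg_p(f)\le D$, whereas $\deg_p(g)$ is not bounded by $D$. That is the origin of the $\max$ in (ii).
\item For $m\ge t+2$, three ingredients combine:
\begin{itemize}
\item the degree bound $\deg_p(\mathcal C_+)\le D-1$ (Lemma \ref{CDub});
\item the local bound $|S_\alpha|\le 1.75\,p^{t/(\nu+1)}p^{m(1-\frac 1{\nu+1})}$, obtained by converting $S_\alpha$ into a pure exponential sum (Proposition \ref{propconvert}) and applying Theorem \ref{mainthm1};
\item the simplex argument plus small-$d$ casework of Corollary \ref{maincor1}.
\end{itemize}
Together these give $3p^{t/D}p^{m(1-\frac1D)}\le 3\deg_p(f)^{1/D}p^{m(1-\frac 1D)}$ (resp.\ with $\deg_p(g)$).
\end{itemize}
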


 This is the first time we are aware of where the  upper bound has been stated in terms of the parameter $D$ instead of 
\begin{align} 
\Delta:&=\begin{cases} \deg(f)+\deg(g), & \text{for polynomial $f,g$;}\\
  2\deg(f)+2\deg(g),& \text{if $f$ or $g$ is non-polynomial,}
  \label{Deltadef}
  \end{cases}
 \end{align}
which potentially gives a saving of a power of $p$ as $D \le \Delta$, and is smaller in many cases.

     The exponent $m(1-\frac 1D)$ in \eqref{uniformf} and \eqref{uniformg} is best possible, which is well known for sums such as $\sum_{x=1}^{p^m}e_{p^m}(ax^d)$, where $D=d$ (see example \ref{Heilbronnex}, and for further extremal examples see  \cite{cz2}).

For a degenerate  sum that reduces to a multiple of a   complete sum of the type \eqref{sum} mod $p^{m-\ell}$, the estimate in Theorem \ref{maintheorem0}  must be increased by a factor of $p^{\frac {\ell}{D}}$, as explained in
 Theorem \ref{degentheorem} and Proposition \ref{degenprop}.

The following slightly cleaner bounds might be more useful in applications: 

\begin{corollary} \label{maincor} For any non-degenerate sum $S(\chi,g,f,p^m)$ we have for odd $p$,
\begin{equation} \label{uuniform} 
    |S(\chi,g,f,p^m)| \le \begin{cases}
3^{4/3}\, p^{m(1-\frac 1D)}, & \text{if $\deg_p(f) \ge 1$;}\\    
     3^{4/3}\, p^{m(1-\frac 1\Delta)}, & \text{if $\deg_p(g) \ge 1$;}
     \end{cases}
     \end{equation}
and the same for $p=2$ with $3^{4/3}$ replaced by $2^{5/3}3^{1/3}$.
\end{corollary}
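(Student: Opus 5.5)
The plan is to derive both parts of the corollary directly from Theorem \ref{maintheorem0}. The work consists of showing that the factors $3\deg_p(f)^{1/D}$ and $\max\{3\deg_p(g)^{1/D},\deg_p(g)^{2/D}\}$ can be absorbed into $3^{4/3}$, using the trivial bound $|S|\le p^m$ where needed. Two elementary facts will be used throughout.

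\emph{Fact 1: $D\le \Delta$.} Since $\mathcal Z(f_-g_+g_-)\le \deg f_-+\deg g_++\deg g_-$, we get the following.
\begin{itemize}
\item If $f,g$ are polynomials, then $f_-=g_-=1$, so $D\le \deg(f)+\deg(g)=\Delta$.
\item Otherwise $D\le \deg(f)+\deg(f)+2\deg(g)=\Delta$.
\end{itemize}
The argument is unchanged if $D$ carries the extra $-1$ of the abstract. As a consequence, $p^{m(1-\frac1D)}\le p^{m(1-\frac1\Delta)}$.

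\emph{Fact 2: $n^{1/n}\le 3^{1/3}$ for every positive integer $n$.}

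\emph{Case $\deg_p(f)\ge 1$.} Put $d=\deg_p(f)$. Then $d\le \deg(f)\le D$, so by Fact 2
\[
d^{1/D}\le D^{1/D}\le 3^{1/3}.
\]
Hence \eqref{uniformf} gives $|S|\le 3\cdot 3^{1/3}p^{m(1-\frac1D)}=3^{4/3}p^{m(1-\frac1D)}$. For $p=2$ the same step gives the constant $2^{5/3}3^{1/3}$. By Fact 1 this also settles the second line of \eqref{uuniform} whenever $\deg_p(f)\ge1$.

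\emph{Case $\deg_p(f)=0$ and $\deg_p(g)\ge 1$.} Non-degeneracy now forces $\chi$ to be primitive, since otherwise the sum would be degenerate. So part (ii) of Theorem \ref{maintheorem0} applies. Put $d=\deg_p(g)\le \deg(g)\le\Delta$.

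The difficulty here is that $D$ may be much smaller than $d$; for instance $g=x^k$ has $\mathcal Z=1$. So $d^{2/D}$ need not be bounded, and we must trade the exponent $1/D$ for $1/\Delta$. This is the main step, and I would handle it by splitting on the size of $p^m$.

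\begin{itemize}
\item If $p^m\le 3^{4\Delta/3}$, the trivial bound already suffices:
\[
|S|\le p^m=p^{m/\Delta}\,p^{m(1-\frac1\Delta)}\le 3^{4/3}p^{m(1-\frac1\Delta)}.
\]
\item If $p^m>3^{4\Delta/3}$, set $t=\frac1D-\frac1\Delta\ge0$. Then $p^{mt}>3^{\frac43(\frac\Delta D-1)}$, so it suffices to show
\[
\max\{3d^{1/D},\,d^{2/D}\}\le 3^{4\Delta/(3D)}.
\]
This is equivalent to the two inequalities $d^2\le 3^{4\Delta/3}$ and $d\le 3^{\frac43\Delta-D}$. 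Because $D\le\Delta$, the second follows from $d\le 3^{\Delta/3}$. Both then follow from $d\le\Delta\le 3^{\Delta/3}$, which is Fact 2 again.
\end{itemize}

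For $p=2$ the same dichotomy is used. The threshold becomes $p^m\le c^{\Delta}$ with $c=2^{5/3}3^{1/3}$, and one needs $2^{5/3}d^{1/D}\le c^{\Delta/D}$. This reduces to $d\le 2^{\frac53(\Delta-D)}3^{\Delta/3}$, which again holds since $d\le\Delta\le3^{\Delta/3}$. So the stated constant $2^{5/3}3^{1/3}$ follows.
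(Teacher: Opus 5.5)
Your proof is correct. For the case $\deg_p(f)\ge 1$ it is the same as the paper's. For the case $\deg_p(g)\ge 1$ you take a different route.

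\textbf{The paper's route.} The paper does not start from the final form of Theorem \ref{maintheorem0}(ii). It goes back to the parameter $t=t(\chi,g,f,p^m)$. From Corollary \ref{tboundcor} it takes $p^t\le \deg_p(g)\le \Delta$, then splits on $m$ versus $t$.
\begin{itemize}
\item For $2\le m\le t+1$ it uses the trivial bound with $p^m\le p^{2t}\le\Delta^2$, which gives the constant $\Delta^{2/\Delta}\le 3^{2/3}$.
\item For $m\ge t+2$ it uses Corollary \ref{m2} together with $m\ge t$ and $D\le\Delta$. This passes from $3p^{t/D}p^{m(1-\frac1D)}$ to $3p^{t/\Delta}p^{m(1-\frac1\Delta)}\le 3\Delta^{1/\Delta}p^{m(1-\frac1\Delta)}$.
\end{itemize}
The exponent change $D\to\Delta$ is made while the loss is still written as a power $p^{t}$, where it costs nothing. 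So the awkward factor $\deg_p(g)^{2/D}$, which can be huge when $D$ is small, never appears.

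\textbf{Your route.} You treat Theorem \ref{maintheorem0} as a black box. You absorb $\max\{3d^{1/D},d^{2/D}\}$ by splitting on $p^m$ versus $3^{4\Delta/3}$. This makes the corollary a formal consequence of the stated theorem plus $D\le\Delta$ and $\deg_p(g)\le\Delta$, at the price of a threshold argument.

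Your subdivision according to whether $\deg_p(f)\ge 1$ also handles one situation more cleanly than the paper: $\deg_p(f)\ge1$, $\deg_p(g)\ge1$ with $\chi$ imprimitive. There the paper's step $p^t\le\deg_p(g)$ is not justified, since Corollary \ref{tboundcor} only gives $p^t\mid c_\chi\deg_p(g)$. For example, take $g=X$, $f=X^{p^t}$ and $p^t\,\|\,c_\chi$. The paper's argument is rescued by $p^t\le\deg_p(f)\le\Delta$.

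\textbf{Cosmetic points.}
\begin{itemize}
\item When $D=\Delta$, your inequality $p^{mt}>3^{\frac43(\frac\Delta D-1)}$ should read $\ge$, which is all you use.
\item Rename your auxiliary $t=\frac1D-\frac1\Delta$, since $t$ already denotes $t(\chi,g,f,p^m)$.
\end{itemize}
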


This paper refines and extends the work of the first author \cite{cochrane2002} where similar but weaker bounds were established for the case of polynomials,  and corrects a couple errors that were made in that work; see Remarks \ref{errorremark} and \ref{errorremark2}.  The case of degenerate sums was not addressed in \cite{cochrane2002}.  
The motivation for making these improvements was the application of these bounds to incomplete mixed character sums in the authors' recent work \cite{cgz}. The work here builds upon earlier work by many authors; see \cite{czpuremix, cz2, czsurvey, cochrane2002, cochrane2024} for more background and references.

\section{notation and background}
  We follow the convention of using upper case $X$ if $f(X)$ is to be viewed as a formal object in an indeterminate $X$, and lower case $x$ if $f(x)$ is to be  viewed as a function.
  
For any rational function $h(X)$ over $\mathbb Q$, $h_+(X)$ and $h_-(X)$ are defined to be relatively prime integer polynomials such that $h(X)=h_+(X)/h_-(X)$.

 We say that $\chi(g(x))e_{p^m}(f(x))$ is {\it constant on its domain} if it is constant on the set of integers $x$ for which $
p \nmid f_-(x)g_+(x)g_-(x)
$.

Let $\text{ord}_p(x)$ denote the $p$-adic valuation on $\mathbb Z$, $\text{ord}_p(x)=k$  if $p^k\|x$; $\text{ord}_p(0):=\infty$.  The valuation extends to the ring of $p$-adic integers $\mathbb Z_p$ and $p$-adic field $\mathbb Q_p$, and to the field of rational functions over $\mathbb Q$. Thus for a polynomial $f$ over $\mathbb Z$, 
$\text{ord}_p(f)$ is the largest power of $p$ dividing all of the coefficients
of $f$, and for a rational function $f=f_+/f_-$ ,  $\text{ord}_p(f) =
\text{ord}_p(f_+)- \text{ord}_p(f_-)$. We write $p^k|f(X)$ if $\text{ord}_p(f) \ge k$ and  $p^k\|f(X)$ if $\text{ord}_p(f)=k$.

The sum in \eqref{sum} is empty if $p|f_-(X)$ or $p|g_+(X)g_-(X)$, and so we always assume that $p \nmid f_-(X)g_+(X)g_-(X)$, that is $\text{ord}_p(f) \ge 0$ and $\text{ord}_p(g) = 0$.

 Assume now that $p$ is odd; see Section \ref{p2section} for $p=2$.
Let
$\omega$ be a primitive root mod ${p^2}$ and $r$ the integer defined by
$\omega^{p-1}=1+rp$.
In particular, $p \nmid r$ and
 $\omega$ is a primitive root mod ${p^m}$ for any exponent
$m$.
For any
multiplicative character $\chi$ mod ${p^m}$ there is a unique integer $c$
 with $1 \le c \le p^{m-1}(p-1)$ such that for any $k \in \mathbb Z$,
\begin{equation}  \label{c}
\chi(\omega^k)=e^{\frac {2\pi i ck}{p^{m-1}(p-1)}}.
\end{equation}
Define
\begin{equation}\label{tchidef}
t_\chi:=\text{ord}_p(c),
\end{equation}
so that $p^{m-t_\chi}$ is the conductor of $\chi$, and $\chi$ is primitive if and only if $t_\chi=0$. For the principal character $\chi_0$ we have  $c=p^{m-1}(p-1)$.
If $\omega$ is replaced by  another primitive root, say $\omega^l$ with $(l,p(p-1))=1$, then $c$ is replaced by $c'\equiv cl$ mod $p^{m-1}(p-1)$, and $t_\chi$ remains the same.

In this paragraph we focus on the set of integers congruent to 1 mod $p$. The $p$-adic logarithm  is defined on this set  by
$$  
\log(1+py) =  \sum_{n=1}^\infty (-1)^{n-1}\frac {(py)^n}n.
$$
The series converges for any $y \in  \mathbb Z_p$ since $n-\text{ord}_p(n) \to \infty$ as $n \to \infty$. Also, if $y \equiv y'$ mod $p^{m-1}$ then $\log(1+py) \equiv \log(1+py')$ mod $p^m$.

 Let $R$
denote the $p$-adic
integer
$R:= p^{-1}\log(1+rp)=p^{-1} \log(\omega^{p-1})$,
and $\overline R$ its multiplicative inverse in $\mathbb Z_p$ so that $R\overline R \equiv 1$ mod $p^m$ for any positive integer $m$. 
If $\omega$ is replaced by $\omega^l$ then  $R$ is replaced by $R'=Rl = p^{-1}\log (\omega^{l(p-1)})$, and we see that $\overline Rc\equiv \overline R'c'$ mod $p^{m-1}(p-1)$. Thus  $\overline Rc$ is an invariant associated with $\chi$ and we define
\begin{equation} \label{cchidef}
c_\chi:\equiv \overline Rc \pmod {p^{m-1}}, \quad \quad 1 \le c_\chi \le p^{m-1}.
\end{equation}
 For any integers $j,y$ with $1+py \equiv \omega^{j(p-1)}$ mod $p^m$  we have  $\log(1+py) \equiv j\log(\omega^{p-1})\equiv pRj$ mod $p^m$ and 
\begin{equation} \label{plog}
\chi(1+py)= e_{p^{m-1}}(cj)=e_{p^m}(c_\chi\log(1+py)).
\end{equation}
Note, the value of the exponential just depends on the residue class of $c_\chi$ mod $p^{m-1}$.

For any rational functions $f,g$ over $\mathbb Q$, not both constant polynomials, $g \neq 0$,  and multiplicative character $\chi$ mod $p^m$,  we define
\begin{align} 
t&=t(\chi,g,f,p^m):= \text{ord}_p\Big(f'(X) +c_\chi \frac {g'(X)}{g(X)}\Big)\label{t},
\end{align}
and the {\it critical point function}
\begin{align}
\mathcal C(X)&:= p^{-t}\Big(f'(X) +c_\chi\frac {g'(X)}{g(X)}\Big). \label{cpfunction}
 \end{align}
By Lemma \ref{Fieldlemma}, $f'+c_\chi g'/g$ is not identically zero, and so $t$ is well defined.

\section{Degenerate sums}
Suppose that $m \ge 2$ and that the sum $S(\chi,g,f,p^m)$ is degenerate, that is $\deg_p(f)=0$ and either $\deg_p(g)=0$ or $\chi$ is imprimitive.   
Write
\begin{equation} \label{FGintro}
f(X)=f(0)+p^{\ell_f}F(X), \quad \quad g(X)=g(0)(1+p^{\ell_g}G(X)),
\end{equation}
for some non-negative integers $\ell_f$, $\ell_g$ and rational functions $F,G$ with $p \nmid FG$. We may assume that $p \nmid g(0)$,  $\deg_p(F) \ge 1$ and $\deg_p(G) \ge 1$ (see Section \ref{degensection}).   
For a degenerate sum we have both $\ell_f >0$ and $\ell_g+t_\chi>0$. 
If $\ell_f$ and $\ell_g$ are both positive, we define $H(X)$ to be a rational function with
\begin{equation} \label{Hdef}
H(X):\equiv p^{\ell_f}F(X) + c_\chi \log(1+p^{\ell_g}G(X)) \pmod {p^m},
\end{equation} 
where $\log$ denotes the $p$-adic logarithm.

 Set $\ell=m$ if $\ell_g=0$,  $\ell_f\ge m$, $t_\chi=m-1$ and $g(X)\equiv bh(X)^r$ mod $p$ for some rational function $h(X)$, where $r$ is the order of $\chi$, as well as when $\ell_g>0$ and $\text{ord}_p(H) \ge m$. 
 In these cases,   $\chi(g(x))e_{p^m}(f(x))$ is constant on its domain (see \eqref{chiH}), that is, $S(\chi,g,f,p^m)$ degenerates to a mod 1 sum and no nontrivial estimate is available.   Otherwise, set
\begin{equation} \label{eldef}
\ell=\ell(\chi,g,f,p^m) := \begin{cases} \min\{\ell_f, t_\chi\}, & \text{if $\ell_f>0$, $\ell_g=0$};\\
\text{ord}_p(H(X)), & \text{if $\ell_f>0$, $\ell_g>0$.}
\end{cases}
\end{equation}
In these cases, $1 \le \ell \le m-1$ (for degenerate sums).
Note also that if $\ell_f \neq \ell_g+t_\chi$, then $\ell = \min\{\ell_f, \ell_g+t_\chi\}$ (in both cases).

\begin{theorem}\label{degentheorem} 

Let $f,g$ be rational functions over $\mathbb Q$, not both constants, $p^m$ a prime power, $\chi$ a  multiplicative character  mod $p^m$,  $\ell, t$  as defined  in \eqref{eldef}, \eqref{t} and $\Delta$ as defined in \eqref{Deltadef}. 

i) If $\ell_g=0$ then $S(\chi,g,f,p^m)$ degenerates to a  mixed exponential sum mod $p^{m-\ell}$ and 
\begin{equation} 
|S(\chi,g,f,p^m)|  \le 3^{\frac 43} p^{\frac {\ell}\Delta} p^{m(1-\frac 1{\Delta})},
\end{equation}  
with $3^{\frac 43}$ replaced by $2^{\frac 53}3^{\frac 13}$ for $p=2$.

ii) If $\ell_g>0$ then  $S(\chi,g,f,p^m)$ degenerates to a  pure exponential sum mod $p^{m-\ell}$, and with $d_p:=\deg_p(p^{-\ell} H)$,
\begin{equation} \label{degenublg1}
|S(\chi,g,f,p^m)|  \le \max\{3 \, d_p^{\frac 1\Delta}, d_p^{\frac 2{\Delta}}\} p^{\frac {\ell}\Delta} p^{m(1-\frac 1{\Delta})}.
\end{equation} 
   \end{theorem}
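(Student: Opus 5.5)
The strategy is to reduce the degenerate sum to a complete sum modulo $p^{m-\ell}$ and then apply the non-degenerate bounds (Theorem \ref{maintheorem0} and Corollary \ref{maincor}) to that reduced sum. The factor $p^{\ell/\Delta}$ in the statement comes from the arithmetic identity
$$p^\ell\, p^{(m-\ell)(1-\frac1\Delta)} = p^{\frac{\ell}{\Delta}}\, p^{m(1-\frac1\Delta)}.$$

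For part i), where $\ell_g=0$, we have $\deg_p(g)\ge1$ and $\chi$ is imprimitive with conductor $p^{m-t_\chi}$. I would choose $\ell=\min\{\ell_f,t_\chi\}$, so that $\chi=\chi'\circ\pi$ for a character $\chi'$ mod $p^{m-\ell}$, where $\pi$ is reduction modulo $p^{m-\ell}$. Since $e_{p^m}(f(x))=e_{p^m}(f(0))\,e_{p^{m-\ell}}(p^{\ell_f-\ell}F(x))$, the summand $\chi(g(x))e_{p^m}(f(x))$ depends only on $x$ mod $p^{m-\ell}$. Hence
$$S(\chi,g,f,p^m)=e_{p^m}(f(0))\,p^{\ell}\,S(\chi',g,p^{\ell_f-\ell}F,p^{m-\ell}).$$
I then need to check that the reduced sum is non-degenerate. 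If $\ell=\ell_f$, then $\deg_p(F)\ge1$ and the additive part is non-degenerate. If $\ell=t_\chi<\ell_f$, then $\chi'$ is primitive and $\deg_p(g)\ge1$. Applying Corollary \ref{maincor} with modulus $p^{m-\ell}$ gives $3^{4/3}p^{(m-\ell)(1-1/\Delta)}$ in the $g$-case. In the $f$-case it gives $3^{4/3}p^{(m-\ell)(1-1/D)}$, and I would use $D\le\Delta$ to pass to $\Delta$. This needs $D$ for the pair $(g,F)$ to be at most $\Delta$ for $(g,f)$, which holds because $\deg F\le\deg f$ and zeros are not created. Multiplying by $p^\ell$ finishes part i). The edge case $\ell=m$ is trivial, since the bound is then at least $p^m$.

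For part ii), where $\ell_g>0$, I would write $g(x)=g(0)(1+p^{\ell_g}G(x))$ and use \eqref{plog}:
$$\chi(g(x))=\chi(g(0))\,e_{p^m}\big(c_\chi\log(1+p^{\ell_g}G(x))\big).$$
The summand is therefore $\chi(g(0))e_{p^m}(f(0))\,e_{p^m}(H(x))$, a pure exponential sum. Writing $H=p^\ell H_1$ with $\ell=\text{ord}_p(H)$ gives $e_{p^{m-\ell}}(H_1(x))$. One must check that $H_1(x)$ mod $p^{m-\ell}$ depends only on $x$ mod $p^{m-\ell}$; this follows since $\log(1+p^{\ell_g}y)$ mod $p^m$ depends on $y$ mod $p^{m-\ell_g}$ and $\ell\le\ell_g+t_\chi$, or one can argue directly. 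The sum is then $p^\ell$ times a pure sum with $\deg_p(H_1)=d_p\ge1$, so it is non-degenerate.

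Applying Theorem \ref{maintheorem0}(i) with $g=1$ and $f=H_1$ gives $3\,d_p^{1/D'}p^{(m-\ell)(1-1/D')}$. The main obstacle is that $H$ is defined only modulo $p^m$ via a truncated logarithm series, so its degree and pole structure are not a priori controlled by $\Delta$. I would first try replacing $H_1$ by an exact rational function congruent to it modulo $p^{m-\ell}$ whose poles lie among the zeros of $f_-g_+g_-$. The truncated series $\sum_{n}(-1)^{n-1}p^{n\ell_g}G^n/n$ has poles only at the poles of $G$, though of high order. The delicate point is bounding $D'$ by $\Delta$, since the high-order poles may break that. If they do, I would fall back on bounding the sum through the critical point method (counting solutions of $H_1'\equiv0$ mod $p$, a congruence whose numerator has degree at most $\Delta$). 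This is likely the source of the $\max\{3d_p^{1/\Delta},d_p^{2/\Delta}\}$ form, mirroring \eqref{uniformg}. Finally, multiplying by $p^\ell$ gives \eqref{degenublg1}.
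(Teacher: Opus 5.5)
Your part (i) is essentially the paper's argument: reduce with $\ell=\min\{\ell_f,t_\chi\}$, apply Corollary \ref{maincor} to the sum mod $p^{m-\ell}$, and multiply by $p^{\ell}$. Two small points. First, when $\ell=t_\chi=m-1$ the reduced mod $p$ sum is degenerate exactly when $g\equiv bh^r \pmod p$. That is the case the definition of $\ell$ sends to $\ell=m$, so your edge-case remark should say so explicitly. Second, in part (ii) the claim $\ell\le \ell_g+t_\chi$ is false when $\ell_f=\ell_g+t_\chi$, since cancellation can raise $\ell$, even up to $m$. You do not need it: $p^{-\ell}H$ has $p$-integral coefficients, so it is automatically periodic mod $p^{m-\ell}$.

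The genuine gap is the bound in part (ii). Your first route, applying Theorem \ref{maintheorem0}(i) to $H_1=p^{-\ell}H$, fails. Its parameter $D'$ involves $\deg H$, which is of size $\deg F+J\deg G$ with $J$ growing like $m/\ell_g$, so it is not controlled by $\Delta$. A low-degree representative does not exist in general. For example, take $g=1+pX$, $f$ constant, $\chi$ primitive, so $\Delta=1$. Then $H_1\equiv c_\chi p^{-1}\log(1+pX)$ has second difference of exact order $p$, so for $m\ge 3$ it is not congruent mod $p^{m-1}$ to a linear polynomial, and any non-polynomial representative already has $D'\ge 2$.

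Your fallback points the right way but is not carried out, and the three ingredients that actually give \eqref{degenublg1} are missing.

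(a) No degree information on $H_1$ is needed. Apply Corollary \ref{m2}, which holds for degenerate sums, to the original sum; equivalently, the reduced sum has the same critical point function $\mathcal C$, with $t$ replaced by $t-\ell$. Lemma \ref{CDub} gives $\deg_p(\mathcal C_+)\le D-1\le\Delta-1$. Your count ``at most $\Delta$'' would only yield exponent $1/(\Delta+1)$. For $m\ge t+2$ this gives $|S|\le 3p^{t/\Delta}p^{m(1-1/\Delta)}$.

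(b) The factor $d_p^{1/\Delta}$ comes from the inequality $p^{t-\ell}\le d_p$. This is Corollary \ref{tboundcor} applied to $p^{-\ell}H$, using $(p^{-\ell}H)'\equiv p^{t-\ell}\mathcal C$, and it turns $p^{t/\Delta}$ into $p^{\ell/\Delta}d_p^{1/\Delta}$.

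(c) The range $m\le t+1$, where no critical point bound is available, must be treated separately.
For $m\le t$ the bound is trivial by (b).
For $m=t+1$ and $p\le d_p$, the trivial bound gives $p^{(t+1)/\Delta}\le p^{\ell/\Delta}(pd_p)^{1/\Delta}\le p^{\ell/\Delta}d_p^{2/\Delta}$. This, not the critical point method, is where the $d_p^{2/\Delta}$ term comes from, just as in \eqref{uniformg}.
For $m=t+1$ and $p>d_p$, one has $\ell=t$ and $d_p\le D\le\Delta$ by Lemma \ref{dpHD}. Then either the trivial bound (when $p\le 3^{\Delta}d_p$) or the Weil bound for $S(p^{-\ell}H,p)$ finishes.

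Without (a)--(c), part (ii) is not proved.
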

\noindent We show, equation \eqref{dpHub}, that $d_p \le \deg(F)+ \tfrac {\ell-t_\chi}{\ell_g} \tfrac p{p-1} \deg(G)$, but this can likely be improved.  
  
For degenerate sums, the value of $\chi(g(x))e_{p^m}(f(x))$ depends only on the residue class of $x$ mod $p^{m-\ell}$. 
This can occur as well for certain non-degenerate sums as the following example illustrates.

\begin{example} \label{Heilbronnex} Consider $\sum_{x=1}^{p^m} e_{p^m}(x^{p^t})$ with $1 \le t <m$.  The value of $e_{p^m}(x^{p^t})$ depends only on the residue class of $x$ mod $p^{m-t}$, and so $\sum_{x=1}^{p^m} e_{p^m}(x^{p^t})=p^t\sum_{x=1}^{p^{m-t}} e_{p^m}(x^{p^t})$, but the latter sum is not a complete  exponential sum.  This sum is  non-degenerate (by our definition) but the estimate in Theorem \ref{maintheorem0} is only nontrivial for $m$ very large relative to $t$, specifically $p^m>p^t 3^{p^t}$. By repeated application of  Proposition \ref{propconvert} one can  obtain a more precise estimate. For $m \not\equiv 1$ mod $d$, one can show that 
$
\Big|\sum_{x=1}^{p^m} e_{p^m}(x^{p^t})\Big|= p^{m-\lceil m/d\rceil};
$
see \cite[Example 9.1]{czpuremix}. On the other hand, 
if $m \equiv 1$ mod $d$, on the final application one is left with a  generalized Heilbronn sum of the sort $\sum_{x=1}^{p^m}e_{p^m}(x^{p^{m-1}})$. Nontrivial bounds are known for such sums, but
this paper has nothing to say about them. 
\end{example}

 The following are examples of the two cases where $\ell=m$. 
   
 \begin{example}
i) Let $\chi$ be a mod $p^m$ character of conductor $p$ and order $r$. Then for any $f,g$ of the sort $g(X)=G(X)^r$, $f(X)=p^mF(X)$, with $G,F$ arbitrary rational functions, we have $\ell=m$ and  $\chi(g(x))e_{p^m}(f(x))=1$  on its domain. 
 
ii) Let $g(X)=1+p\, G(X)$, with $G(X)$ any rational function, $\chi$ any character mod $p^m$  and $f(X)$ any rational function with $f(X) \equiv -c_\chi \log(1+p\, G(X))$ mod $p^m$. Then $\ell=m$ and by \eqref{plog},  $\chi(g(x))e_{p^m}(f(x))= 1$ on its domain.
\end{example}

For certain Laurent polynomials we can refine the bound in Theorem \ref{degentheorem} $(ii)$. 
  
\begin{theorem} \label{Laurenttheorem} Let $f,g$ be rational functions as given in  \eqref{FGintro} such that $F$ and $G$ are Laurent polynomials, say $G(X)=\sum_{j=d_1}^{d_2}a_jX^j$. Let $p^m$ be an  odd prime power.  If $|d_2|\ge |d_1|$ assume $p \nmid a_{d_2}$. If $|d_2|<|d_1|$ assume $p \nmid a_{d_1}$. The coefficients of $F$ are arbitrary.
 Then for any  character $\chi$ mod $p^m$ we have
\begin{equation} \label{degenubLaur}
|S(\chi,g,f,p^m)| \le 6\ p^{\frac {\ell}\Delta} p^{m(1-\frac 1\Delta)},
\end{equation}
with $6$ improved to $3^{4/3}$ for non-polynomial $f,g$. 
If $p=2$ the same holds with $6$ replaced by  $2^{\frac 83}$, and improvement to $2^{\frac 53}3^{\frac 13}$ for non-polynomial $f$ or $g$.
\end{theorem}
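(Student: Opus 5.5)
Our plan is to reduce to Theorem \ref{degentheorem} and remove the factor $\max\{3d_p^{1/\Delta},d_p^{2/\Delta}\}$ by bounding $d_p$ directly for Laurent polynomials. First, if $\ell_g=0$, Theorem \ref{degentheorem}(i) already gives the constant $3^{4/3}\le 6$ (resp.\ $2^{5/3}3^{1/3}\le 2^{8/3}$). So assume $\ell_g>0$. In this case $S(\chi,g,f,p^m)$ degenerates to a pure exponential sum mod $p^{m-\ell}$ with phase $p^{-\ell}H$. Since $\ell_f>0$, we also have $\ell\ge 1$.

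The key step is to show that $p^{-\ell}H$ is congruent mod $p^{m-\ell}$ to a Laurent polynomial whose reduction mod $p$ has degree range controlled by $[d_1,d_2]$ and $\deg F$. Concretely, we expand $c_\chi\log(1+p^{\ell_g}G)=c_\chi\sum_{n\ge1}(-1)^{n-1}p^{n\ell_g}G^n/n$. The term $n$ has valuation at least $n\ell_g-\mathrm{ord}_p(n)+t_\chi$. Terms with valuation $\ge m$ are discarded, and the remaining terms are Laurent polynomials in $X$ with exponents in $[nd_1,nd_2]$. The hypothesis $p\nmid a_{d_2}$ (or $a_{d_1}$) is used to control the extreme monomial. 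Suppose $n_0$ is the largest $n$ whose term survives modulo $p^{\ell+1}$. The dominant exponent is then $n_0d_2$, with coefficient a $p$-adic unit times $a_{d_2}^{n_0}$, so it cannot cancel against lower terms or against $F$ unless $F$ itself reaches that degree. Thus $d_p$ is really the (Laurent) degree of an explicit polynomial, and $p^{-\ell}H$ is a Laurent polynomial, which is non-polynomial exactly when $f$ or $g$ is.

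Next, we apply Theorem \ref{maintheorem0}(i) to the mod $p^{m-\ell}$ sum of $e_{p^{m-\ell}}(p^{-\ell}H(x))$. For a Laurent polynomial, the parameter $D$ equals $\deg(p^{-\ell}H)+1$ (zeros of $X^k$ only at $0$), or just the degree in the polynomial case. The main obstacle is the inequality $D\le\Delta$ while keeping the factor $\deg_p^{1/D}$ bounded by an absolute constant. The degree of $H$ may be as large as $n_0\max(|d_1|,|d_2|)$ with $n_0$ up to roughly $(\ell-t_\chi)/\ell_g\cdot p/(p-1)$, which exceeds $\Delta$. So Theorem \ref{maintheorem0} cannot be applied naively.

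To handle this, we will instead argue directly with the critical point method behind Theorem \ref{maintheorem0}. The relevant derivative is $p^{-\ell}H'=p^{-\ell}\bigl(f'+c_\chi g'/g\bigr)$. Its numerator $(f'g+c_\chi g')$ times a power of $X$ has degree at most about $\deg f+\deg g$, or twice that for Laurent cases. The critical point count is therefore bounded in terms of $\Delta$ rather than $\deg H$, using that $g'/g$ is a rational function of low degree whereas $\log g$ is not. The number of critical points mod $p$ is at most $\Delta-1$ (or $\Delta/2$-type quantities), and the multiplicity bound gives exponent $1-1/\Delta$. The constants $6=2\cdot 3$ and $2^{8/3}$ come from the resulting factors $\Delta^{1/\Delta}\le 3^{1/3}\cdot$ small, times $3$ (resp.\ $2^{5/3}$). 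The bulk of the work will be checking that the unit leading coefficient keeps the critical point polynomial mod $p$ of exactly the expected degree.
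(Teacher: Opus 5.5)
There is a genuine gap. Your switch to the critical point method does not remove the dependence on $d_p=\deg_p(p^{-\ell}H)$; it only hides it.

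Apply it either to the original sum or to the pure sum mod $p^{m-\ell}$, which has the same critical point function $\mathcal C$ and $t$-value $t-\ell$. Corollary \ref{maincor1}, together with $\deg\mathcal C_+\le D-1\le\Delta-1$, then gives at best $|S|\le 3\,p^{(t-\ell)/\Delta}\,p^{\ell/\Delta}p^{m(1-1/\Delta)}$ for $m\ge t+2$. For $m\le t+1$ you only have the trivial bound $p^{(t+1)/\Delta}p^{m(1-1/\Delta)}$. Counting critical points fixes the exponent $1-1/\Delta$, but it says nothing about the factor $p^{(t-\ell)/\Delta}$, which you must show is bounded by an absolute constant. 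The control on $t-\ell$ that is available is $p^{t-\ell}\le d_p$ (Lemma \ref{dpHD}(i), via Corollary \ref{tboundcor}). So you are back to needing something like $d_p\le 2\Delta$, which is the very bound you declared out of reach.

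Also, $\deg\mathcal C_+\le D-1$ is Lemma \ref{CDub} and needs no hypothesis on $a_{d_2}$, so that is not where the unit-coefficient hypothesis enters. The paper's proof has exactly this structure. Proposition \ref{degenprop}(ii), with $\Delta$ in place of $D$, gives $|S|\le\max\{3d_p^{1/\Delta},d_p^{2/\Delta}\}\,p^{\ell/\Delta}p^{m(1-1/\Delta)}$, and the theorem reduces to an upper bound on $d_p$.

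The missing idea is that your worry, ``$\deg H$ may be as large as $n_0\max(|d_1|,|d_2|)$, which exceeds $\Delta$'', is unfounded. The reason is that $\Delta$ contains $\deg f=\deg F$, and $F$ is forced to have large degree. Your key-step paragraph uses the right tool, the unit extreme coefficient, but at the wrong level: instead of looking at terms that survive modulo $p^{\ell+1}$, exploit $p^\ell\mid H$.

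The easy cases are quick. If $\ell_f\ne\ell_g+t_\chi$, then $d_p$ equals $\deg_pF$ or $\deg_pG$. If $\ell=\ell_f$, then $L=1$ and $d_p\le\deg F+\deg G$.

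In the remaining case $\ell_f=\ell_g+t_\chi<\ell$, the condition $p^\ell\mid H$ says
$F\equiv -p^{-t_\chi}c_\chi\sum_{j=1}^{L}\frac{(-1)^{j-1}}{j}p^{(j-1)\ell_g}G^j\pmod{p^{\ell-\ell_f}}$, with $L$ as in \eqref{Ldef}. Since $p\nmid a_{d_2}$ (symmetrically $a_{d_1}$ when $|d_1|>|d_2|$), the monomial $X^{Ld_2}$ coming from $G^L$ has coefficient of valuation $(L-1)\ell_g-\text{ord}_p(L)\le\ell-\ell_f$. It can only be cancelled by the error term. If it is cancelled, pass to $X^{(L-1)d_2}$, or to $X^{(L-2)d_2}$ when $p\mid L$ and $\ell_g=\text{ord}_p(L)$.

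Hence $\deg F\ge(L-2)\deg G$. Combined with $d_p\le\deg F+L\deg G$, read off from \eqref{plHX}, this yields $d_p\le 2\deg F+2\deg G\le\Delta$ in the non-polynomial case and $d_p\le\deg F+2\deg G\le 2\Delta$ for polynomials. The constants then follow: $\max\{3\Delta^{1/\Delta},\Delta^{2/\Delta}\}\le 3^{4/3}$ in the first case and $\max\{3(2\Delta)^{1/\Delta},(2\Delta)^{2/\Delta}\}\le 6$ in the second.
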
 

\begin{remark} The estimate in \eqref{degenubLaur} holds as well in the following three degenerate cases: If $\ell_g=0$; if $\ell_f \neq \ell_g+t_\chi$; if $p>\deg_p(p^{-\ell}H)$.
\end{remark} 

\begin{question} Does a bound of the sort \eqref{degenubLaur} (with some absolute constant) hold for any degenerate sum with $\ell_g>0$?
\end{question}

\section{Background on Prime moduli}

We need the following consequence of the Weil bound \eqref{weilub}:
\begin{equation} \label{weilub2}
|S(\chi,g,f,p)| \le 1.75\, p^{1-\frac 1D},
\end{equation}
valid for any non-degenerate sum.
Indeed, if $p^{\frac 1D}\le 1.75$, then trivially, $|S(\chi,g,f,p)|\le p< 1.75\, p^{1-\frac 1D}$. 
If $p^{\frac 1D}>1.75$, then by the Weil bound,
$$
|S(\chi,g,f,p)| \le (D-1)\ \sqrt{p}< 1.75\  p^{1-\frac 1D},
$$
since $(D-1)^{2/D} \le 4^{\frac 25}= 1.741...$ for $D \ge 1$. The bound in \eqref{weilub2} establishes Theorem \ref{maintheorem0} (with a stronger constant) for the case of prime moduli.

\section{Main results on Prime power moduli}
We restrict our attention to odd $p$ in this section and take up the
case $p=2$ in Section \ref{p2section}. 
Let $f,g$ be as given in \eqref{fg}, $\chi$ a multiplicative character mod $p^m$, and $t=t(\chi,g,f,p^m)$ and $\mathcal C(X)$ be as given in \eqref{t}, \eqref{cpfunction}.  
  We define the set of {\it critical points} $\Cal A \subseteq \mathbb F_p$
associated with the sum $S(\chi,g,f,p^m)$ to be the set of solutions to the {\it
critical point congruence},
\begin{equation} \label{cpc}
 \Cal C(x)= p^{-t}\Big(f'(x) +c_\chi\, \frac {g'(x)}{g(x)}\Big)
\equiv 0 \pmod p.
\end{equation}
 Thus,
\begin{equation} \label{Adef}
\Cal A:= \{\alpha \in \mathbb F_p: \Cal C(\alpha ) \equiv 0 \pmod p  \}.
\end{equation}
 An integer is called a critical point if its residue class mod $p$ is one. A critical point is said to have multiplicity $\nu$ if it is a zero of \eqref{cpc} of multiplicity $\nu$.  Write $\mathcal C(X) = \frac {\mathcal C_+(X)}{\mathcal C_-(X)}$ in reduced form with $\mathcal C_+, \mathcal C_-$ integer polynomials.

Write $S(\chi,g,f,p^m)= \sum_{\alpha =1}^{p} S_\alpha$ with
\begin{equation} \label{salph}
S_\alpha = S_\alpha(\chi,g,f,p^m):= \sum_{\substack{x=1\\x\equiv \alpha \pmod
p}}^{p^m} \chi (g(x)) e_{p^m}(f(x)).
\end{equation}
Our main results for prime power moduli are the following.

\begin{theorem} \label{maintheorem} Let $f,g$ be rational functions over $\mathbb
Z$, not both constants, $p$ be an odd prime, and $\chi$ a multiplicative
character mod ${p^m}$ with
 $m \ge t +2$. 

\noindent
i) If $\alpha \notin \Cal A$,  then $S_\alpha(\chi,g,f,p^m)=0$.

\noindent
ii) If $\alpha$ is a critical point of multiplicity one, then $|S_\alpha(\chi,g,f,p^m)|=p^{\frac {m+t}2}$.

\noindent
iii) If $\alpha$ is a critical point of multiplicity $\nu \ge 1$, then
\begin{equation} \label{mainlocalub}
|S_\alpha(\chi,g,f,p^m)| \le 1.75\, p^{\frac t{\nu+1}}p^{m(1-\frac 1{\nu+1})}.
\end{equation}
\end{theorem}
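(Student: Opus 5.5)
The plan is the standard $p$-adic stationary phase argument: split $x = \alpha + p^{j}y$ and expand the phase to first or second order in $y$. Throughout write $\phi(x)$ for the phase, so that $\chi(g(x))e_{p^m}(f(x)) = e_{p^m}(\phi(x))$ on the relevant residue classes.

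\emph{Expansion.} Fix $x_0$ in the class of $\alpha$ with $p\nmid f_-(x_0)g_+(x_0)g_-(x_0)$. For $x = x_0 + p^{j}y$ with $j\ge 1$, write
\begin{equation*}
g(x_0+p^jy) = g(x_0)\bigl(1+p^j y\, g'(x_0)/g(x_0)+\cdots\bigr).
\end{equation*}
Applying \eqref{plog} to the bracket gives
\begin{equation*}
\chi(g(x)) = \chi(g(x_0))\, e_{p^m}\bigl(c_\chi \log(g(x)/g(x_0))\bigr),
\end{equation*}
so the summand equals $e_{p^m}(\phi(x_0) + \Phi(p^jy))$. The $p$-adic Taylor expansion of $\Phi$ has linear coefficient $p^j\bigl(f'(x_0)+c_\chi g'(x_0)/g(x_0)\bigr) = p^{j+t}\,\mathcal C(x_0)$. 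The higher coefficients are controlled as follows:
\begin{itemize}
\item the terms $f^{(k)}(x_0)p^{jk}/k!$ and the log terms have valuation at least $jk + t - \text{ord}_p(k!)$, or better;
\item to make this precise, compare with $p^{-t}$ times the derivative of $f'+c_\chi g'/g$, i.e.\ use $\text{ord}_p\bigl((f'+c_\chi g'/g)^{(k-1)}\bigr)\ge t$, so that the $k$-th coefficient is $p^{jk+t}\mathcal C^{(k-1)}(x_0)/k!$ up to the log correction.
\end{itemize}
The delicate point is that the pieces $f$ and $c_\chi \log g$ individually may have smaller valuation than $t$. I would handle this by working with the single rational function $\Phi$, whose derivative is exactly $p^t\mathcal C(X)$, and integrating termwise. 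For odd $p$, $\text{ord}_p(k!)\le (k-1)/2$, so the terms with $k\ge 3$ are harmless.

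\emph{Parts (i) and (ii).} For (i), take $j = m-t-1\ge 1$, which is where $m\ge t+2$ is used. Summing over $y$ mod $p^{t+1}$ with $x_0$ running mod $p^{m-t-1}$, the quadratic and higher terms vanish mod $p^m$ (this needs $2j+t\ge m$, i.e.\ $j\ge (m-t)/2$, which holds since $m-t\ge2$). What remains is a linear character sum in $y$ with coefficient $\mathcal C(x_0)$, a unit when $\alpha\notin\mathcal A$, so $S_\alpha=0$. For (ii), use the standard quadratic (Gauss sum) evaluation. Take $j = \lceil (m-t)/2\rceil$, reduce to $x_0$ satisfying $\mathcal C(x_0)\equiv 0$ mod $p^{m-t-j}$, and use Hensel's lemma. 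Since $\mathcal C'(\alpha)\not\equiv 0$, there is a unique lift $x_0$ mod $p^{m-t-j}$. The remaining sum is a quadratic Gauss sum of modulus $p^{m-t-2j}\in\{1,p\}$ times $p^{j}$-size factors, giving $|S_\alpha| = p^{(m+t)/2}$.

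\emph{Part (iii), multiplicity $\nu$.} Substitute $x=\alpha+py$, giving
\begin{equation*}
S_\alpha = e_{p^m}(\phi(\alpha))\sum_{y \bmod p^{m-1}} e_{p^m}\bigl(\Phi(\alpha+py)-\Phi(\alpha)\bigr).
\end{equation*}
The polynomial in $y$ has derivative $p^{1+t}\mathcal C(\alpha+py)$, whose reduction has a zero of order $\nu$ at $y$-level. Rescaling shows this is a sum of the same type, mod $p^{m-\sigma}$ for some $\sigma\ge 1$, whose new critical point function $\mathcal C_1(Y)$ has degree mod $p$ at most $\nu$, and whose critical points (by multiplicity counting) have multiplicities summing to at most $\nu$. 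I would then induct on $m$ (and $\nu$), using the bound for the new sum and Theorem \ref{maintheorem}(ii) and the Weil-type bound \eqref{weilub2} as base cases. At each step the phase drops in valuation, and one shows the total bound is
\begin{equation*}
|S_\alpha|\le p^{\sigma'}\cdot \max\bigl(1.75\,p^{m'(1-\frac1{\nu+1})}\bigr),
\end{equation*}
which after bookkeeping yields $1.75\,p^{t/(\nu+1)}p^{m(1-\frac1{\nu+1})}$. The case where $m-t$ is small relative to $\nu$ is treated trivially, since then $|S_\alpha|\le p^{m-1}$ is already below the bound.

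\emph{Main obstacle.} The hard part is the induction in (iii). Two issues have to be controlled together:
\begin{itemize}
\item showing the rescaled sum again has the valuation parameter $t$ at most the previous one plus the gained power, so that the exponent bookkeeping closes exactly with constant $1.75$ rather than a constant growing with the number of steps;
\item the case of several critical points of the new sum, where the bound must come out as a single term and not a sum over points.
\end{itemize}
I would first try to pick the substitution depth so that the new leading coefficient of order $\nu+1$ is a unit. The sum then becomes a complete sum mod $p^{m-t-\ldots}$ of a degree-$(\nu+1)$ phase, and \eqref{weilub2} with $D=\nu+1$ finishes it in one step.
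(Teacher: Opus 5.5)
Your parts (i) and (ii) follow the paper's route. You use the phase $F_u(Y)$ with $F_u'(Y)=p^{t+1}\mathcal C(u+pY)$, the split $x=u+p^{m-t-1}v$ that kills the non-critical classes, and a quadratic Gauss sum at a simple critical point.

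There is one slip in (ii). The claim that terms with $k\ge 3$ are harmless fails for $p=3$, $m-t=3$. There the free variable $w$ left after Hensel lifting sits at depth $p$, and the cubic coefficient has valuation only $t+2=m-1$. The conclusion survives because $w^3\equiv w \pmod 3$ folds that term into the linear one, so the mod $3$ sum still has absolute value $\sqrt 3$. This is exactly the point of Remark \ref{errorremark2}. Separately, the leftover Gauss sum has modulus $p^{2j-(m-t)}$, not $p^{m-t-2j}$.

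\textbf{Part (iii) has a genuine gap.} After $x=\alpha+py$ you have $S_\alpha=p^{\sigma-1}\chi(g(\alpha))e_{p^m}(f(\alpha))\,S(G_\alpha,p^{m-\sigma})$. Here $\sigma=\text{ord}_p(F_\alpha)$ is fixed by the coefficient of least valuation. That is typically a low-order coefficient built from $c_0,\dots,c_{\nu-1}$, which are only known to be divisible by $p$. Consequently:
\begin{itemize}
\item $\sigma=t+2$ is possible with $m$ arbitrary;
\item the coefficient of $Y^{\nu+1}$ in $G_\alpha$ need not be a unit;
\item $S(G_\alpha,p^{m-\sigma})$ is a genuine prime-power sum that may have several critical points of its own.
\end{itemize}
Your fallback of finishing with \eqref{weilub2} and $D=\nu+1$ is therefore unavailable, since that bound is for prime moduli only.

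The missing ingredient is a uniform bound for complete pure sums modulo prime powers. The paper imports this as Theorem \ref{mainthm1}: $|S(G_\alpha,p^{m-\sigma})|\le 1.75\,p^{\tau/(d_1+1)}p^{(m-\sigma)(1-1/(d_1+1))}$ with $d_1=\deg_p(H_\alpha)$, valid for $m-\sigma\ge\tau+2$. It is applied to the whole rescaled sum at once, not critical point by critical point. Your induction leaves open exactly the two issues you flag: keeping the constant at $1.75$, and merging several critical points of $G_\alpha$ into a single term. Summing inductive local bounds over those points loses a constant factor in general; that theorem is what supplies the fix.

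To close the exponents you also need the relations read off from \eqref{galph2} and \eqref{H}:
\begin{itemize}
\item $\sigma\ge t+2$;
\item $\sigma\le\nu+1+t-\tau$;
\item $\deg_p(H_\alpha)\le\nu$;
\item $\tau\le\text{ord}_p(\deg_p G_\alpha)$.
\end{itemize}
The second relation absorbs the factor $p^{\sigma-1}$ exactly, giving $p^{\sigma-1}p^{\tau/(\nu+1)}p^{(m-\sigma)(1-1/(\nu+1))}\le p^{t/(\nu+1)}p^{m(1-1/(\nu+1))}$.

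Finally, the leftover range $\sigma\ge m-1-\tau$, where Theorem \ref{mainthm1} does not apply, is not all trivial. The trivial bound $p^{m-1}$ covers $m-t\le\nu+1$ outright, but not the boundary case $m-t=\nu+2$.
\begin{itemize}
\item When $\sigma=m-1$, $\tau=0$ and $p>1.75^{\nu+1}$, you need the Weil bound mod $p$. This requires first showing $p\nmid\deg_p(G_\alpha)$, hence $\deg_p(G_\alpha)\le\nu+1$.
\item When $m-1-\tau\le\sigma\le m-2$, one has $\tau\ge1$. This forces $p\mid\deg_p(G_\alpha)$ and hence $p-1\le\nu$, so $p^{1/(\nu+1)}<1.75$.
\end{itemize}
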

\noindent

\begin{corollary} \label{maincor1} Under the hypotheses of Theorem \ref{maintheorem}, we have
\begin{equation} \label{maincorub1}
|S(\chi,g,f,p^m)| \le 3\, p^{\frac t{\deg_p(\mathcal C_+)+1}} p^{m\big(1-\frac 1{\deg_p(\mathcal C_+) +1}\big)}.
\end{equation}
\end{corollary}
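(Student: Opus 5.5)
Since $S(\chi,g,f,p^m)=\sum_{\alpha=1}^p S_\alpha$, Theorem \ref{maintheorem}(i) lets us restrict to $\alpha\in\Cal A$. We then bound each $|S_\alpha|$ by Theorem \ref{maintheorem}(ii),(iii) and sum, using the fact that the multiplicities of the critical points add up to at most $\deg_p(\mathcal C_+)$.

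Set $M:=\deg_p(\mathcal C_+)$ and let $\nu_\alpha$ be the multiplicity of $\alpha\in\Cal A$, so that $\sum_{\alpha\in\Cal A}\nu_\alpha\le M$. The zeros of $\mathcal C$ mod $p$ are zeros of $\mathcal C_+$ mod $p$, since $\mathcal C_+,\mathcal C_-$ are coprime and $p\nmid \mathcal C_-$ at points of the domain. We may assume $M\ge1$: if $\mathcal C_+$ is a nonzero constant mod $p$ then $\Cal A$ is empty and $S=0$. Write $u:=p^{(m-t)}\ge p^2$, valid because $m\ge t+2$. Then $|S_\alpha|\le 1.75\,p^{m}u^{-1/(\nu_\alpha+1)}$, and the target bound is $3\,p^m u^{-1/(M+1)}$. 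It therefore suffices to show
\begin{equation*}
1.75\sum_{\alpha\in\Cal A} u^{-1/(\nu_\alpha+1)}\le 3\,u^{-1/(M+1)}.
\end{equation*}

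If $|\Cal A|=1$, this holds immediately: $\nu\le M$ and $1.75<3$. For the general case we use convexity. The function $\phi(\nu)=u^{-1/(\nu+1)}$ is increasing in $\nu$, and we want to maximize $\sum\phi(\nu_\alpha)$ subject to $\sum\nu_\alpha\le M$. Since $\phi(\nu)/\phi(M)=u^{-(M-\nu)/((\nu+1)(M+1))}$, each term with $\nu<M$ is smaller by at least the factor $u^{-(M-\nu)/(M+1)^2}$.

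This is the main obstacle, and a cruder route is cleaner. Multiplicity-one points contribute exactly $p^{(m+t)/2}=p^m u^{-1/2}$ by (ii). For a point with $\nu\ge2$ we have $1/(\nu+1)\ge 1/(M+1)$, and the number of such points is at most $M/2$. The worry is that as many as $M$ points could each contribute about $u^{-1/(\nu+1)}$, giving a factor of $M$. To control this, use $u\ge p^2$ together with the fact that the number of critical points is at most $\min(p,M)$. If $k$ points have multiplicities $\nu_i$ with $\sum\nu_i\le M$, then $\nu_i+1\le (M+1)/k+1$ roughly, so each term is at most $u^{-k/(M+1+k)}$. We then need $1.75\,k\,u^{-k/(M+k+1)}\le 3u^{-1/(M+1)}$, which reduces to comparing $k$ with $u^{(k-1)/(M+k+1)}$ roughly.

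When $u$ is large compared to $k^{(M+k+1)/(k-1)}$ this holds. Otherwise $u$ is small, and then the trivial bound $|S|\le p^m$ can beat $3p^m u^{-1/(M+1)}$ provided $u^{1/(M+1)}\le 3$. Bridging the intermediate range is where I would expect a careful case analysis to be needed. There I would use $k\le p\le u^{1/2}$, which gives $k\,u^{-k/(M+k+1)}\le u^{1/2-k/(M+k+1)}$, and optimize over $k\le M$. The constant $3$ presumably comes out of exactly this optimization.
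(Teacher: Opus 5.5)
Your reduction to proving $1.75\sum_{\alpha\in\Cal A}u^{-1/(\nu_\alpha+1)}\le 3\,u^{-1/(M+1)}$, with $u=p^{m-t}\ge p^2$, is fine. So is your observation that the trivial bound $|S|\le p^m$ handles the range $u^{1/(M+1)}\le 3$.

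The step meant to do the real work is false, however. From $\sum_i\nu_i\le M$ you conclude that $\nu_i+1\le (M+1)/k+1$ for each $i$, and hence that each term is at most $u^{-k/(M+1+k)}$. The constraint only bounds the \emph{average} multiplicity. With $M=4$ and two critical points of multiplicities $3$ and $1$, the first term is $u^{-1/4}$, which exceeds your bound $u^{-2/7}$ for every $u>1$.

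This cannot be repaired by a better spreading argument. The function $\phi(\nu)=u^{-1/(\nu+1)}$ is convex in $\nu$ whenever $\log u>2(\nu+1)$. So for large $u$, $\sum\phi(\nu_\alpha)$ is maximized by concentrating the multiplicity at one point: for instance $u^{-1/4}+u^{-1/2}>2u^{-1/3}$ once $u>2^{12}$. In addition, the proposal explicitly leaves the ``intermediate range'' and the origin of the constant $3$ undone, so as written there is no proof.

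The missing idea, which is what the paper uses, is to compare contributions \emph{per unit of multiplicity}. Let $n_j$ be the number of critical points of multiplicity $j$, and set $x_j=jn_j$ and $\delta=u^{-1}$. Then $\sum_\alpha\delta^{1/(\nu_\alpha+1)}=\sum_j x_j\,\delta^{1/(j+1)}/j\le M\max_{1\le j\le M}\delta^{1/(j+1)}/j$, because $\sum_j x_j\le M$.

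The ratio of consecutive terms of $\delta^{1/(j+1)}/j$ is $\frac{j}{j+1}\delta^{-1/((j+1)(j+2))}$. So the sequence is increasing once $\delta\le(1+\frac1{M-1})^{-M(M+1)}$. In that case the maximum is $\delta^{1/(M+1)}/M$, which gives $|S|\le 1.75\,p^m\delta^{1/(M+1)}$.

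In the complementary range $u^{1/(M+1)}<(1+\frac1{M-1})^M$, the right-hand side is below $3$ for $M\ge6$, so your trivial bound takes over. For $2\le M\le5$ that threshold exceeds $3$, which leaves a gap between the two ranges. There one must go through the finitely many multiplicity patterns by hand. This uses three facts: simple critical points contribute exactly $p^{(m+t)/2}$, with no factor $1.75$, by Theorem \ref{maintheorem}(ii); $m-t\ge2$; and $p\ge3$. For example, the pattern $(2,2,1)$ requires $\frac13p^{-2/3}+\frac{3.5}{3}p^{-1/3}<1$, which fails if the simple point is given the factor $1.75$. This case analysis is where the constant $3$ actually comes from.
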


\noindent In particular, since $\deg_p(\mathcal C_+) \le D-1$, as we show in Lemma \ref{CDub}, we readily deduce the following.

\begin{corollary} \label{m2} Let $f,g$ be rational functions over $\mathbb
Z$, not both constants, $p$ be an odd prime, and $\chi$ a multiplicative
character mod ${p^m}$. Then for $m \ge 1$, 
$$
|S(\chi,g,f,p^m)| \le \begin{cases} p^{\frac {t+1}D} p^{m(1-\frac 1D)}, & \text{if $m \le t+1$;}\\ 3p^{\frac tD} p^{m(1-\frac 1D)}, & \text{if $m \ge t+2$.}\end{cases}
$$
\end{corollary}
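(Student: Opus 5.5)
We prove Corollary \ref{m2} by splitting into the two ranges of $m$ in the statement. The large range comes from Corollary \ref{maincor1}; the small range is a trivial bound.

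\emph{Case $m \le t+1$.} Here we use the trivial bound $|S(\chi,g,f,p^m)| \le p^m$. It suffices to check that $p^m \le p^{\frac{t+1}{D}} p^{m(1-\frac 1D)}$. This is equivalent to $p^{m/D} \le p^{(t+1)/D}$, that is, to $m \le t+1$, which is exactly our hypothesis. We should make sure $D \ge 1$ so that the exponents make sense. If $f$ and $g$ are not both constant then either $\deg(f)\ge 1$ or $g$ has a zero or pole, so $D\ge 1$. The degenerate situation where $D$ could be $0$ is excluded because $f'+c_\chi g'/g \not\equiv 0$ by Lemma \ref{Fieldlemma}.

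\emph{Case $m \ge t+2$.} This is exactly the hypothesis of Corollary \ref{maincor1}, which gives
$$
|S(\chi,g,f,p^m)| \le 3\, p^{\frac{t}{e+1}}p^{m(1-\frac 1{e+1})}, \qquad e:=\deg_p(\mathcal C_+).
$$
We rewrite this as $3p^m\cdot p^{-(m-t)/(e+1)}$. Since $m-t\ge 2>0$, the quantity $p^{-(m-t)/(e+1)}$ increases with $e$. By Lemma \ref{CDub} we have $e+1 \le D$, so $p^{-(m-t)/(e+1)} \le p^{-(m-t)/D}$. This yields $3p^{t/D}p^{m(1-1/D)}$, as claimed.

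The only substantive input is the inequality $\deg_p(\mathcal C_+)\le D-1$ from Lemma \ref{CDub}, which we take as given. The proof of that lemma would clear denominators in $f'+c_\chi g'/g$. The denominator $f_-^2 g_+g_-$ can be replaced by the product of $f_-\,\mathrm{rad}(f_-)$ with the radical of $g_+g_-$, since logarithmic derivatives only produce simple poles at the zeros of $g$. The numerator then has degree at most $\deg(f)+\mathcal Z(f_-g_+g_-)-1$, and reducing mod $p$ after dividing by $p^t$ can only lower the degree. Beyond that, the one point needing care is the monotonicity in the exponent, which relies on $m>t$. This holds in the second case, and in the first case no such comparison is needed.
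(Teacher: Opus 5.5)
Your proof is correct and follows the paper's own argument: the trivial bound $p^m = p^{m/D}p^{m(1-1/D)}$ for $m\le t+1$, and Corollary \ref{maincor1} combined with $\deg_p(\mathcal C_+)\le D-1$ from Lemma \ref{CDub} for $m\ge t+2$; your explicit monotonicity check in $e$ (which uses $m>t$) simply spells out what the paper calls immediate. (A minor point in your aside on Lemma \ref{CDub}: to get exactly $D-1$ you should use the least common denominator, of degree $\deg(f_-)+\mathcal Z(f_-g_+g_-)$, rather than the product, since $f_-$ and $g_+g_-$ may share zeros; you take the lemma as given, so this does not affect the argument.)
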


\begin{remark} We note that the results of this section, Theorem \ref{maintheorem}, Corollary \ref{maincor1} and Corollary \ref{m2} hold regardless of whether $S(\chi,g,f,p^m)$ is degenerate or not.
\end{remark}

\section{Upper bound for $t(\chi,g,f,p^m)$}
For any prime $p$, positive integer $k$, put $N_k:=k+1+\lfloor \log_p k\rfloor$, and observe that
$$
\log(1+pX)=\sum_{n=1}^\infty  (-1)^{n+1} \frac {(pX)^n}{n}\equiv \sum_{n=1}^{N_k} (-1)^{n+1} \frac {p^n}n X^n 
 \pmod {p^k},
 $$
  so that $\log(1+pX)$ can be viewed as a polynomial over $\mathbb Z$, mod $p^k$. 
 For
 any rational function $u(X)$ over $\mathbb Q$ with $\text{ord}_p(u(X)) \ge 0$, the function
$\log(1+pu(X))$ 
can in the same manner be viewed as a rational function over $\mathbb Z$, mod $p^k$.

\begin{proposition} \label{tprop} Let $f,g$ be rational functions over $\mathbb Q$, $p$ a prime with  $\text{ord}_p(f) \ge 0$ and $\text{ord}_p(g) \ge 0$, and $k$ a positive integer such that $p^k|(f'(X)+g'(X)/g(X))$. Then there exist  rational functions $u(X)$, $g_0(X)$ and  $f_i(X)$, $0 \le i \le k$  such that $\text{ord}_p(u(X)) \ge 0$ and
\begin{align}
f(X) & \equiv  \sum_{i=0}^{k-1} p^if_i(X^{p^{k-i}}) -\log(1+pu(X)) \pmod {p^k};\label{fsp}\\
g(X) &\equiv g_0(X^{p^k})(1+pu(X))\pmod {p^k}.\label{gsp}
\end{align}
\end{proposition}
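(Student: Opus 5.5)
I would prove Proposition~\ref{tprop} by induction on $k$. The base case does all the real work, and the inductive step reduces to a statement about differentials over $\mathbb F_p(X)$.

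\emph{Base case $k=1$.} Reduce mod $p$ and write $\bar f,\bar g\in\mathbb F_p(X)$; the hypothesis gives $\bar g'/\bar g=-\bar f'$. A derivative in $\mathbb F_p(X)$ has residue $0$ at every point of $\overline{\mathbb F}_p\cup\{\infty\}$. On the other hand, $\bar g'/\bar g$ has a simple pole at each zero or pole of $\bar g$, with residue equal to its multiplicity read mod $p$. Hence every multiplicity is divisible by $p$, so $\bar g=b\,h^p=b\,h^{(\sigma)}(X^p)$, where $\sigma$ is Frobenius acting on the coefficients. It follows that $\bar g'/\bar g=0$, so $\bar f'=0$, and therefore $\bar f=\bar f_0(X^p)$. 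Lifting these to rational functions over $\mathbb Z$ gives \eqref{fsp} and \eqref{gsp} mod $p$ with $u=0$.

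\emph{Inductive step.} Assume the statement for $k$ and that $p^{k+1}\mid f'+g'/g$. Applying the case $k$, I write the congruences as exact identities with correction terms:
\begin{align*}
g&=g_0(X^{p^k})(1+pu)(1+p^kv),\\
f&=\sum_{i<k}p^if_i(X^{p^{k-i}})-\log(1+pu)+p^kw,
\end{align*}
with $\text{ord}_p(v),\text{ord}_p(w)\ge 0$ and $\log$ the genuine $p$-adic logarithm. In $f'+g'/g$ the two $\log(1+pu)$ contributions cancel exactly. Each remaining term carries a factor $p^k$, which gives
\begin{equation*}
f'+\frac{g'}{g}\equiv p^k\Big(\sum_{i<k}X^{p^{k-i}-1}f_i'(X^{p^{k-i}})+X^{p^k-1}\frac{g_0'}{g_0}(X^{p^k})+(w+v)'\Big)\pmod{p^{k+1}}.
\end{equation*}
The hypothesis therefore says the bracket $E$ vanishes in $\mathbb F_p(X)$.

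The key step, and the main obstacle, is to extract structure from $E\equiv 0$. I would use the Cartier operator $\mathcal C$ on differentials of $\mathbb F_p(X)$, which has the following properties:
\begin{itemize}
\item it kills exact forms;
\item it sends $X^{p^j-1}\varphi(X^{p^j})\,dX$ to $X^{p^{j-1}-1}\varphi^{(1/p)}(X^{p^{j-1}})\,dX$;
\item on level $j=1$ it sends $X^{p-1}\varphi(X^p)\,dX$ to $\varphi^{(1/p)}(X)\,dX$.
\end{itemize}
Iterating $\mathcal C$ peels off the levels one at a time. Working from the top level $p^k$ down, I expect to show two things. First, each $\bar f_i$ has derivative whose non-exact part vanishes, so $\bar f_i=\tilde f_i(X^p)+(\text{exact pieces to be absorbed})$. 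Second, the residue argument of the base case applied to $g_0'/g_0$ shows $\bar g_0=b\,h^p$. If this level separation becomes tangled, I would instead argue by comparing Laurent expansions at each pole, where the monomials $X^{n-1}$ with $p^j\,\|\,n$ separate cleanly.

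\emph{Re-assembly.} It remains to rebuild the representation at level $k+1$. Replace $g_0(X^{p^k})$ by $g_0^*(X^{p^{k+1}})$, absorbing the discrepancy, which is $\equiv 1\pmod p$, into the factor $1+pu$. Set $1+pu^*=(1+pu)(1+p^kv)\cdot(\text{that discrepancy})$; then $\log(1+pu^*)$ changes by $p^kv$ plus a computable term mod $p^{k+1}$. Finally use $E\equiv0$, in the form $w+v\equiv(\text{antiderivative terms})+\psi(X^p)\pmod p$, to absorb $p^kw$ into a new $p^kf_k(X^p)$ and to shift each $f_i(X^{p^{k-i}})$ to $f_i^*(X^{p^{k+1-i}})$. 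Most of the bookkeeping sits in checking that the lower-order corrections created by these substitutions land at the right $p$-power; each is routine once the level-separation lemma is in place.
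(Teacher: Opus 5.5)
Your proposal is correct and is essentially the paper's proof. Both argue by induction on $k$, with a base case that is exactly the simple-pole/residue argument of Lemmas \ref{Fieldlemma} and \ref{fprimefplemma}, followed by separating the levels of $E\equiv 0$ mod $p$ and the same re-assembly: the discrepancy of $g_0$ and the factor $1+p^k v$ are absorbed into a new $1+pU$, and $p^kw$ cancels against $p^kv$ up to a function of $X^p$.

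Your Cartier operator is a coordinate-free packaging of the paper's Lemma \ref{DE} together with its ``divide by $X^{p-1}$ and set $Y=X^p$'' step, and your Laurent-monomial fallback is literally the paper's argument. Note that the separation is clean: it gives $(\bar w+\bar v)'=0$, $\bar f_i'=0$ for $1\le i\le k-1$, and $\bar f_0'+\bar g_0'/\bar g_0=0$ exactly. So no ``exact pieces'' or ``antiderivative terms'' are left over to absorb.
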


\noindent The nonzero coefficients of the $f_i$, as well as $g_0$ may be taken to be nonzero integers mod $p$.  The converse is immediate. Indeed if $f,g$ are as in \eqref{fsp} and \eqref{gsp}, then
\begin{equation}
f'(X)+\frac {g'(X)}{g(X)} \equiv \frac {-pu'(X)}{1+pu(X)} +\frac {pu'(X)}{1+pu(X)} \equiv 0 \pmod {p^k}.
\end{equation}
Taking $g(X)=1$ gives the following corollary.

\begin{corollary}\label{fspcorollary} Let $f$ be a rational function over $\mathbb Q$, $p$ a prime with $\text{ord}_p(f) \ge 0$, and $k$ a positive integer such that  $p^k|f'(X)$. Then there exist  rational functions  $f_i(X)$, $0 \le i \le k-1$ such that
\begin{align}
f(X) & \equiv  \sum_{i=0}^{k-1} p^if_i(X^{p^{k-i}}) \pmod {p^k}. \label{fspcor}
\end{align}
In particular,  $p^k|\deg_p(f)$.
\end{corollary}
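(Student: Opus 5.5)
The plan is to obtain the corollary by specializing Proposition \ref{tprop} to $g(X)=1$, and then to show that the logarithmic term $-\log(1+pu(X))$ produced there can be absorbed into the $i=0$ term $f_0(X^{p^k})$.

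\textbf{Applying the proposition.} With $g=1$ we have $g'/g=0$, so the hypothesis $p^k\mid f'(X)$ is exactly the hypothesis of Proposition \ref{tprop}. The proposition then gives rational functions $u$, $g_0$, $f_0,\dots,f_{k-1}$ with $\text{ord}_p(u)\ge 0$, together with
$$f(X)\equiv \sum_{i=0}^{k-1}p^if_i(X^{p^{k-i}})-\log(1+pu(X)) \pmod{p^k}$$
and
$$1\equiv g_0(X^{p^k})(1+pu(X)) \pmod{p^k}.$$

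\textbf{Rewriting the logarithm in terms of $X^{p^k}$.}
\begin{enumerate}
\item Reducing the second congruence mod $p$ gives $g_0(X^{p^k})\equiv 1 \pmod p$.
\item The substitution $Y\mapsto X^{p^k}$ is injective on coefficients, so $g_0(Y)\equiv 1\pmod p$ as a rational function. Hence we may write $g_0(Y)=1+pv(Y)$ with $\text{ord}_p(v)\ge 0$.
\item The second congruence now reads $(1+pu(X))(1+pv(X^{p^k}))=1+p^kw(X)$ for some $w$ with $\text{ord}_p(w)\ge 0$.
\item Apply the additivity of the $p$-adic logarithm. This is a formal power series identity, so it is valid for rational functions with $p$-integral coefficients. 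We also use $\log(1+p^kw)\equiv 0\pmod{p^k}$, since $\text{ord}_p\big((p^kw)^n/n\big)\ge k$ for every $n\ge1$.
\item Together these give $\log(1+pu(X))\equiv -\log(1+pv(X^{p^k}))\pmod{p^k}$.
\item The right-hand side is, mod $p^k$, a rational function of $X^{p^k}$, as in the truncation discussion preceding Proposition \ref{tprop}. Replacing $f_0(Y)$ by $f_0(Y)+\log(1+pv(Y))$, truncated mod $p^k$, yields \eqref{fspcor}.
\end{enumerate}
This absorption step is the only place where anything beyond quoting Proposition \ref{tprop} is needed. The main point to check is that additivity of $\log$ may legitimately be used for these rational-function arguments mod $p^k$.

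\textbf{Divisibility of $\deg_p(f)$.} Reducing \eqref{fspcor} mod $p$ gives $f(X)\equiv f_0(X^{p^k})\pmod p$.
\begin{itemize}
\item Write $\bar f_0=A/B$ in lowest terms over $\mathbb F_p$.
\item Over $\overline{\mathbb F}_p$ we have $A(X^{p^k})=\tilde A(X)^{p^k}$ and $B(X^{p^k})=\tilde B(X)^{p^k}$, where $\tilde A,\tilde B$ are obtained by taking $p^k$-th roots of the coefficients.
\item A common factor of $\tilde A$ and $\tilde B$ would, after applying Frobenius, give a common factor of $A$ and $B$. So $A(X^{p^k})$ and $B(X^{p^k})$ are coprime.
\item It follows that $\deg_p(f)=p^k\max\{\deg A,\deg B\}$, which is divisible by $p^k$. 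This includes the trivial case $\deg_p(f)=0$.
\end{itemize}
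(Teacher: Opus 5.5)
Your proof is correct and takes the paper's route, which is simply to set $g(X)=1$ in Proposition \ref{tprop}. The paper states this in one line and passes silently over the leftover term $-\log(1+pu(X))$. Your absorption step supplies that missing detail: you use $1\equiv g_0(X^{p^k})(1+pu(X))$ to rewrite the term mod $p^k$ as a function of $X^{p^k}$ and fold it into $f_0$. Your Frobenius argument for $p^k\mid\deg_p(f)$ likewise fills in the divisibility claim that the paper leaves implicit.
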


We readily deduce from the proposition the following.

\begin{corollary} \label{tboundcor} Let $f,g$ be rational functions over $\mathbb Q$, not both constants, $p$ a prime with $\text{ord}_p(f) \ge 0$ and $\text{ord}_p(g) \ge 0$, $\chi$ be a multiplicative character mod $p^m$ with $p^{t_\chi}\|c_\chi$ and  
$t=t(\chi,g,f,p^m)$ be as defined in \eqref{t}.  Then 
\begin{align*}
f(X) &\equiv f_0(X)^{p^t} \pmod p,
\end{align*}
for some rational function $f_0$ over $\mathbb Q$, and if $t \ge t_\chi$, then
 \begin{align*}
g(X) & \equiv g_0(X)^{p^{t-t_\chi}} \pmod p, 
\end{align*}
for some rational function $g_0$ over $\mathbb Q$. In particular,  
\begin{equation} \label{tdiv}
p^t|(\deg_p(f), \, c_\chi \deg_p(g)).
\end{equation}
\end{corollary}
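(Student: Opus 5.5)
We deduce the corollary from Proposition \ref{tprop} by absorbing the character into $g$. Since $c_\chi$ is a positive integer, $G(X):=g(X)^{c_\chi}$ is a rational function over $\mathbb Q$ with $\text{ord}_p(G)\ge 0$, and $G'/G=c_\chi g'/g$. Hence, by the definition \eqref{t} of $t$,
\[
p^t \mid f'(X)+\frac{G'(X)}{G(X)}.
\]
If $t=0$ there is nothing to prove: take $f_0=f$, and if also $t_\chi=0$ take $g_0=g$. So we assume $t\ge 1$ and apply Proposition \ref{tprop} to the pair $(f,G)$ with $k=t$.

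\emph{The claim for $f$.} Reducing \eqref{fsp} modulo $p$, every term with $i\ge 1$ vanishes. The term $\log(1+pu(X))$ is also $\equiv 0 \pmod p$, since $\text{ord}_p(u)\ge 0$ and each term $(pu)^n/n$ of the series has positive valuation. This leaves
\[
f(X)\equiv f_0\bigl(X^{p^t}\bigr) \pmod p .
\]
The coefficients of $f_0$ may be taken in $\mathbb F_p$, where $a^{p^t}=a$. Hence in $\mathbb F_p(X)$ we have $f_0(X^{p^t})=f_0(X)^{p^t}$, which is the claimed form.

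\emph{The claim for $g$.} In the same way, \eqref{gsp} gives $G\equiv g_0(X^{p^t})\equiv h(X)^{p^t}\pmod p$ for some rational function $h$. Write $c_\chi=p^{t_\chi}c'$ with $p\nmid c'$. Then
\[
\bigl(g^{c'}\bigr)^{p^{t_\chi}}\equiv h^{p^t} \pmod p .
\]
The Frobenius map $y\mapsto y^p$ is injective on $\mathbb F_p(X)$, so when $t\ge t_\chi$ this gives $g^{c'}=h^{p^{s}}$ in $\mathbb F_p(X)$, where $s:=t-t_\chi$. Next choose integers $a,b$ with $ac'+bp^{s}=1$, which is possible because $p\nmid c'$. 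Negative exponents are allowed, since $g\not\equiv 0 \pmod p$ under the standing assumption $\text{ord}_p(g)=0$. Then
\[
g=(g^{c'})^a g^{bp^{s}}=\bigl(h^a g^b\bigr)^{p^{s}} \quad\text{in } \mathbb F_p(X),
\]
and we lift $g_0:=h^ag^b$ to a rational function over $\mathbb Q$.

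\emph{The divisibility \eqref{tdiv}.} Degrees multiply under $p$-th powering, so $\deg_p(f)=p^t\deg_p(f_0)$ is divisible by $p^t$. If $t\ge t_\chi$, then
\[
c_\chi\deg_p(g)=p^{t_\chi}c'\,p^{t-t_\chi}\deg_p(g_0),
\]
which is divisible by $p^t$. If $t<t_\chi$, then $p^t\mid c_\chi$ already.

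The only delicate point is the extraction of the $p^{t-t_\chi}$-th root of $g$ from the statement about $g^{c_\chi}$. This requires both the injectivity of Frobenius on $\mathbb F_p(X)$ and the Bezout step using $p\nmid c'$.
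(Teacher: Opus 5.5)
Your proof is correct and follows the paper's route. Like the paper, you set $G=g^{c_\chi}$ so that $f'+c_\chi g'/g=f'+G'/G$, apply Proposition \ref{tprop} with $k=t$, and read off $f\equiv f_0(X)^{p^t}$ and $G\equiv h(X)^{p^t} \pmod p$.

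The only real divergence is how you extract the $p^{t-t_\chi}$-th root of $g$. The paper argues factor by factor in $\mathbb F_p[X]$: if $q^e\| g$ for an irreducible $q$, then $q^{c_\chi e}\| G_0^{p^t}$, so $p^t\mid c_\chi e$. Hence either $p^t\mid c_\chi$, or $p^{t-t_\chi}\mid e$ for every $q$. You instead cancel the $p^{t_\chi}$-th power using injectivity of Frobenius on $\mathbb F_p(X)$, and then use a B\'ezout relation $ac'+bp^{s}=1$ to write $g=(h^ag^b)^{p^{s}}$.

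Your version avoids unique factorization and produces $g_0$ explicitly. The paper's multiplicity count is a little quicker and gives \eqref{tdiv} directly from $p^t\mid c_\chi e$. Your separate handling of $t=0$, where the Proposition (which needs $k\ge 1$) does not apply, is a small point the paper leaves implicit.

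Both arguments need $g\not\equiv 0 \pmod p$, so your appeal to the standing assumption $\text{ord}_p(g)=0$ is fine. If $\text{ord}_p(g)>0$, the claim for $g$ holds trivially with $g_0=0$.
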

\noindent
  
\begin{proof}  Put $G(X):=g(X)^{c_\chi}$, so that
$f'+c_\chi g'/g= f'+G'/G$. Thus $p^t\| (f'+G'/G)$ and we obtain from the proposition that
$f(X) \equiv f_0(X^{p^t})\equiv f_0(X)^{p^t}$ mod $p$ and $G(X) \equiv G_0(X^{p^t})\equiv G_0(X)^{p^t}$ mod $p$ for some rational functions $f_0,G_0$.
Thus $p^t|\deg_p(f)$. 

Suppose now that $q(X)$ is an irreducible factor of $g(X)$ over $\mathbb F_p$ with $q(X)^e\|g(X)$, so that $q(X)^{c_\chi e} \|G(X)$ that is $q(X)^{c_\chi e}\|G_0(X)^{p^t}$ over $\mathbb F_p$. Then it follows that $p^t|c_\chi e$ and so 
either $p^t|c_\chi$, or $t > t_\chi$ and $p^{t-t_\chi}|e$. Since this holds for every irreducible factor of $g(X)$ it follows that either $p^t|c_\chi$, or $g(X)=g_0(X)^{p^{t-t_\chi}}$ over $\mathbb F_p$ for some $g_0(X)$, and $p^{t-t_\chi}|\deg_p(g)$. In both cases we have $p^t|c_\chi \deg_p(g)$.
\end{proof}

\begin{remark} \label{errorremark} In \cite{cochrane2002}, Lemma 2.2 asserted incorrectly that $t=\min\{\text{ord}_p(f'),\, \text{ord}_p(cg')\}$, from which we deduced the result in Corollary \ref{tboundcor} above.  The proof given for Lemma 2.2 implicitly  assumed that the zeros of the distinct irreducible factors of $f$ and $g$ were all distinct mod $p$.  The next example provides $f$ and $g$ where $t$ can be arbitrarily larger than $\text{ord}_p(f')$ and $\text{ord}_p(cg')$.
\end{remark}

\begin{example} For any positive integers $T \le t$, let $f(X)=X^{p^t}+pX^{p^{T-1}}$, and $g(X)$ be a polynomial with $g(X)\equiv X^{p^t}e^{-pX^{p^{T-1}}}$ mod $p^{t+1}$. Then 
$$ 
f'+g'/g \equiv p^t X^{p^t-1} + p^{T}X^{p^{T-1}-1} + \frac {p^t}{X} -p^T X^{p^{T-1}-1} \equiv p^t\Big( X^{p^t-1} +  \frac {1}{X}\Big) \pmod {p^{t+1}}.
$$
Thus we have $p^T\|f'(X)$, $p^T\|g'(X)$ and $p^t\| (f'+g'/g)$.
\end{example}

\section{Proof of Proposition \ref{tprop}}
\begin{lemma}\label{Fieldlemma}  Let $f,g$ be rational functions over a field $F$ with $g \neq 0$ and $c \in F$. Then $f'+cg'/g=0$ if and only if $f'=0$ and $cg'=0$.
\end{lemma}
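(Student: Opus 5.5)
The reverse implication is immediate, so the content is the forward direction. We suppose $f'+cg'/g=0$ with $g\neq 0$, and the goal is to show $f'=0$ and $cg'=0$. If $c=0$ this is trivial, so assume $c\neq 0$; it then suffices to show $g'=0$, since $f'=-cg'/g$ follows. The plan is to compare poles, using partial fractions over an algebraic closure $\overline F$.

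First we factor $g$ over $\overline F$ as $g(X)=a\prod_i (X-\beta_i)^{e_i}$ with distinct $\beta_i$ and nonzero integers $e_i$. Then
$$\frac{g'}{g}=\sum_i \frac{e_i}{X-\beta_i},$$
where $e_i$ is read in $F$, so it may vanish in characteristic $p$. The key point is that $g'/g$ has at most simple poles. A derivative $f'$, on the other hand, never has residue at any point: writing $f$ in partial fractions as a polynomial plus terms $\gamma_{j,k}(X-\alpha_j)^{-k}$, differentiation sends $(X-\alpha)^{-k}$ to $-k(X-\alpha)^{-k-1}$, whose order is at least 2. So $f'$ has no $(X-\alpha)^{-1}$ terms. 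Uniqueness of partial fraction decomposition over $\overline F$ applied to $f'=-cg'/g$ then gives that every coefficient $ce_i$ vanishes, hence $cg'/g=0$, hence $f'=0$.

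The main obstacle is verifying the claim that $f'$ has no residue term in characteristic $p$. The worry is that differentiation does not shift the pole order cleanly: the coefficient $-k$ may vanish, and the polynomial part can produce terms. Neither breaks the argument, because $(X-\alpha)^{-k-1}$ with $k\ge1$ is never a simple-pole term, and polynomial parts differentiate to polynomials. So the decomposition of $f'$ consists of a polynomial plus poles of order $\ge 2$. The decomposition of $cg'/g$ is a sum of simple-pole terms with no polynomial part. Matching the two forces both sides to be $0$. With $c\neq0$ and $ce_i=0$ for all $i$ we get $e_i=0$ in $F$, so $g'/g=0$ and thus $g'=0$, which completes the proof.
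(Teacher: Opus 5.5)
Your proof is correct and is essentially the paper's argument: both work over $\overline F$, note that $cg'/g=\sum_i ce_i/(X-\beta_i)$ has only simple poles while $f'$ has no simple pole, and conclude that both sides vanish. Your use of uniqueness of partial fractions (no $(X-\alpha)^{-1}$ terms in $f'$, even when the factor $-k$ vanishes in characteristic $p$) just spells out the step the paper states in one line.
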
 

\begin{proof} Suppose that $f'+cg'/g=0$ and consider the equation
$
f' = -cg'/g
$
together with the factorizations of both sides
over the algebraic closure of $F$. If $cg'/g$ is nonzero, then it has a simple pole at some zero or pole of $g$. On the other hand, $f'$ has no simple pole. Thus $cg'/g$ must be identically zero, implying that $cg'=f'=0$. The converse is trivial.
\end{proof}

\begin{lemma} \label{fprimefplemma} If $f$ is a rational function over $\mathbb F_p$ such that $f'=0$ identically, then $f(X)=F(X^p)$ for some rational function $F$ over $\mathbb F_p$.
\end{lemma}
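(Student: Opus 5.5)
The plan is to reduce to the polynomial case by writing $f=f_+/f_-$ in lowest terms over $\mathbb F_p$ and analysing the quotient rule. For polynomials the statement is immediate. If $h(X)=\sum a_jX^j\in\mathbb F_p[X]$ has $h'=\sum ja_jX^{j-1}=0$, then $ja_j=0$ for all $j$. So $a_j=0$ unless $p\mid j$, which gives $h(X)=H(X^p)$ with $H(X)=\sum a_{pk}X^k$.

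For a general rational function, take coprime $f_+,f_-\in\mathbb F_p[X]$ with $f=f_+/f_-$. Then $f'=0$ means $f_+'f_-=f_+f_-'$. Since $f_-$ divides $f_+f_-'$ and is coprime to $f_+$, we get $f_-\mid f_-'$. Because $\deg f_-'<\deg f_-$, this forces $f_-'=0$. Then $f_+'f_-=0$ with $f_-\ne 0$, so $f_+'=0$. Applying the polynomial case to each gives $f_\pm(X)=F_\pm(X^p)$, and hence $f(X)=F(X^p)$ with $F=F_+/F_-$.

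The only step needing any care is the divisibility-and-degree argument showing $f_-'=0$, which relies on taking the representation in lowest terms. There is no other obstacle.
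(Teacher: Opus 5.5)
Your proof is correct, but it takes a different route from the paper's.

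\textbf{The paper's route.} It expands $f$ as a formal Laurent series $\sum_{n\ge n_0}a_nX^n$ and reads off from $f'=0$ that $a_n=0$ unless $p\mid n$. It then asserts that $F(X)=\sum_l a_{lp}X^l$ is rational because the decimated sequence $(a_{lp})$ still satisfies a linear recurrence. This relies on two facts the paper does not prove: that rational series are exactly those with eventually linearly recurrent coefficients, and that decimation preserves linear recurrence. The second is true over any field, for instance by applying Cayley--Hamilton to $A^p$ for a companion matrix $A$.

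\textbf{Your route.} Your argument stays entirely inside $\mathbb F_p[X]$. The quotient rule, the coprimality of $f_+,f_-$, and the degree comparison ($f_-\mid f_-'$ with $\deg f_-'<\deg f_-$) reduce everything to the trivial polynomial case. This is more elementary and fully self-contained. It also gives a slightly stronger conclusion: the reduced numerator and denominator are each polynomials in $X^p$.

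\textbf{Trade-off.} The paper's Laurent-series approach has the advantage of matching the argument used immediately afterwards in Lemma \ref{DE}, where comparing exponents modulo $p$ in Laurent expansions is the natural tool.
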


\begin{proof} Let $f$ have a formal Laurent expansion $f(X)=\sum_{n=n_0}^\infty a_nX^n$ for some integer $n_0$ and $a_n \in \mathbb F_p$.  Since $f$ is rational, the coefficients $\{a_n\}_{n=L}^\infty$ satisfy a linear recurrence. Then $f'=0$ implies that $a_nn=0$ for $n\ge n_0$, and so $f(X)= \sum_{n \ge n_0, p|n} a_nX^n = F(X^p)$, where
$
F(X):= \sum_{l \ge n_0/p}a_{lp}X^{lp},
$
 a rational function since the coefficients $\{a_{lp}\}_{l\ge n_0/p}$ satisfy a linear recurrence.
\end{proof}

\begin{lemma} \label{DE} Let $f,g$ be rational functions over $\mathbb F_p$ such that $f'(X)=X^{-1}g(X^p)$. Then $f'(X)=0$. 
\end{lemma}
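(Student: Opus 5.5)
The plan is to compare formal Laurent expansions in $\mathbb F_p((X))$, exactly as in the proof of Lemma \ref{fprimefplemma}, and to show that the two sides of $f'(X)=X^{-1}g(X^p)$ are supported on disjoint sets of exponents. That forces both sides to vanish.

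\emph{Embedding.} Every rational function over $\mathbb F_p$ has a unique Laurent expansion in $\mathbb F_p((X))$, and this embedding $\mathbb F_p(X)\hookrightarrow \mathbb F_p((X))$ is a field homomorphism. The formal derivative on $\mathbb F_p((X))$, defined termwise by $\big(\sum a_nX^n\big)'=\sum na_nX^{n-1}$, is a derivation. It agrees with the usual derivative on polynomials, so by the quotient rule it agrees with the usual derivative on $\mathbb F_p(X)$. This compatibility is the only point needing care, and I expect it to be the main (though routine) obstacle; it can be checked directly by differentiating the identity $f_-\cdot f=f_+$ in both settings. Similarly, substitution $X\mapsto X^p$ is compatible with the embedding: if $g(X)=\sum_n b_nX^n$, then $g(X^p)=\sum_n b_nX^{pn}$.

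\emph{Support of $f'$.} Write $f(X)=\sum_{n\ge n_0}a_nX^n$, so that $f'(X)=\sum_n na_nX^{n-1}$. The coefficient of $X^{kp-1}$ in $f'$ is $kp\,a_{kp}=0$ in $\mathbb F_p$. Hence $f'$ has no monomials with exponent $\equiv -1 \pmod p$.

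\emph{Support of the right side.} We have $X^{-1}g(X^p)=\sum_n b_nX^{pn-1}$, which is supported entirely on exponents $\equiv -1\pmod p$.

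\emph{Conclusion.} Since $f'(X)=X^{-1}g(X^p)$, every coefficient of this common series lies at an exponent that is both $\equiv -1\pmod p$ and $\not\equiv -1\pmod p$. So all coefficients vanish, and $f'=0$ as a Laurent series. By injectivity of the embedding, $f'=0$ as a rational function. As a byproduct, $g=0$ as well.
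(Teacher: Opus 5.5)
Your proof is correct and is essentially the paper's argument: the paper also compares Laurent expansions and observes that every monomial of $X^{-1}g(X^p)$ has exponent $\equiv -1 \pmod p$, while $f'$ has zero coefficient at every such exponent. Your added checks (that the termwise derivative on $\mathbb F_p((X))$ agrees with the usual derivative on $\mathbb F_p(X)$, and that substituting $X\mapsto X^p$ commutes with the embedding) fill in routine details the paper leaves implicit.
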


\begin{proof} Suppose that $f'(X)=X^{-1}g(X^p)$ and consider the Laurent expansions of both sides.  Every monomial appearing in $X^{-1}g(X^p)$ has an exponent $e \equiv -1$ mod $p$, but all such monomials in $f'(X)$ have zero as a coefficient. Therefore, $f'(X)=0$.
\end{proof}

\begin{proof}[Proof of Proposition \ref{tprop}] The proof is by induction on $k$.
 The case $k=1$ is immediate from Lemmas \ref{Fieldlemma} and \ref{fprimefplemma}. Indeed, if $p|(f'+g'/g)$ then by the first lemma, $p|f'$ and $p|g'$, and by the second, $f(X)\equiv f_0(X^p)$ mod $p$, $g(X) \equiv g_0(X^p)$ mod $p$, for some rational functions $f_0,g_0$. Thus the proposition holds with $u(X)=0$. 
 
Suppose that the proposition, and consequently Corollary \ref{fspcorollary},  hold for $k$, and assume that $f,g$ are such that $p^{k+1}|(f'+g'/g)$. In particular $f$ and $g$ are as given in \eqref{fsp} and \eqref{gsp}, so
\begin{align}
f(X) & = \sum_{i=0}^{k-1} p^if_i(X^{p^{k-i}}) -\log(1+pu(X)) -p^kf_k(X), \label{fsp*}\\
g(X) &= g_0(X^{p^k})(1+pu(X))(1+p^kg_k(X)),\label{gsp*}
\end{align} 
for some rational $f_k(X),g_k(X)$.  Throughout the proof, we view $\log(1+pX)$ as a polynomial in $X$ mod $p^{k+1}$.
 Then
\begin{align*}
0 &\equiv f'(X)+\frac {g'(X)}{g(X)} \equiv p^k\sum_{i=0}^{k-1} X^{p^{k-i}-1}f_i'(X^{p^{k-i}})-p^kf_k'(X)\\
& \quad \quad +p^kX^{p^k-1}\frac {g_0'(X^{p^k})}{g_0(X^{p^k})} +p^k\frac {g_k'(X) }{1+p^kg_k(X)} \pmod{p^{k+1}},
\end{align*}
 yielding
 \begin{align*}
0 \equiv \sum_{i=0}^{k-1} X^{p^{k-i}-1}f_i'(X^{p^{k-i}}) +X^{p^k-1}\frac {g_0'(X^{p^k})}{g_0(X^{p^k})} +g_k'(X)-f_k'(X) \pmod{p},
\end{align*}
 that is,
 $$
 f_k'(X)-g_k'(X) \equiv \sum_{i=1}^{k-1} X^{p^{k-i}-1}f_i'(X^{p^{k-i}}) +X^{p^k-1}\Big(f_0'(X^{p^k})+\frac {g_0'(X^{p^k})}{g_0(X^{p^k})}\Big) \pmod p.
 $$
 By Lemma \ref{DE}, it follows that both sides are zero mod $p$. Applying Lemma \ref{fprimefplemma} to the left-hand side, we have  $f_k(X) =g_{k}(X)-f_{k,0}(X^p)-pf_{k,1}(X)$ for some $f_{k,0},f_{k,1}$.  Dividing the right-hand side by $X^{p-1}$ and putting $Y=X^p$, we get
 \begin{equation*}
 0 \equiv  \sum_{i=1}^{k-1} Y^{p^{k-1-i}-1}f_i'(Y^{p^{k-1-i}}) +Y^{p^{k-1}-1}\Big(f_0'(Y^{p^{k-1}})+\frac {g_0'(Y^{p^{k-1}})}{g_0(Y^{p^{k-1}})}\Big) \pmod p.
 \end{equation*}  
Applying Lemma \ref{DE} again, we see that $f_{k-1}'(Y)\equiv 0$ mod $p$, implying that $f_{k-1}(Y)= f_{k-1,0}(Y^p)+pf_{k-1,1}(Y)$ for some $f_{k-1,0}, f_{k-1,1}$. Dividing the preceding congruence by $Y^{p-1}$, and putting $Z=Y^p$, gives
   $$
   0\equiv  \sum_{i=1}^{k-2} Z^{p^{k-i-2}-1}f_i'(Z^{p^{k-i-2}}) +Z^{p^{k-2}-1}\Big(f_0'(Z^{p^{k-2}})+\frac {g_0'(Z^{p^{k-2}})}{g_0(Z^{p^{k-2}})}\Big) \pmod p.
   $$
    Continuing in this manner $k$ times, we find that
   for $1 \le i \le k-1$, 
   $$
   f_i(X)=f_{i,0}(X^p) + pf_{i,1}(X),
   $$
   for some $f_{i,0}$, $f_{i,1}$. On the final iteration we are left with
   $$
   f_0'(X) +\frac {g_0'(X)}{g(X)} \equiv 0 \pmod p,
   $$
   implying (by the $k=1$ case) that
   \begin{align*}
   f_0(X)&=  -f_{0,0}(X^p)-pf_{0,1}(X),\\
   g_0(X) &= g_{0,0}(X^p)+pg_{0,1}(X),
   \end{align*}
   for some $f_{0,0},f_{0,1},g_{0,0},g_{0,1}$. 
   
Returning to \eqref{fsp*} and \eqref{gsp*} we find that
\begin{align*}
f(X)  
 &= \sum_{i=0}^{k-1} p^i\big(f_{i,0}(X^{p^{k+1-i}})+pf_{i,1}(X^{p^{k-i}})\big) -\log(1+pu(X)) 
 -p^kf_k(X)\\
 &= \sum_{i=0}^{k} p^i\big(f_{i,0}(X^{p^{k+1-i}})+pf_{i,1}(X^{p^{k-i}})\big) -\log(1+pu(X)) 
 -p^kg_k(X)\\
g(X) &= \big(g_{0,0}(X^{p^{k+1}})+pg_{0,1}(X^{p^k})\big)(1+pu(X))(1+p^kg_k(X))\\
& = g_{0,0}(X^{p^{k+1}}) \Big(1+pv(X^{p^k})\Big)(1+pu(X))(1+p^kg_k(X)),
\end{align*} 
where $v(Y)=g_{0,1}(Y)/g_{0,0}(Y^p)$. 
 Define $U(X)$ by
$$
(1+pU(X))= (1+pv(X^{p^k}))(1+pu(X))(1+p^kg_k(X)),
$$
so that
$$
g(X)= g_{0,0}(X^{p^{k+1}})(1+pU(X)),
$$
$$
-\log(1+pu(X))\equiv  -\log(1+pU(X))+p^kg_k(X) +pV(X^{p^k}) \pmod {p^{k+1}},
$$
with  $V(Y)=p^{-1}\log(1+pv(Y))$, and
$$
f(X)\equiv  \sum_{i=0}^{k+1} p^iF_i(X^{p^{k+1-i}}) - \log(1+pU(X)) \pmod {p^{k+1}},
$$
where $F_0(X)= f_{0,0}(X)$, $F_1(X)=f_{0,1}(X)+f_{1,0}(X)+V(X)$, $\dots$, 
$
F_k(X)=f_{k,0}(X^p)+f_{k-1,1}(X^p)+W(X),
$
and $F_{k+1}(X)= f_{k,1}(X)$, completing the proof.

\end{proof}

\section{Conversion of a mixed sum to a pure exponential sum} \label{conversion} 
In this section we  take the first steps towards proving Theorem \ref{maintheorem}.  Let $f,g$ be rational functions over $\mathbb
Z$, not both constants, $p$  an odd prime with $\text{ord}_p(f) \ge 0$ and $\text{ord}_p(g) =0$, $m$ a positive integer with  $m \ge t +2$ and $\chi$ a multiplicative
character mod ${p^m}$. 
  
For any positive integer $u$, with $g(u)$ defined mod $p$ and $p \nmid g(u)$,  let $F_u(Y)$  be the formal power series in $Y$ defined by
\begin{equation} \label{F}
F_u (Y) := c_\chi \log \left( \frac {g(u
+pY)}{g(u)}\right)
+ f(u +pY) -f(u),
\end{equation} 
so that
\begin{equation} \label{Fprime}
F_u'(Y)= p^{t+1} \mathcal C(u+pY),
\end{equation}
where $\mathcal C$ is the critical point function \eqref{cpfunction}.
Write $x=u+p^{m-t-1}v$ with $u$ running from 1 to $p^{m-t-1}$ and
$v$ running from 1 to $p^{t+1}$. Then with $m_1:=m-t-1$, 
\begin{align*}
&S(\chi,g,f,p^m)=\sum_{u=1}^{p^{m_1}}\  \sum_{v=1}^{p^{t+1}} \chi(g(u+p^{m-t-1}v))e_{p^m}(f(u+p^{m-t-1}v))\\
& =\sum_{u=1}^{p^{m_1}} \chi(g(u))e_{p^m}(f(u)) \sum_{v=1}^{p^{t+1}} \chi\left(\frac {g(u+p^{m-t-1}v)}{g(u)}\right)
e_{p^m}\Big(f(u+p^{m-t-1}v)-f(u)\Big)\\
&= \sum_{u=1}^{p^{m_1}} \chi(g(u))e_{p^m}(f(u)) \sum_{v=1}^{p^{t+1}} e_{p^m}\left(c_\chi\log\Big(\frac {g(u+p^{m-t-1}v)}{g(u)}\Big) +f(u+p^{m-t-1}v)-f(u)\right), 
\end{align*}  
the final equality following from  \eqref{plog}.
Thus by the definition of $F_u$ we have
 \begin{equation} \label{Schiconvert}
S(\chi,g,f,p^m)=\sum_{u=1}^{p^{m-t-1}} \chi(g(u))e_{p^m}(f(u)) \sum_{v=1}^{p^{t+1}} e_{p^m}\Big(F_u(p^{m-t-2}v)\Big) .
\end{equation}
Now, by \eqref{Fprime}, $p^{t+1}|F_u'(Y)$, and $p^{t+2}|F_u^{(k)}(Y)$ for $k \ge 2$, and so for $m \ge t+2$, it follows by a Taylor series expansion, that
$$
F_u(p^{m-t-2}v) \equiv F_u(0)+F_u'(0)p^{m-t-2}v  \pmod {p^m}.
$$
Since $F_u(0)=0$ and $F_u'(0) =p^{t+1}\mathcal C(u)$, we get
$$
F_u(p^{m-t-2}v) \equiv p^{m-1} \mathcal C(u)\, v \pmod {p^m}.
$$
Thus, the sum over $v$ in \eqref{Schiconvert} vanishes unless $p|\mathcal C(u)$, that is, $u\equiv \alpha$ mod $p$, for some critical point $\alpha$. In the latter case, the sum over $v$ is equal to $p^{t+1}$. In particular, letting $\mathcal A$ be a set of integer representatives for the set  of critical points \eqref{Adef}, we see that 
\begin{equation} \label{Schi1}
S(\chi,g,f,p^m)=\sum_{\alpha \in \mathcal A} S_\alpha(\chi,g,f,p^m) = p^{t+1} \sum_{\alpha \in \mathcal A}\  \sum_{u\equiv \alpha \ \text{mod}\ p}^{p^{m-t-1}} \chi(g(u))e_{p^m}(f(u)).
\end{equation}

Suppose now that $\alpha\in \mathbb Z$ is a fixed critical point and consider computing $S_\alpha(\chi,g,f,p^m)$.
Write $u=\alpha+py$ with $y$ running from 1 to $p^{m-t-2}$ to get
\begin{align}
S_\alpha(\chi,g,f,p^m)&= p^{t+1} \sum_{y=1}^{p^{m-t-2}}\chi (g(\alpha+py)) e_{p^m}(f(\alpha +py))\notag \\
&=  p^{t+1}\chi(g(\alpha)) e_{p^m}(f(\alpha))
\sum_{y=1}^{p^{m-t-2}} e_{p^m} (F_\alpha(y)). \label{SalphaF}
\end{align}   
Expand
$F_\alpha(Y)$ into a formal power series 
\begin{equation} \label {aj}
F_\alpha (Y)= \sum_{j=1}^\infty a_j Y^j,
\end{equation}
with $p$-adic integer coefficients $a_j$.
Define
\begin{equation}
 \label{sigma}
\sigma:= \text{ord}_p(F_\alpha(Y))= \min_{j\ge 1} \{\text{ord}_p(a_j)\},
\end{equation}
and
\begin{equation} \label{galph}
G_\alpha(Y):=
p^{-\sigma}F_\alpha (Y).
\end{equation}
Then by   \eqref{SalphaF} we have the following conversion of $S_\alpha (\chi,g,f,p^m)$ into a pure exponential sum.
  
\begin{proposition} \label{propconvert} Suppose that $f,g$ are rational functions over $\mathbb Q$, not both constants,  $p$ is an odd prime with $\text{ord}_p(f) \ge 0$ and $\text{ord}_p(g) = 0$, and $m \ge t+2$.  If $\alpha$ is not a critical point for the sum $S(\chi,g,f,p^m)$, then $S_\alpha=0$. If $\alpha$ is a critical point,  then
\begin{equation} 
S_\alpha (\chi,g,f,p^m)= \begin{cases} p^{\sigma -1}\chi(g(\alpha)) e_{p^m}(f(\alpha))
S(G_\alpha,p^{m-\sigma}), & \text{if $m>\sigma$;}\label{sg1}\\ 
 p^{m-1}\chi(g(\alpha)) e_{p^m}(f(\alpha)), & \text{if $m \le \sigma$.}\end{cases}
\end{equation}
where $S(G_\alpha,p^{m-\sigma})= \sum_{y=1}^{p^{m-\sigma}}
e_{p^{m-\sigma}}(G_\alpha (y))$. 
\end{proposition}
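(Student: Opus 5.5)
Essentially everything is already in place from the computation preceding the statement; the proof consists of organizing it. Start from \eqref{Schiconvert}: writing $x=u+p^{m-t-1}v$ and using \eqref{plog} gives the inner sum $\sum_{v=1}^{p^{t+1}} e_{p^m}(F_u(p^{m-t-2}v))$. Use \eqref{Fprime}: $F_u'(Y)=p^{t+1}\mathcal C(u+pY)$. Differentiating this shows $p^{t+1}\mid F_u^{(k)}(Y)\cdot$ (suitable factorial scaling). More precisely, we need that the Taylor coefficients $F_u^{(k)}(0)/k!$ times $p^{k(m-t-2)}$ vanish mod $p^m$ for $k\ge2$.

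This is the one point needing care, and I expect it to be the main (mild) obstacle. From \eqref{Fprime}, the coefficient of $Y^{k}$ in $F_u$ equals $\frac{1}{k}$ times the coefficient of $Y^{k-1}$ in $p^{t+1}\mathcal C(u+pY)$. That latter coefficient is divisible by $p^{t+1+(k-1)}$ since $\mathcal C$ has $p$-integral coefficients. So $a_k$ has valuation at least $t+k-\text{ord}_p(k)$. Multiplying by $p^{k(m-t-2)}$ gives valuation at least $t+k-\text{ord}_p(k)+k(m-t-2)$. For $k\ge2$, $p$ odd and $m\ge t+2$, this is $\ge m$, because $k-\text{ord}_p(k)\ge 2$ for $k\ge 2$ when $p\ge3$. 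Hence $F_u(p^{m-t-2}v)\equiv p^{m-1}\mathcal C(u)v \pmod{p^m}$.

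Summing over $v$ then gives $0$ unless $p\mid \mathcal C(u)$, and $p^{t+1}$ otherwise. Grouping $u$ by residue class mod $p$ yields $S_\alpha=0$ for non-critical $\alpha$, and \eqref{Schi1} for critical $\alpha$.

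For critical $\alpha$, write $u=\alpha+py$ with $1\le y\le p^{m-t-2}$. Pulling out $\chi(g(\alpha))e_{p^m}(f(\alpha))$ via \eqref{plog} gives \eqref{SalphaF}. The summand $e_{p^m}(F_\alpha(y))$ depends only on $y$ mod $p^{m-1}$, since $F_\alpha(Y)$ is a $p$-adically convergent power series in $pY$-type data whose argument enters as $\alpha+py$. Hence we may extend the $y$-sum to a full range $1\le y\le p^{m-1}$ at the cost of dividing by $p^{t+1}$. This gives $S_\alpha=\chi(g(\alpha))e_{p^m}(f(\alpha))\sum_{y=1}^{p^{m-1}}e_{p^m}(F_\alpha(y))$.

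Now write $F_\alpha=p^\sigma G_\alpha$ per \eqref{sigma}, \eqref{galph}. If $m\le\sigma$, every term is $1$ and the sum is $p^{m-1}$, which is the second case. If $m>\sigma$, then $e_{p^m}(p^\sigma G_\alpha(y))=e_{p^{m-\sigma}}(G_\alpha(y))$. This depends only on $y$ mod $p^{m-\sigma}$, because $G_\alpha$ has $p$-integral coefficients and converges $p$-adically. Here $\sigma\ge 1$, since $a_1=p\,F'$-data is divisible by $p^{t+2}$ at a critical point and the higher $a_j$ are divisible by $p$. So $m-\sigma\le m-1$, and the full sum over $y$ mod $p^{m-1}$ equals $p^{\sigma-1}S(G_\alpha,p^{m-\sigma})$, giving the first case.

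The one remaining point to confirm is periodicity of $G_\alpha(y)$ mod $p^{m-\sigma}$. For this I would argue that $G_\alpha(y+p^{m-\sigma}z)-G_\alpha(y)$ has valuation $\ge m-\sigma$ term by term, using $p$-integrality of the coefficients of $G_\alpha$ and convergence.
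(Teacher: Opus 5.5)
Your overall route is the paper's: the splitting $x=u+p^{m-t-1}v$, the Taylor truncation, the collapse of the $v$-sum (giving $S_\alpha=0$ off critical points), then \eqref{SalphaF} and the factorization $F_\alpha=p^\sigma G_\alpha$. Your bound $\text{ord}_p(a_k)\ge t+k-\text{ord}_p(k)$ is a cleaner justification of the truncation than the paper's remark that $p^{t+2}\mid F_u^{(k)}$, because it accounts for the division by $k$.

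There is one step whose justification does not work. You pass from the sum over $1\le y\le p^{m-t-2}$ in \eqref{SalphaF} to a sum over $1\le y\le p^{m-1}$ ``at the cost of dividing by $p^{t+1}$'', citing periodicity of $e_{p^m}(F_\alpha(y))$ modulo $p^{m-1}$. That identity, $\sum_{y=1}^{p^{m-1}}=p^{t+1}\sum_{y=1}^{p^{m-t-2}}$, requires the summand to be periodic modulo $p^{m-t-2}$. Periodicity modulo $p^{m-1}$ is the trivial, weaker property and says nothing about a sum over the shorter range. So the ``hence'' does not follow.

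The identity you want is true, and either of two repairs closes the gap.

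(a) Bypass \eqref{SalphaF} for this part. Substitute $x=\alpha+py$, $1\le y\le p^{m-1}$, directly into the definition \eqref{salph} and use \eqref{plog}. This gives $S_\alpha=\chi(g(\alpha))e_{p^m}(f(\alpha))\sum_{y=1}^{p^{m-1}}e_{p^m}(F_\alpha(y))$ with no criticality assumption. Your remaining argument then correctly yields $p^{\sigma-1}S(G_\alpha,p^{m-\sigma})$ or $p^{m-1}$; it needs only $\sigma\ge1$ and periodicity of $G_\alpha$ mod $p^{m-\sigma}$. In this version the $v$-sum collapse is needed only for the vanishing at non-critical $\alpha$.

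(b) Do what the paper does: keep the range $p^{m-t-2}$ and use $\sigma\ge t+2$. Your own estimates give this at a critical point: $\text{ord}_p(a_1)=\text{ord}_p(p^{t+1}\mathcal C(\alpha))\ge t+2$, and $\text{ord}_p(a_k)\ge t+k-\text{ord}_p(k)\ge t+2$ for $k\ge2$. Then $p^{m-\sigma}\mid p^{m-t-2}$ when $m>\sigma$. The $y$-sum is $p^{\sigma-t-2}$ copies of $S(G_\alpha,p^{m-\sigma})$, or simply $p^{m-t-2}$ when $m\le\sigma$. The prefactor becomes $p^{t+1}p^{\sigma-t-2}=p^{\sigma-1}$, respectively $p^{m-1}$.
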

\noindent

The function $G_\alpha$, defined apriori as an infinite series with $p$-adic
coefficients, may be viewed
as a polynomial over $\mathbb Z$ in the exponential sum $S(G_\alpha,
p^{m-\sigma})$,
since its coefficients are $p$-adic integers and
the high order coefficients all vanish modulo $p^{m-\sigma}$; see \eqref{galph2}. Thus
$S(G_\alpha, p^{m-\sigma})$ is just an ordinary pure exponential sum.

\section{Pure exponential sum bounds}

 We estimate $|S(G_\alpha, p^{m-\sigma})|$ using  the bound for pure exponential sums given in the following theorem. \.
Let $f$ be a nonconstant  polynomial mod $p$ over $\mathbb Z$, and $t=t(f,p)$ be defined by $p^t\|f'(X)$.
Let $\lambda=p^{\frac 2{p+1}}$, $\beta= p^{\frac 1{\sqrt{p}+1}}$ and for $p \ge 17$, let $\{\beta_j\}$ be the  sequence
\begin{equation} \label{bjdef}
\beta_j:= \begin{cases} \beta, & \text{if $1 \le j <\sqrt{p}$};\\
p^{\frac 1{j+1}}, & \text{if $\sqrt{p}<j\le \tfrac {p-3}2$};\\
\lambda, & \text{if $j \ge \tfrac {p-1}2$}.
\end{cases}
\end{equation}

 \begin{theorem} \label{mainthm1} \cite[Theorem 3.1]{cochrane2024} Let $p$ be a prime, $m,d_1$ positive integers, $f(x)$ a polynomial with $\deg_p(p^{-t}f')=d_1$ and $m-t\ge 2$ for $p$ odd, $m-t \ge 3$ for $p=2$. Then
\begin{equation} \label{thm1e}
 |S(f,p^m)| \le \begin{cases} \lambda\, p^{\frac t{d_1+1}}p^{m(1-\frac 1{d_1+1})}, & \text{for $p \le 13$;}\\ 
 \beta_{d_1}\, p^{\frac t{d_1+1}}p^{m(1-\frac 1{d_1+1})}, & \text{for $p \ge 17$.}
 \end{cases}
 \end{equation}  
\end{theorem}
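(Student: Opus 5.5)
The plan is to prove the bound by strong induction on $m$. We decompose $S(f,p^m)$ according to residue classes mod $p$ exactly as in Section \ref{conversion}, specialised to $g=1$ (so $c_\chi$ plays no role). Let $\mathcal C(X)=p^{-t}f'(X)$, of degree $d_1$ mod $p$. Since $m-t\ge 2$, Proposition \ref{propconvert} gives $S_\alpha=0$ unless $\alpha$ is a root of $\mathcal C$ mod $p$. For a critical point $\alpha$ it gives
$$S_\alpha=p^{\sigma-1}e_{p^m}(f(\alpha))\,S(G_\alpha,p^{m-\sigma}),$$
with $G_\alpha=p^{-\sigma}(f(\alpha+pY)-f(\alpha))$.

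The first step is the local analysis at a critical point of multiplicity $\nu$. Write $F_\alpha(Y)=\sum a_jY^j$. Then $a_j=p^j f^{(j)}(\alpha)/j!$, and $p^{-t}f'(\alpha+pY)=p^{-t}f'(\alpha)+\cdots$ has its first unit coefficient at $Y^\nu$. From this I would show two things:
\begin{itemize}
\item $\sigma\ge t+2$, and more precisely $\sigma\le t+\nu+1$, since the term $a_{\nu+1}$ has valuation $t+\nu+1$ up to the factor $\nu+1$, which is a unit when $\nu+1<p$;
\item if $t'$ denotes the $t$-parameter of $G_\alpha$, then $\deg_p(p^{-t'}G_\alpha')\le\nu$ and $t'\le t+\nu+1-\sigma$.
\end{itemize}
This is the standard Hua/Mordell reduction. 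When $\nu+1\ge p$ the factorials $j!$ create extra $p$-divisibility. I expect that to be the source of the $p$-dependent constants $\lambda=p^{2/(p+1)}$ and $\beta$, so I would track $\mathrm{ord}_p(j!)$ carefully there.

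The second step is to apply the induction hypothesis to $S(G_\alpha,p^{m-\sigma})$, with $d_1$ replaced by $\nu$ and $t$ by $t'$. The base cases are $m-\sigma-t'\le 1$, where we use the trivial bound $p^{m-\sigma}$ or the Weil bound (in the form \eqref{weilub2}), together with the exact evaluation $|S_\alpha|=p^{(m+t)/2}$ when $\nu=1$, which comes from a Gauss sum. The exponents should telescope to
$$|S_\alpha|\le c_\nu\, p^{\frac t{\nu+1}}p^{m(1-\frac1{\nu+1})}.$$
This uses $p^{\sigma-1}p^{(m-\sigma)(1-1/(\nu+1))}p^{t'/(\nu+1)}\le p^{t/(\nu+1)}p^{m(1-1/(\nu+1))}$ whenever $\sigma\le t+\nu+1$, and I would check this inequality first.

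The third step, which I expect to be the main obstacle, is summing over the critical points without losing a factor of the number of critical points. The multiplicities satisfy $\sum_\alpha\nu_\alpha\le d_1$. Points with $\nu_\alpha<d_1$ contribute with exponent $1-\frac1{\nu_\alpha+1}$, and we must absorb them into $p^{m(1-\frac1{d_1+1})}$. I would use the constraint $m\ge t+2$ to write each term as
$$p^{\frac t{d_1+1}}p^{m(1-\frac1{d_1+1})}\cdot p^{-(m-t)\left(\frac1{\nu+1}-\frac1{d_1+1}\right)}.$$
Then I would maximise $\sum_\alpha p^{-2(\frac1{\nu_\alpha+1}-\frac1{d_1+1})}$ subject to $\sum\nu_\alpha\le d_1$ and at most $p$ critical points. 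The extremal configurations are one point of full multiplicity, many simple points (at most $\min(d_1,p)$ of them), or multiplicities around $\sqrt p$. Optimising over these should produce exactly the piecewise constants $\beta_j$: the value $\beta$ for $j<\sqrt p$, $p^{1/(j+1)}$ in the middle range, and $\lambda$ when $j\ge (p-1)/2$, since then at most $p$ simple points contribute $p\cdot p^{-(m-t)/2}$. Small primes $p\le 13$ would be handled by checking that $\lambda$ dominates all these cases. The delicate part is making the induction constant $c_\nu$ not accumulate. To that end I would prove the sharper local bound with constant $1$ for $\nu<p-1$ (iterating only in the case of a single critical point), so that only the final summation introduces $\beta_{d_1}$ or $\lambda$.
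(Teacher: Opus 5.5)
The paper gives no proof of this statement. It is quoted from \cite[Theorem 3.1]{cochrane2024} and used as a black box, so your outline can only be judged on its own. Its skeleton is reasonable: Proposition \ref{propconvert} with $g=1$, the relations \eqref{sigmaup} and \eqref{dph}, recursion on $m$, Weil at the bottom, and a sum over critical points. But the device you use to stop the constants from accumulating is false.

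\textbf{The constant-one local bound fails.} The bound $|S_\alpha|\le p^{\frac t{\nu+1}}p^{m(1-\frac1{\nu+1})}$ with constant $1$ for $\nu<p-1$ fails even when every stage has a single critical point. Take $f=X^4$, $p=17$, $m=5$. The only critical point is $0$, with $\nu=3$, $t=0$, $\sigma=4=m-1$, and $S(f,17^5)=17^3\bigl(1+8\cos\frac{2\pi}{17}+8\cos\frac{8\pi}{17}\bigr)\approx 1.099\cdot 17^{15/4}$.

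In this base case the trivial bound gives a local ratio of at most $p^{\frac1{\nu+1}}$, and Weil gives at most $\nu p^{\frac1{\nu+1}-\frac12}$. These meet at $\nu=\sqrt p$ with common value $p^{\frac1{\sqrt p+1}}=\beta$. So a constant of the shape of $\beta$ appears naturally at the bottom of the recursion, not in the final summation and not from factorials. The $1/j$ in $G_\alpha$ only enters through $\mathrm{ord}_p(\deg_p G_\alpha)$ in \eqref{dpg2}, and $p\mid \deg_p G_\alpha$ forces $\nu\ge p-2$.

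Several critical points of $G_\alpha$ break the constant-one claim at the recursive step as well. For $f=X^3-3p^2X$, $p=5$, $m=5$, one has $\sigma=3$ and $G_0=Y^3-3Y$ with simple critical points $\pm1$, giving $|S(f,5^5)|=250\cos\frac{4\pi}{25}\approx 1.025\cdot 5^{10/3}$.

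Hence your induction hypothesis has to be the theorem itself, with constant $\beta_{d'}$ (or $\lambda$) for the reduced sum, where $d'=\deg_p(H_\alpha)\le\nu$. The real work, which the proposal does not do, is to show that these constants reproduce themselves. A single critical point with $d'=\nu=d_1$ and equality in \eqref{sigmaup} passes the constant through unchanged. Every other configuration must be shown to fit under $\beta_{d_1}$: several critical points, $d'<\nu$, strict inequality in \eqref{sigmaup}, or $m-\sigma-\tau\le1$.

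\textbf{The summation step cannot yield $\beta_{d_1}$.} This holds even granting constant-one local bounds. Evaluate each term at the worst case $m-t=2$ and take two critical points of multiplicity $k$, so $d_1=2k$. They give $2p^{-\frac{2k}{(k+1)(2k+1)}}$, which tends to $2$ as $k\to\infty$. Meanwhile $\beta_{d_1}=\lambda\le\sqrt3$ once $d_1\ge\frac{p-1}2$. Already for $p=17$, $k=8$ the sum is about $1.49>\lambda\approx1.37$.

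The local bounds only pay off when $p^{m-t}$ is large. For small $m-t$ you must use the trivial bounds: $|S_\alpha|=p^{m-1}$ when $\sigma\ge m$, and $|S|\le p^m$. For large $m-t$ you need the simplex argument from the proof of Corollary \ref{maincor1}, with the monotonicity of $\delta^{\frac1{j+1}}/j$ from \cite[Lemma 12.1]{cochrane2024}, which makes a single point of full multiplicity extremal. The constants $\beta_j,\lambda$ have to come out of balancing these regimes, not out of an optimisation at $m-t=2$.

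\textbf{The case $p=2$ is missing.} Proposition \ref{propconvert} is only for odd $p$. For $p=2$, where the hypothesis is $m-t\ge3$, you need the mod $4$ splitting $S_\alpha=S_{\alpha,1}+S_{\alpha,2}$ of Proposition \ref{propconvert2} and the relations \eqref{sigmaup2}--\eqref{dph2}. Your outline does not address this.
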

\noindent In particular, since $p^{\frac 2{p+1}} \le \sqrt{3}$ and $p^{\frac 1{\sqrt{p}+1}} \le 13^{\frac {1}{\sqrt{13}+1}}\le 1.75$ for any prime $p$,   we have uniformly under the hypotheses of the theorem,
\begin{equation} \label{pure2}
 |S(f,p^m)| \le 1.75\,  p^{\frac t{d_1+1}}p^{m(1-\frac 1{d_1+1})}.
 \end{equation}
 See also \cite{czpuremix}, \cite[Theorem 2.1]{cz3}, \cite{ding2}, \cite{loh} and
\cite{lv} for related bounds. 
For critical points $\alpha$ of multiplicity one, we have the following more precise result.

\begin{theorem} \label{cp1theorem} \cite[Theorem 2.1]{czpuremix}. Let $\alpha$ be a critical point of multiplicity one for the sum $S(f,p^m)$. 
If $p$ is odd  and $m \ge t+2$, or $p=2$ and $m \ge t+3$ then $|S_\alpha|=p^{\frac {m+t}2}$.  
  If $p$ is odd we have in fact
 $$
S_\alpha (f,p^m)
 =\begin{cases}   e_{p^m}(f(\alpha^*))p^{\frac {m+t}2},
 \quad &\text{if $m-t$ is even;}\\
e_{p^m}(f(\alpha^*)){\left(\tfrac
{A_\alpha}p\right)} \Cal G_p p^{\frac
{m+t-1}2},
\quad &\text{if $m-t$ is odd;} \end{cases}
$$
where $\alpha^*$ is the unique lifting of $\alpha$ to a solution of the
congruence

\noindent
 $p^{-t}f'(x)\equiv 0 \pmod {p^{\lfloor \frac {m+t}2\rfloor}}$, and
$
A_\alpha \equiv 2p^{-t}f''(\alpha) \pmod p .
$
Here $\Cal G_p$ is the quadratic Gauss sum,
\begin{equation} \label{E:1.12}
\Cal G_p := \sum_{x=0}^{p-1} e_p(x^2) = \sum_{x=1}^{p-1} \left(\frac xp\right)
 e_p(x) =
\begin{cases} \sqrt{p} \quad \text{\ if $p
\equiv 1 \pmod 4$,} \\ i\sqrt{p} \quad \text{if $p \equiv 3 \pmod 4$},
\end{cases}
\end{equation}
and ${\left(\tfrac {A_\alpha}p\right)}$ is the Legendre symbol.  

\end{theorem}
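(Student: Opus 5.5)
The plan is to reduce $S_\alpha(f,p^m)$, by the same linearization used in Section \ref{conversion} but carried out at a deeper level, to a single term or a quadratic Gauss sum. Throughout, put $n:=m-t\ge 2$ and $J:=\lfloor n/2\rfloor$.

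First I lift the critical point. The hypothesis that $\alpha$ has multiplicity one means $p^{-t}f'(\alpha)\equiv 0$ and $p^{-t}f''(\alpha)\not\equiv 0 \pmod p$, so $A_\alpha$ is a unit. Hensel's lemma then gives a unique $\alpha^*$ modulo $p^{\lfloor (m+t)/2\rfloor}$ with $\alpha^*\equiv\alpha \pmod p$ and $p^{-t}f'(\alpha^*)\equiv 0$ to that modulus. More generally, for each $j\ge 1$ the residues $u\equiv\alpha \pmod p$ with $p^{-t}f'(u)\equiv 0 \pmod{p^j}$ are exactly $u\equiv\alpha^* \pmod{p^j}$.

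Second, I linearize. Write each $x\equiv\alpha \pmod p$ in $[1,p^m]$ as $x=u+p^{n-J}v$, with $u$ running over residues $\equiv\alpha$ modulo $p^{n-J}$ and $v$ modulo $p^{J+t}$. Expand
$$f(u+p^{n-J}v)=f(u)+f'(u)p^{n-J}v+\sum_{k\ge 2}\frac{f^{(k)}(u)}{k!}p^{k(n-J)}v^k .$$
For $k\ge 2$ one has $f^{(k)}/k! = (f')^{(k-1)}/k!$, so its $p$-adic valuation is at least $t-\text{ord}_p(k)$. Each such term then has valuation at least $t+k(n-J)-\text{ord}_p(k)\ge m$, using $2(n-J)\ge n$ for $k=2$ and $p$ odd for $k\ge 3$.

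I expect this Taylor-remainder bookkeeping to be the main technical point. It is exactly where $p$ odd is needed; for $p=2$ the $k=2$ term loses a factor $2$, which explains the condition $m\ge t+3$ there. I would also verify that the case $p=3$, $k=3$ causes no trouble.

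With only the linear term left, the sum over $v$ equals $p^{J+t}$ if $p^{-t}f'(u)\equiv 0 \pmod{p^J}$ and vanishes otherwise. By the first step this forces $u\equiv\alpha^* \pmod{p^J}$, so $u=\alpha^*+p^Jw$ with $w$ taken modulo $p^{n-2J}$. Hence
$$S_\alpha=p^{J+t}\sum_{w \bmod p^{\,n-2J}} e_{p^m}\big(f(\alpha^*+p^Jw)\big).$$

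Finally I split on the parity of $n$. If $n$ is even, the sum has the single term $w=0$, giving $S_\alpha=e_{p^m}(f(\alpha^*))p^{(m+t)/2}$. If $n$ is odd, $w$ runs modulo $p$, and I expand $f(\alpha^*+p^Jw)$ once more. The linear term vanishes modulo $p^m$ because $\alpha^*$ solves the congruence to level $\lfloor(m+t)/2\rfloor$, so $t+\lfloor (m+t)/2\rfloor+J\ge m$. The cubic and higher terms vanish by the same valuation count as before. The quadratic term gives $e_p\big(2^{-1}p^{-t}f''(\alpha^*)w^2\big)$, since $2J+t=m-1$.

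Completing the square and using $\sum_w e_p(aw^2)=\left(\tfrac ap\right)\Cal G_p$ yields a factor $\left(\tfrac{A_\alpha}{p}\right)\Cal G_p$. Here I absorb the factor $4$ relating $A_\alpha$ to $f''/2$, which is a square. This gives the stated formula with $p^{J+t}=p^{(m+t-1)/2}$. Since $|\Cal G_p|=\sqrt p$, in both cases $|S_\alpha|=p^{(m+t)/2}$.
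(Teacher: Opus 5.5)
You are right that Theorem \ref{cp1theorem} is only quoted in the paper, and your one-step split $x=u+p^{\lceil n/2\rceil}v$ is a legitimate alternative to the paper's own treatment of multiplicity-one points. In the proof of Theorem \ref{maintheorem}, the paper first reduces via Proposition \ref{propconvert} to $S(G_\alpha,p^{m-\sigma})$ with $\sigma=t+2$, $\tau=0$. Your first linearization is correct for odd $p$. The gap is in the last step when $n$ is odd. There the increment is $p^{J}$, not $p^{n-J}$, so the ``same valuation count'' for the cubic term only gives $t+3J-\text{ord}_p(3)$. This equals $m-1$ when $p=3$, $n=3$, $t\ge 1$; this, not your first linearization, is where $p=3$, $k=3$ bites. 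The term $\frac{f'''(\alpha^*)}{6}3^3w^3$ then contributes $e_3\big(\overline 2\,3^{-t}f'''(\alpha^*)\,w^3\big)=e_3\big(\overline 2\,3^{-t}f'''(\alpha^*)\,w\big)$, which is a linear term in $w$. Completing the square then produces an extra cube root of unity. In fact the closed formula is false in this case. For $p=3$, $f=X^3+3X^2$ (so $t=1$), $m=4$, $\alpha=0$, one gets $S_0=9\sum_{y \bmod 3}e_3(y^3+y^2)=9\,(2+e_3(2))$, whereas the formula predicts $9\,\mathcal G_3=9i\sqrt3$. Only $|S_\alpha|=p^{\frac{m+t}2}$ survives, since $\sum_{w \bmod 3}e_3(aw^2+bw)$ has modulus $\sqrt3$ whenever $3\nmid a$. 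This is precisely how the paper handles $p=3$, $\sigma=m-1$ in Case $ii$ of the proof of Theorem \ref{maintheorem}, and Remark \ref{errorremark2} records the oversight.

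Your justification that the linear term vanishes is also wrong: $t+\lfloor\frac{m+t}2\rfloor+J=m+t-1$, which is less than $m$ when $t=0$. Indeed, for $t=0$ and $m$ odd, a solution modulo $p^{\lfloor m/2\rfloor}$ does not determine $e_{p^m}(f(\alpha^*))$. For $f=X^2+pX$, $m=3$, $\alpha=0$, one computes $S_0=p\,e_p(-\overline 4)\,\mathcal G_p$. This agrees with the formula for $\alpha^*=-p\overline 2$ but not for $\alpha^*=0$, although both solve the congruence modulo $p^{\lfloor m/2\rfloor}=p$. The fix is to take $\alpha^*$ solving $p^{-t}f'(x)\equiv 0 \pmod{p^{J+1}}$ (e.g.\ the $p$-adic root). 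After that your argument is complete for odd $p$, outside the case $p=3$, $m=t+3$, $t\ge 1$. Finally, the $p=2$ assertion is not proved at all. By your own bookkeeping, the $k=2$ term has valuation bounded below only by $t-1+2\lceil n/2\rceil$, which is $m-1$ for even $n$. You must therefore use the increment $2^{s}$ with $s=\lceil (n+1)/2\rceil$. This gives $S_\alpha=2^{m-s}\sum_{w}e_{2^m}\big(f(\alpha^*+2^{n-s}w)\big)$, with $\alpha^*$ the $2$-adic root lifting $\alpha$ and $w$ running modulo $2^{2s-n}\in\{2,4\}$. You still have to show that this residual sum has modulus $\sqrt 2$, respectively $2$.
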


\section{Relations between parameters}

Suppose that $\alpha \in \mathbb F_p$ is a critical point of 
multiplicity $\nu \ge 1$, that is, a zero  of \eqref{cpc}
of multiplicity $\nu$.
Develop $\mathcal C$ into a  Taylor series expansion about $\alpha$, say
\begin{equation} \label{cj}
\mathcal C(X) = \sum_{j=0}^\infty c_j(X-\alpha)^j,
\end{equation}
with $p$-adic integer coefficients $c_j$.  Since $\alpha$ is a zero of $\mathcal C$ mod $p$ of multiplicity $\nu$,
\begin{equation}
\text{ord}_p(c_j)>0  \quad \text{for $0\le j <\nu$\quad and \quad }
\text{ord}_p(c_\nu)=0.
\end{equation}
By \eqref{Fprime} we have $F_\alpha'(Y)= p^{t+1}\mathcal C(\alpha+pY)= p^{t+1} \sum_{j=0}^\infty c_j(pY)^j$ and so
\begin{equation} \label{Falph2}
F_\alpha (Y) = p^{t+1} \sum_{j=0}^\infty c_j p^j \frac {Y^{j+1}}{j+1},
\end{equation}
and 
\begin{equation} \label{galph2}
G_\alpha(Y)=
p^{-\sigma}F_\alpha (Y)= p^{-\sigma}\sum_{j=1}^{\infty} a_jY^j
=p^{t-\sigma}\sum_{j=1}^{\infty} \frac {c_{j-1}}j p^jY^j.
\end{equation}

Set
\begin{align}
\label{tau}
&\tau:= \text{ord}_p(G_\alpha'(Y)),\\
\label{H}
&H_\alpha (Y) :=
p^{-\tau}G_\alpha'(Y)=p^{-\tau-\sigma}\sum_{j=1}^{\infty} a_jjY^{j-1}=
p^{t-\tau-\sigma}\sum_{j=1}^{\infty} c_{j-1}p^jY^{j-1},
\end{align}
noting that $H_\alpha(Y)$ has $p$-adic integer coefficients. We readily
obtain the following relationships:
\begin{align}
\label{sigmalow}
&\sigma \ge t+2,\\
\label{sigmaup}
&\sigma \le \nu+1+t-\tau, \\
\label{dpg2}
&\deg_p(G_\alpha) \le \sigma -t +\text{ord}_p(\deg_p(G_\alpha)),\\
\label{dph}
& \deg_p(H_\alpha) \le \sigma +\tau-t-1\le \nu,\\
\label{tauup}
&\tau \le \text{ord}_p(\deg_p(G_\alpha)).
\end{align}
\noindent
The first inequality \eqref{sigmalow} follows from \eqref{Falph2} and the fact
$p|c_0$. Inequalities \eqref{sigmaup} and \eqref{dph}
 follow from the second series
expansion of $H_\alpha$ in \eqref{H}, setting $j=\nu+1$ and
$j=\deg_p(H_\alpha)+1$ respectively, while  inequality
\eqref{dpg2} follows from the second series expansions of $G_\alpha$ in
\eqref{galph2}, setting
$j=\deg_p(G_\alpha)$. Finally, to obtain \eqref{tauup}, set $j=\deg_p(G_\alpha)$
and note that by definition $\text{ord}_p(a_j)=\sigma$. Then, since $p^\tau| G_\alpha'(Y)$, we have
$$
\tau \le \text{ord}_p(a_jj)-\sigma = \text{ord}_p(j).
$$

\section{Proof of Theorem \ref{maintheorem}}

Let $\sigma, \tau$ and $t$ be as defined in \eqref{sigma}, \eqref{tau} and \eqref{t}.
Suppose that $m \ge t+2$. We already saw (Proposition \ref{propconvert})
 that if $\alpha$ is not a critical point, then $S_\alpha=0$.
Suppose now 
that $\alpha$ is a critical point of
multiplicity $\nu \ge 1$.  We prove parts $(ii)$ and $(iii)$ of the theorem simultaneously by considering different cases.  If $\nu=1$ then from \eqref{sigmalow} and \eqref{sigmaup}  we have $t+2 \le \sigma \le t+2-\tau$ and so $\tau=0$ and $\sigma=t+2$. For such critical points we are able to evaluate $S_\alpha$ explicitly as indicated in the proof.

Case $i$. Suppose first that $\sigma \ge m$.
 Then by Proposition \ref{propconvert}, 
$$
|S_\alpha|= p^{m-1} = p^{\frac {m-\nu-1}{\nu+1}}p^{m(1-\frac 1{\nu+1})} \le 
p^{\frac {\sigma-\nu-1}{\nu+1}}p^{m(1-\frac 1{\nu+1})}\le 
p^{\frac {t}{\nu+1}}p^{m(1-\frac 1{\nu+1})},
$$  
the last inequality following from \eqref{sigmaup}. 

If in addition we have $\nu=1$, so that $\sigma=t+2$, then, $t+2 \le m \le \sigma=t+2$ and so $m=t+2$. By Proposition \ref{propconvert},  it follows that
\begin{equation} \label{nu1casei}
S_\alpha = p^{m-1}\chi(g(\alpha))e_{p^m}(f(\alpha)) = p^{\frac {m+t}2}\chi(g(\alpha))e_{p^m}(f(\alpha)).
\end{equation}

Case $ii.$  Suppose next that $\sigma =m-1$.
We have trivially
$$
|S_\alpha|\le p^{m-1}\le 1.75\,  p^{\frac t{\nu+1}}p^{m(1-\frac 1{\nu+1})},
$$
provided that $p^{\frac {m-t}{\nu+1}-1} \le 1.75$, that is $p^{\frac {\sigma-t-\nu}{\nu+1}} \le 1.75$.  By \eqref{sigmaup}, it suffices to have $p^{1-\tau} \le 1.75^{\nu+1}$, which is the case if $\tau \ge 1$ or $\tau=0$ and $p \le 1.75^{\nu+1}$. Suppose now that
 $\tau =0$ and $p>1.75^{\nu+1}$.
Let
$d_p=\deg_p(G_\alpha) \ge 1$.
By \eqref{dpg2} and \eqref{sigmaup} we have
\begin{equation} \label{E:3.3}
d_p \le \nu +1 +\text{ord}_p(d_p).
\end{equation}
Suppose that $\text{ord}_p(d_p) \ge 1$.  If $d_p=p$ then by \eqref{E:3.3}
$p\le
\nu+2$, contradicting our assumption that $p >1.75^{\nu+1}$.
Otherwise $d_p \ge 2p$ and thus since $\text{ord}_p(d_p) \le d_p/2$ we have by
\eqref{E:3.3} that
$$
p \le \tfrac 12 d_p \le d_p - \text{ord}_p(d_p) \le \nu +1,
$$
again contradicting  $p >1.75^{\nu+1}$.
Thus we must have $\text{ord}_p(d_p) = 0$ and so by \eqref{E:3.3}, $d_p \le
\nu+1$. It follows from Proposition \ref{propconvert} and 
the mod $p$ upper bound \eqref{weilub2}  that
\begin{align}
|S_\alpha| &= p^{\sigma -1}|S(G_\alpha,p)|\le 1.75\ p^{\sigma -\frac 1{d_p}}
\le 1.75\, p^{m-1-\frac 1{\nu+1}} \notag \\
&= 1.75\, p^{\frac t{\nu+1} }p^{m(1-\frac 1{\nu+1})}p^{\frac {\sigma
-\nu-1-t}{\nu+1}}. \notag
\end{align}
The bound \eqref{mainlocalub} now follows from \eqref{sigmaup}.

If $\nu=1$, we can be more precise. The prime $p=3$ will need special attention because of the third order term with denominator 3 in the expansion of $G_\alpha$. We deal with it in the next paragraph.  As noted above, $\tau=0$ and $\sigma=t+2$. We  get from \eqref{galph2} that for $p>3$, $d_p(G_\alpha)=2$, and so $S(G_\alpha,p)$ is just a quadratic Gauss sum.  Thus
\begin{align*}
|S_\alpha| &= p^{\sigma -1}|S(G_\alpha,p)| = p^{\sigma-1} \sqrt{p}= p^{\frac t2}p^{\frac m2}.
\end{align*}
Let $\alpha^*$ be a lifting of $\alpha$ to a solution of $\mathcal C(x) \equiv 0$ mod $p^2$ so that $c_0\equiv 0$ mod $p^2$.  Then in the expansion of $G_\alpha(Y)$ in \eqref{galph2} we have for $p>3$, 
$
G_\alpha(Y) \equiv  \overline 2 c_1 Y^2\pmod p,
$
where $c_1=\mathcal C'(\alpha^*)$.  Evaluating the Gauss sum we get
$S(G_\alpha,p)= \chi_2(2c_1) \mathcal G_p$, and thus by Proposition \ref{propconvert}, $S_\alpha=p^{\frac {m+t-1}2}\chi(g(\alpha^*))e_{p^m}(f(\alpha^*))\chi_2(2c_1) \mathcal G_p$, and $|S_\alpha|=p^{\frac {m+t}2}$.

If $p=3$, then $G_\alpha(Y)=\overline 2 c_1Y^2 +c_2Y^3$. Noting that $y^3 \equiv y$ mod $3$ for all $y \in \mathbb Z$, we have $S(G_\alpha,3)= S(c_2Y+\overline 2 c_1Y^2,3)$ and so again one can write down an explicit evaluation in terms of a quadratic Gauss sum.  For our purposes we will simply note that $|S(G_\alpha,3)| =\sqrt{3}$, which again yields $|S_\alpha|=p^{\frac {m+t}2}$.

Case $iii$. Suppose that $m-1-\tau \le \sigma \le m-2$.
In particular, we must have $\tau \ge 1$ and consequently $\nu \ge 2$ as noted in the opening paragraph.  Then we have the trivial
estimate,
$$
|S_\alpha|\le p^{m-1}=p^{\frac {m-\nu-1}{\nu+1}}p^{m(1-\frac 1{\nu+1})}
$$
\begin{equation} \label{E:3.4}
\le p^{\frac 1{\nu+1}} p^{\frac {\sigma +\tau -\nu-1}
{\nu+1}}p^{m(1-\frac 1{\nu+1})}
\le 
p^{\frac 1{\nu+1}} p^{\frac t
{\nu+1}}p^{m(1-\frac 1{\nu+1})},
\end{equation}
the latter inequality following from \eqref{sigmaup}. Again, put $d_p=\deg_p(G_\alpha) \ge 1$. Since $\tau \ge 1$ we get from \eqref{tauup} that $\text{ord}_p(d_p) \ge 1$.
By \eqref{dpg2} and \eqref{sigmaup}, $d_p \le \nu+1-\tau+ \text{ord}_p(d_p)$,
and so
$$
p-1 \le p^{\text{ord}_p(d_p)}-\text{ord}_p(d_p)\le d_p-\text{ord}_p(d_p)\le \nu
+1-\tau \le \nu.
$$
Thus $p^{\frac 1{\nu+1}} \le (\nu+1)^{\frac 1{\nu+1}} \le 3^{1/3}<1.75$
and so \eqref{mainlocalub}
follows from \eqref{E:3.4}.

Case $iv$. Suppose finally that $\sigma \le m-2-\tau$, that is, $m-\sigma \ge \tau+2$. 
In this case we  can apply Theorem \ref{mainthm1}  to
$S(G_\alpha,p^{m-\sigma})$ and
obtain from \eqref{pure2} with $d_1:=\deg_p(H_\alpha)=\deg_p(p^{-\tau}G_\alpha')$, 
$$
|S_\alpha| =p^{\sigma -1} |S(G_\alpha,p^{m-\sigma})| \le
1.75\, p^{\sigma
-1} p^{\frac {\tau}{d_1+1}}p^{(m-\sigma)(1-\frac
1{d_1+1})}.
$$
Now  by \eqref{dph},
$d_1=\deg_p(H_\alpha)\le \nu$ and thus since $m-\sigma -\tau >0$ we obtain
$$
|S_\alpha|\le 1.75\, p^{\sigma-1}p^{\frac
{\tau}{\nu+1}}p^{(m-\sigma)(1-\frac
1{\nu+1})}\le 1.75\, 
 p^{\frac {\tau+\sigma -\nu-1}{\nu+1}} p^{m(1-\frac
1{\nu+1})}.
$$ 
Inequality \eqref{mainlocalub} follows from \eqref{sigmaup}.

If $\nu=1$, then as noted in the opening paragraph,  $\sigma=t+2$ and $\tau=0$. Thus $m \ge \sigma+2 =t+4$ and the first order term of $H_\alpha(Y)$ is $p^{t-\tau-\sigma}c_1p^2 Y= c_1Y$, with $p \nmid c_1$,  so $d_p(H_\alpha)=1$. There is a single critical point $\alpha$ of multiplicity one for the sum $S(G_\alpha, p^{m-\sigma})$.  If $\alpha^*$ is a lifting of $\alpha$ to a solution of $\mathcal C(x) \equiv 0$ mod $p^2$, so that $c_0\equiv 0$ mod $p^2$, then   $H_\alpha(Y) \equiv  c_1 Y$ mod $p$, with $p \nmid c_1$, and
the critical point is just $y=0$. 
 Thus for $m \ge t+4$, noting that $G_\alpha(0)=0$, we get from Theorem \ref{cp1theorem},
$$ 
S(G_\alpha,p^{m-\sigma})= \begin{cases} p^{\frac {m-\sigma}2}, & \text{if $m-\sigma$ is even;}\\
 \chi_2(A_\alpha)\mathcal G_p p^{\frac {m-\sigma-1}2}, & \text{if $m-\sigma$ is odd;} \end{cases}
$$  
where $A_\alpha \equiv 2  G_\alpha''(0)\equiv 2 c_1 \equiv  2\mathcal C'(\alpha^*)$ mod $p$. It follows from Proposition \ref{propconvert} that 
$$
S_\alpha(\chi,g,f,p^m) =\begin{cases} \chi(g(\alpha^*)) e_{p^m}(f(\alpha^*)) p^{\frac {m+t}2}, & \text{if $m-\sigma$ is even;}\\
 \chi(g(\alpha^*)) e_{p^m}(f(\alpha^*)) \chi_2(A_\alpha)\mathcal G_p p^{\frac {m+t-1}2}, & \text{if $m-\sigma$ is odd.} \end{cases}
$$
 
 \begin{remark} \label{errorremark2} The special consideration needed when $p=3$ and $\sigma=m-1$, case $(ii)$ above, was overlooked in the formula of \cite[Theorem 1.1]{cochrane2002}. The formula should be modified as indicated in the proof above.
 \end{remark}
 
\section{Proof of Corollary \ref{maincor1} }
Put $d=\deg_p(\mathcal C_+)$, $S=S(\chi,g,f,p^m)$. Assume that $m \ge t+2$. 
  For $1 \le j \le d-1$, let $n_j$ denote the number of critical points of multiplicity $j$ and put $x_j:=jn_j$, so that $\sum_{j=1}^{d}x_j\le d$. Put $\delta := p^{t-m}$. 
Then by Theorem \ref{maintheorem} $(iii)$
we have 
\begin{align*}
|S(\chi,g,f,p^m)| \le1.75\, \sum_{j=1}^{d} n_j  p^{\frac t{j+1}}p^{m(1-\frac 1{j+1})} = 1.75\, p^m \sum_{j=1}^{d} x_j \delta^{\frac 1{j+1}}/j.
\end{align*}
View the latter sum as a linear function in the $x_j$ subject to the constraint $\sum_{j=1}^d x_j \le d$.  The maximum value of the linear function over  the simplex occurs at one of the vertex points having one coordinate equal to $d$ and the others all equal to zero, and so $|S| \le 1.75 d p^m \max_{1 \le j \le d} \delta^{\frac 1{j+1}}/j$.
 Now the sequence $\{\delta^{\frac 1{j+1}}/j\}_{j=1}^d$ is increasing provided that $\delta  \le (1+\frac 1{d-1})^{-d(d+1)}$ as one can verify by computing the ratio of successive terms (see \cite[Lemma 12.1]{cochrane2024}).  For such $\delta$ we conclude that
$$
|S(\chi,g,f,p^m)| \le 1.75\, p^m \delta^{\frac 1{d+1}} = 1.75\, p^{\frac t{d+1}}p^{m(1-\frac 1{d+1})},
$$
as desired.  For $\delta > (1+\frac 1{d-1})^{-d(d+1)}$ we have $p^{\frac {m-t}{d+1}} < (1+\frac 1{d-1})^d <3$ for $d \ge 6$, and so  
$$
|S(\chi,g,f,p^m)| \le p^m \le  3 \, p^{\frac t{d+1}}p^{m(1-\frac 1{d+1})}.
$$
 
We are left with the case where $1 \le d \le 5$ .  If there is a single critical point $\alpha$ of multiplicity $d$, then by Theorem \ref{maintheorem} $(iii)$,
$
|S|=|S_\alpha|\le 1.75\, p^{\frac t{d+1}} p^{m(1-\frac 1{d+1})},
$
as desired. If all critical points have multiplicity one, then by part $(ii)$ of Theorem \ref{maintheorem},
$
|S| \le d p^{\frac {m+t}2} \le 3\, p^{\frac t{d+1}} p^{m(1-\frac 1{d+1})},
$
provided that $\frac d3 \le p^{(m-t) \frac {d-1}{2d+2}}$. Since $m-t \ge 2$ it suffices to have $\frac d3 \le p^{\frac {d-1}{d+1}}$, which is the case for $p \ge 3$ and $d \le 5$. The corollary is now established for $d=1$ and $d=2$. We turn to the remaining cases for $d=3,4$ and 5.

Suppose that $d=3$ and that there is
 one critical point of multiplicity 2 and one of multiplicity 1. Then $|S| \le 1.75\, p^{\frac t3}p^{\frac 23 m}+ p^{\frac t2 +\frac m2}<2.75\,  p^{\frac t4}p^{\frac 34 m}$, as desired since $\frac t3+\frac 23 m \le \frac t4 +\frac 34 m$ and $\frac t2+\frac m2 \le \frac t4 +\frac 34 m$ for $m \ge t$.
 
   For $d=4$, consider the case of two critical points of multiplicity 2, where $|S| \le 2\cdot 1.75\, p^{\frac t3} p^{m(1-\frac 13)} \le 3\, p^{\frac t5} p^{m(1-\frac 15)}$, since $p^{m-t} \ge p^2 \ge (3.5/3)^{\frac {15}2}$; next if there is one critical point of multiplicity 1 and one of multiplicity 3, then $|S| \le p^{\frac {m+t}2} +1.75\, p^{\frac t4+\frac 34 m}\le 2.75\, p^{\frac t5+\frac 45 m}$; finally if there is one critical point of multiplicity 2 and two of multiplicity 1, then $|S| \le 2p^{\frac{m+t}2} + 1.75\, p^{\frac t3+\frac 23 m} \le  3 p^{\frac t5+\frac 45 m}$, since $m-t \ge 2$ and $\frac 23 p^{-\frac 35}+\frac {1.75}3 p^{-\frac 4{15}}<1$.

   Suppose finally that $d=5$. For the case of one critical point of multiplicity 4 and one of multiplicity 1 we have $|S|\le p^{\frac {m+t}2}+ 1.75 \, p^{\frac t5+ \frac 45m} <2.75\, p^{\frac t6}p^{\frac 56 m}$;
 for one point of multiplicity 3 and one of multiplicity 2, $|S| \le 1.75\, p^{\frac t4+\frac 34 m}+1.75\, p^{\frac t3+\frac 23 m}< 3\, p^{\frac t6}p^{\frac 56 m}$, since $\frac {1.75}{3}(p^{-\frac 16}+p^{-\frac 13})<1$; for two points of multiplicity 2 and one of multiplicity 1, we have $|S| \le  p^{\frac t2+\frac m2} + 3.5\ p^{\frac t3 +\frac 23 m}< 3 p^{\frac t6+\frac 56 m}$, for $p \ge 3$, since $\frac 13 p^{-\frac 23}+\frac {3.5}3 p^{-\frac 13}<1$; for one point of multiplicity 2 and three of multiplicity 1, we have $|S| \le 3p^{\frac {m+t}2}+1.75\, p^{\frac {t}3+\frac 23 m}< 3 p^{\frac t6+\frac 56 m}$, for $p \ge 3$, since $ p^{-\frac 23}+\frac {1.75}3 p^{-\frac 13}<1$ for $p \ge 3$.

\section{Proof of Corollary \ref{m2}}
Let $f,g$ be rational functions over $\mathbb Q$, not both constants, and  $\mathcal C= {\mathcal C_+}/\mathcal C_-$ denote the critical point function associated with $S(\chi,g,f,p^m)$.
Let $Z_1=\mathcal Z(f_-)$, the number of distinct zeros of $f_-$ in $\mathbb C$, $Z_2=\mathcal Z(g_+g_-)$, $Z_3$ the number of common zeros of $f_-$ and $g_+g_-$, and $D$ be as defined in the introduction,
\begin{equation} 
 D:= \deg(f) + \mathcal Z(f_-g_+g_-)= \deg(f)+ Z_1+Z_2-Z_3.
 \end{equation}

\begin{lemma}  \label{CDub} For any rational functions $f,g$ over $\mathbb Q$, not both constants,  prime power $p^m$, and character $\chi$ mod $p^m$,  we have
  \begin{align}\label{Cxdegub}   \deg(\mathcal C_+) &\le  D-1,\\
   \deg(\mathcal C_-) &\le \deg(f_-)+Z_1+Z_2-Z_3 \le D.
\end{align}
 In particular, the total number of critical points counted with multiplicity is at most $D-1$. 
 \end{lemma}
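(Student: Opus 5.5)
The plan is to write $f'+c_\chi g'/g$ over an explicit common denominator and count degrees. Write $f=f_+/f_-$ and $g=g_+/g_-$. Then
$$
f'=\frac{f_+'f_- - f_+f_-'}{f_-^2},\qquad \frac{g'}{g}=\frac{g_+'}{g_+}-\frac{g_-'}{g_-}.
$$
Using the standard reduction for logarithmic derivatives, $g_+'/g_+=\sum_\rho e_\rho/(X-\rho)$ over the distinct complex roots $\rho$ of $g_+$, and similarly for $g_-$. Hence $g'/g=A(X)/R_2(X)$, where $R_2$ is the radical of $g_+g_-$ (degree $Z_2$) and $\deg A\le Z_2-1$, since $g'/g$ vanishes at infinity to order at least one.

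For $f'$, use partial fractions over $\mathbb C$. Write $f=P+\sum_\rho\sum_k b_{\rho,k}(X-\rho)^{-k}$, where $P$ is a polynomial of degree at most $\deg(f_+)-\deg(f_-)$ (zero if this is negative) and the poles $\rho$ run over the $Z_1$ distinct roots of $f_-$ with orders $k\le e_\rho$. Then
$$
f'=P'+\sum_{\rho}(\text{terms with poles of order}\le e_\rho+1\text{ at }\rho),
$$
so $f'=B/(f_- R_1)$ with $R_1$ the radical of $f_-$, and the denominator has degree $\deg(f_-)+Z_1$.

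Now take the common denominator $Q:=f_-R_1R_2/\gcd(R_1,R_2)$, of degree $\deg(f_-)+Z_1+Z_2-Z_3$. This gives the bound on $\deg(\mathcal C_-)$ after reduction, since $\mathcal C$ differs from $f'+c_\chi g'/g$ only by the scalar $p^{-t}$, and reducing can only lower degrees. This is at most $D$ because $\deg(f_-)\le\deg(f)$.

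For the numerator $N=Q\cdot(f'+c_\chi g'/g)$, the key is the behaviour at infinity: $\deg N=\deg Q+\operatorname{ord}_{\infty}$-growth of $f'+c_\chi g'/g$. Two cases arise.
\begin{itemize}
\item If $\deg f_+>\deg f_-$, then $f'$ grows like $X^{\deg f_+-\deg f_--1}$ while $g'/g=O(X^{-1})$. So $\deg N\le \deg Q+\deg f_+-\deg f_--1=\deg f_++Z_1+Z_2-Z_3-1=D-1$.
\item If $\deg f_+\le\deg f_-$, then $f'$ and $g'/g$ are both $O(X^{-1})$ (indeed $f'=O(X^{-2})$). So $\deg N\le\deg Q-1\le D-1$.
\end{itemize}
Dividing by the gcd and by $p^{t}$ preserves these bounds, giving $\deg(\mathcal C_+)\le D-1$. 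The statement also covers degree mod $p$, since $\deg_p\le\deg$. The main subtlety is that these degree bounds must be taken for integer polynomials representing $\mathcal C_\pm$. Because everything is a rational-function identity over $\mathbb Q$, with $c_\chi$ an integer constant, the reduced form over $\mathbb Q$ has the same degrees after clearing denominators.

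Finally, the critical points are zeros mod $p$ of $\mathcal C$. Since $\mathcal C_-$ and $\mathcal C_+$ are coprime, and we only consider $x$ with $p\nmid f_-g_+g_-$, they are zeros of $\mathcal C_+$ mod $p$. Here $\mathcal C_+\not\equiv0$ mod $p$ by the definition of $t$, so their number with multiplicity is at most $\deg_p(\mathcal C_+)\le D-1$. The step I expect to need the most care is the cancellation in $Q$ at common zeros of $f_-$ and $g_+g_-$. There one must check that the pole order of $f'+c_\chi g'/g$ at such $\rho$ is still at most $e_\rho+1$, which holds because $g'/g$ contributes only a simple pole. This justifies subtracting $Z_3$.
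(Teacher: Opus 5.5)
Your proposal is correct and is essentially the paper's own proof: both put $f'$ over $f_-\prod(X-\beta_i)$ and $g'/g$ over the radical of $g_+g_-$, take the common denominator of degree $\deg(f_-)+Z_1+Z_2-Z_3$, and bound the numerator. Your case split on $\deg f_+$ versus $\deg f_-$ via growth at infinity is just the paper's bound $\max\{\deg(f_+)+Z_1-1+Z_2-Z_3,\ \deg(f_-)+Z_1+Z_2-1-Z_3\}$ in another form. For the final count of critical points, which the paper leaves implicit, the operative fact is not coprimality of $\mathcal C_\pm$ over $\mathbb Q$; it is that $\mathcal C_-$ divides $Q$, so (using $\text{ord}_p(\mathcal C_-)=0$ and Gauss's lemma) $\mathcal C_-(x)$ is a $p$-adic unit whenever $p\nmid f_-(x)g_+(x)g_-(x)$.
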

 
\begin{proof} 
  Say $f=f_+/f_-$ and $g=g_+/g_-$ have  factorizations over $\mathbb C$,
$$
f_+(X)= c_+ \prod_{i=1}^{Z_0}(X-\alpha_i)^{e_i}, \quad  f_-(X)=c_-\prod_{i=1}^{Z_1}(X-\beta_i)^{f_i},  \quad  
 g(X)=c_g\prod_{i=1}^{Z_2} (X-\gamma_i)^{g_i}
 $$
 for some constants $c_+,c_-,c_g$, positive integers $e_i,f_i$, and nonzero integers $g_i$.    Then
\begin{align*}
f'(X)&= \frac {f_+(X)}{f_-(X)}\Big( \sum_{i=1}^{Z_0}\frac {e_i}{X-\alpha_i} - \sum_{i=1}^{Z_1} \frac {f_i}{X-\beta_i}\Big)\\
&=\frac {\prod_{i=1}^{Z_1}(X-\beta_i)\Big(\sum_{i=1}^{Z_0} e_i f_+(X)/(X-\alpha_i) - f_+(X) \sum_{i=1}^{Z_1}f_i /(X-\beta_i)\Big)}{f_-(X) \prod_{i=1}^{Z_1} (X-\beta_i)}.
\end{align*}  
 The numerator of the latter expression is a polynomial of degree at most
$\deg(f_+)+Z_1-1$. Similarly,
$
\frac {g'(X)}{g(X)}=  \sum_{i=1}^{Z_2} \frac {g_i}{X-\gamma_i},
$
a rational function with denominator of  degree $Z_2$ and numerator of degree $Z_2-1$. Letting $Z_3$ denote the number of $\beta_i$ that are also zeros or poles of $g(X)$, we see that the least common denominator of $f'$ and $g'/g$ has degree at most $\deg(f_-)+Z_1+Z_2-Z_3$, and that the numerator of the sum $f'+c_\chi g'/g$ has degree at most
$$
\max\{\deg(f_+)+Z_1-1+Z_2-Z_3, \ \deg(f_-)+Z_1+Z_2-1-Z_3\}. 
$$
Thus 
$
\deg(\mathcal C_+) \le \deg(f)+Z_1+Z_2 -Z_3-1,
$
 as desired. 
 \end{proof}

 We turn now to the proof of Corollary \ref{m2}. Let $f,g$ be rational functions, not both constants.  For $m \le t+1$, we  have trivially
 $$
|S(\chi,g,f,p^m)| \le p^{m}=p^{\frac mD} p^{m(1-\frac 1{D})}\le p^{\frac {t+1}D}p^{m(1-\frac 1D)} .
$$
For $m\ge t+2$ the result is immediate from Corollary \ref{maincor1} and the bound $\deg_p(\mathcal C_+) \le D-1$ of Lemma \ref{CDub}.

\section{Proof of Theorem \ref{maintheorem0} and Corollary \ref{maincor} for odd $p$} \label{theorem0proofsec}
   
Let $f,g$ be rational functions over $\mathbb Z$,  not both constants (so that $D \ge 1$),  
$p$ an odd prime,
$m$ a positive integer and
$\chi$ a multiplicative character mod $p^m$ such that $S(\chi,g,f,p^m)$ is non-degenerate.
If $m=1$, then as noted earlier we have the stronger upper bound \eqref{weilub2}.

 ($i$) Suppose now that $m \ge 2$ and that
$\deg_p(f)\ge 1$.
  Then by  
Corollary \ref{tboundcor}, $p^t \le \deg_p(f)\le D$. If $2 \le m  \le t+1$ then $t \ge 1$,  $p^{t+1} \le p^{2t} \le D^2$ and so we have trivially
\begin{equation} \label{trivialtm}
|S(\chi,g,f,p^m)| \le p^{\frac mD} p^{m(1-\frac 1{D})}\le p^{\frac {t+1}D}  p^{m(1-\frac 1D)} \le D^{\frac 2D} p^{m(1-\frac 1D)}<2.1\, p^{m(1-\frac 1D)}.
\end{equation}
Finally, if $m \ge t+2$, then by Corollary \ref{m2}, 
\begin{align} \label{nontrivial}
|S(\chi,g,f,p^m)| &\le 3\, p^{\frac tD}p^{m(1-\frac 1D)} \le 3\, (\deg_p(f))^{\frac 1D} p^{m(1-\frac 1{D})} .
\end{align}

($ii$) Suppose that $m \ge 2$, $\chi$ is primitive ($t_\chi=0$) and $\deg_p(g) \ge 1$.  Then by Corollary \ref{tboundcor}, $p^{t} \le \deg_p(g)$.  Unfortunately, we cannot bound $\deg_p(g)$ in terms of $D$ since $D$ just depends on the number of zeros and poles of $g$, not their multiplicities.  
Thus proceeding as above, and using $p^{t+1} \le p^{2t}$,  \eqref{trivialtm} becomes
\begin{equation} \label{sim1}
|S(\chi,g,f,p^m)| \le p^{\frac {2t}D} p^{m(1-\frac 1D)} \le \deg_p(g)^{\frac 2D} p^{m(1-\frac 1D)},
\end{equation} 
and \eqref{nontrivial},
\begin{align} \label{sim2} 
|S(\chi,g,f,p^m)| &\le 3\, p^{\frac tD}p^{m(1-\frac 1D)} \le 3\, (\deg_p(g))^{\frac 1D} p^{m(1-\frac 1{D})} .
\end{align}

\begin{proof}[Proof of 
Corollary \ref{maincor}]  If $\deg_p(f) \ge 1$, then since $\deg_p(f) \le D$ and  $D^{\frac 1D} \le 3^{\frac 13}$ we have
$$
|S(\chi,g,f,p^m)| \le 3 \deg_p(f)^{\frac 1D} p^{m(1-\frac 1D)} \le 3^{\frac 43} p^{m(1-\frac 1D)}.
$$
If $\deg_p(g) \ge 1$, then $p^t \le \deg_p(g) \le \Delta$. For $2 \le m \le t+1$ we obtain as in \eqref{trivialtm}, 
\begin{equation*} 
|S(\chi,g,f,p^m)| \le p^{\frac {m}\Delta} p^{m(1-\frac 1\Delta)} \le p^{\frac {2t}\Delta} p^{m(1-\frac 1{\Delta})} \le \Delta^{\frac 2{\Delta}} p^{m(1-\frac 1\Delta)}\le 3^{\frac 23} p^{m(1-\frac 1\Delta)}.
\end{equation*} 
For $m \ge t+2$ we obtain as in \eqref{nontrivial},
\begin{align*} 
|S(\chi,g,f,p^m)| &\le 3\, p^{\frac tD}p^{m(1-\frac 1D)} 
\le 3\, p^{\frac t\Delta}p^{m(1-\frac 1\Delta)}
\le 3\, \Delta^{\frac 1\Delta} p^{m(1-\frac 1\Delta)} \le 3^{\frac 43}\, p^{m(1-\frac 1\Delta)}.
\end{align*}

\end{proof}

\section{Degenerate Sums, Proof of Theorem \ref{degentheorem}}
 \label{degensection}

 Let $f,g$ be rational functions over $\mathbb Q$, $p^m$ an odd prime power with $m \ge 2$ and $\chi$ a multiplicative character mod $p^m$ with $c=c_\chi$ and $p^{t_\chi}\|c$. Suppose that $\chi(g(x))e_{p^m}(f(x))$ is not a constant function on its domain. By translation if necessary we may assume that $g(0)$ is defined mod $p$ and that $p\nmid g(0)$.
Write
\begin{equation} \label{FGdef} 
f(X)=f(0)+p^{\ell_f}F(X), \quad \quad g(X)= g(0)(1+p^{\ell_g}G(X)),
\end{equation}
for some rational functions $F,G$ and non-negative integers $\ell_f, \ell_g$ with $p \nmid FG$.  For a degenerate sum we must have $\deg_p(f)=0$  and so $\ell_f>0$, as well as have $\ell_g+t_\chi>0$.  If $f(X)$ is not a constant polynomial mod $p^m$, then $\ell_f<m$, $F(X) \neq 0$, and $F(0)=0$. If $f(X)$ is a constant polynomial mod $p^m$, then (to avoid case studies) we take $\ell_f=m$, $F(X)=X$ so that in all cases $F(X) \neq 0$ and $X|F(X)$.
We do likewise for $g$, so $G(X) \neq 0$ and $X|G(X)$ in all cases.
In particular we may assume that $\deg_p(F) \ge 1$ and $\deg_p(G) \ge 1$.

\subsection{The case $\ell_g=0$} In this case, $\chi$ is imprimitive, that is $t_\chi>0$ and
 the value of $\chi(g(x))$ depends only on the value of $x$ mod $p^{m-t_\chi}$.  Thus, with $\ell:=\min\{t_\chi, \ell_f\}$, we have $1 \le \ell \le m$ and the value of $\chi(g(x))e_{p^m}(f(x))$ depends only on the value of $x$ mod $p^{m-\ell}$, implying
\begin{align}
S(\chi,g,f,p^m)&= 
p^{\ell} \sum_{x=1}^{p^{m-\ell}} \chi(g(x))e_{p^m}(f(0)+p^{\ell_f}F(x))\notag\\
&= p^{\ell}e_{p^m}(f(0)) \sum_{x=1}^{p^{m-\ell}} \chi(g(x))e_{p^{m-\ell}} (p^{\ell_f-\ell}F(x))\\
&=p^{\ell}e_{p^m}(f(0))\,  S(\chi,g,p^{\ell_f-\ell}F, p^{m-\ell})
, \label{Scasei}
\end{align}
where $\chi$ is now regarded as a  mod $p^{m-\ell}$ character  with $p^{t_\chi-\ell}\|c_\chi$.
If $\ell=t_\chi<\ell_f$, then $\chi$ is a primitive character mod $p^{m-\ell}$ and so the  sum in \eqref{Scasei} is non-degenerate, unless $m-\ell=1$ and $g(X)=bh(X)^r$ mod $p$ for some $h(X)$, where $r$ is the order of $\chi$. In the latter case, $\ell=m$ by definition, and the theorem holds trivially. 

If $\ell = \ell_f$ then $p \nmid p^{\ell_f-\ell}F(X)$ and so again the sum is non-degenerate.
To obtain the critical point function for the latter sum note that
$$
p^{\ell_f-\ell}F'(X) + p^{-\ell} c_{\chi} \frac {g'(X)}{g(X)}= p^{-\ell} \Big(f'(X) +c_\chi \frac {g'(X)}{g(X)}\Big).
$$
Thus $t(\chi, g,p^{\ell_f-\ell}F,p^{m-\ell}) = t-\ell$
 and the critical point function for the new sum is the same as for the original. 
 We have $\deg_p(p^{\ell_f-\ell}F)=\deg_p(F) \ge 1$ and obtain from Theorem \ref{maintheorem0} that
\begin{align} \label{llf}
|S(\chi,g,f,p^m)| \le p^{\ell} 3(\deg_p(F))^{\frac 1D} p^{(m-\ell)(1-\frac 1D)} =  3(\deg_p(F))^{\frac 1D} p^{\frac {\ell}D} p^{m(1-\frac 1D)}.
\end{align}
Similarly, if $\ell=t_\chi$, then $\chi$ is primitive mod $p^{m-\ell}$ and so by Theorem \ref{maintheorem0},
\begin{align} \label{ltchi}
|S(\chi,g,f,p^m)| &\le p^{\ell}\max\{3 \deg_p(g)^{\frac 1D},\, \deg_p(g)^{\frac 2D}\} p^{(m-\ell)(1-\frac 1D)}  \notag \\
& = \max\{3 \deg_p(g)^{\frac 1D},\, \deg_p(g)^{\frac 2D}\} p^{\frac {\ell}D} p^{m(1-\frac 1D)}.
\end{align}

\subsection{The case $\ell_g>0$} In this case, both $f$ and $g$ are constants mod $p$ and we have that $\log(g(X)/g(0))=\log(1+p^{\ell_g}G(X))$ admits a series expansion.
Set
\begin{align} \label{Hdef0}
H(X):= p^{\ell_f}F(X)+c_\chi\sum_{j=1}^J \tfrac {(-1)^{j-1}}{j} p^{j\ell_g}G(X)^{j} 
\end{align}
where $J$ is minimal such that for $j>J$, $j\ell_g-\text{ord}_p(j) \ge m$. Thus $H(X)$ is a rational function of degree at most $J\deg(G) +\deg(F)$. Moreover, by the choice of $J$,
\begin{align} \label{Hdef1} 
H(X) &\equiv p^{\ell_f}F(X) + c_\chi \log(1+p^{\ell_g}G(X)) \pmod {p^m} \notag\\ & \equiv f(X)-f(0)+c_\chi \log(g(X)/g(0)) \pmod {p^m}.
\end{align} 
Thus 
$$
H'(X) \equiv f'(X)+c_\chi g'(X)/g(X)\equiv p^{t}\mathcal C(X) \pmod {p^m}.
$$
If $p^m|H(X)$ (including $H(X)=0$)  put $\ell:=m$.  Otherwise
 put
$
\ell=\ell_H:=\text{ord}_p(H(X))$ and note that $\ell \ge 1$ since $\ell_f>0$ and $\ell_g+t_\chi>0$. 
For $\ell=m$ the estimate in the following proposition is trivial and so we assume henceforth that $\ell<m$.

By \eqref{plog}, for any integer $x$ we have 
\begin{align}\label{chiH}
\chi(g(x))e_{p^m}(f(x))&= \chi(g(0))\, e_{p^m}\big(\, c_\chi \log(1+p^{\ell_g}G(x))\big)\, e_{p^m}(f(0)) e_{p^m} \big(p^{\ell_f}F(x)\big) \notag\\
 &= \chi(g(0))\, e_{p^m}(f(0))\,  e_{p^m}(H(x)),
\end{align}
and so
\begin{align}
S(\chi,g,f,p^m)&= \chi(g(0))e_{p^m}(f(0))\sum_{x=1}^{p^m}e_{p^m}(H(x)) \notag\\
&= p^{\ell}\chi(g(0))e_{p^m}(f(0))\sum_{x=1}^{p^{m-\ell}}e_{p^{m-\ell}}(p^{-\ell}H(x)) \notag\\
&= p^{\ell}\chi(g(0))e_{p^m}(f(0))S(p^{-\ell}H, p^{m-\ell}). \label{Schiconvert1}
\end{align}

Thus
 $S(\chi,g,f,p^m)$ degenerates to the pure exponential sum $S(p^{-\ell}H, p^{m-\ell})$.
 The resulting sum is non-degenerate since $\deg_p(p^{-\ell}H) \ge 1$, and it has the same critical point function $p^{-t}(f'+c_\chi g'/g)$ as the original sum. The $t$-value for the new sum is $t(p^{-\ell}H,p^{m-\ell})=t-\ell$ since $H'(X)=p^t\mathcal C(X)$. 
 
Define $\Delta$ as in the introduction,
\begin{equation} \label{Deltadef2}
\Delta:=\begin{cases} \deg(f)+\deg(g), & \text{if $f,g$ are polynomials;}\\
2\deg(f)+2\deg(g),& \text{if $f$ or $g$ is non-polynomial.} \end{cases}
\end{equation}

\begin{proposition}
\label{degenprop} 

Let $f,g$ be rational functions over $\mathbb Q$, not both constants, $p^m$ a prime power, $\chi$ a  multiplicative character  mod $p^m$, and $\ell, t,D$ be  as defined  in \eqref{eldef}, \eqref{t}, \eqref{Ddef}. 

i) If $\ell_g=0$ then $S(\chi,g,f,p^m)$ degenerates to a  mixed character sum mod $p^{m-\ell}$  as given in \eqref{Scasei}, and
\begin{equation}
|S(\chi,g,f,p^m)|  \le \begin{cases} 3\, \deg_p(F)^{\frac 1D} p^{\frac {\ell}D} p^{m(1-\frac 1D)},& \text{if $\ell=\ell_f$;}\\
\max\{3\deg_p(g)^{\frac 1D},\, \deg_p(g)^{\frac 2D}\} p^{\frac \ell{D}} p^{m(1-\frac 1D)}; & \text{if $\ell=t_\chi$;}
\end{cases}
\end{equation}  
with $3$ replaced by $2^{\frac 53}$ for $p=2$. The same inequality holds with $\Delta$ in place of $D$.

ii) If $\ell_g>0$ then  $S(\chi,g,f,p^m)$ degenerates to a  pure exponential sum mod $p^{m-\ell}$  as in  \eqref{Schiconvert1}, and with $d_p=\deg_p(p^{-\ell}H)$, we have 
\begin{equation} \label{degenublg00}
|S(\chi,g,f,p^m)|  \le \max\{3d_p^{\frac 1D},\, d_p^{\frac 2{D}}\}\ p^{\frac {\ell}D} p^{m(1-\frac 1D)},
\end{equation} 
with $2^{\frac 53}$ in place of $3$ for $p=2$.
Again, the same holds with $\Delta$ in place of $D$. 
\end{proposition}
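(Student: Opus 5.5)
Part (i) is essentially done by the reduction already carried out in the subsection on $\ell_g=0$. Assuming $\ell<m$, identity \eqref{Scasei} writes $S(\chi,g,f,p^m)$ as $p^\ell e_{p^m}(f(0))$ times the mod $p^{m-\ell}$ sum $S(\chi,g,p^{\ell_f-\ell}F,p^{m-\ell})$. This reduced sum is non-degenerate. If $\ell=\ell_f$, then $\deg_p(p^{\ell_f-\ell}F)=\deg_p(F)\ge 1$, and Theorem \ref{maintheorem0}(i) gives \eqref{llf}; here one uses $p^\ell p^{(m-\ell)(1-1/D)}=p^{\ell/D}p^{m(1-1/D)}$. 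If $\ell=t_\chi$, then $\chi$ is primitive mod $p^{m-\ell}$, and Theorem \ref{maintheorem0}(ii) gives \eqref{ltchi}. The parameter $D$ of the reduced sum is unchanged, since $f_-$ and $g$ are the same and $\deg(p^{\ell_f-\ell}F)\le\deg(f)$. The exceptional case $m-\ell=1$ with $g\equiv bh^r$ was declared to have $\ell=m$, where the bound is trivial. For $p=2$ the same argument applies with the $p=2$ constants from Theorem \ref{maintheorem0}.

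For part (ii) I start from \eqref{Schiconvert1}, so that $|S|=p^\ell|S(p^{-\ell}H,p^{m-\ell})|$ with $d_p=\deg_p(p^{-\ell}H)\ge 1$. I regard this pure sum as a mixed sum with trivial character and $f$ replaced by $p^{-\ell}H$. It has the same critical point function $\mathcal C$ and $t$-value $t-\ell$. To get a bound in terms of the original $D$ rather than $\deg(H)$, I would not apply Theorem \ref{maintheorem0}(i) to $p^{-\ell}H$ directly, because $\deg(H)$ may exceed $D$. Instead I rerun its proof, whose key input is Corollary \ref{maincor1}, and this only depends on $\deg_p(\mathcal C_+)\le D-1$ by Lemma \ref{CDub} together with $t$. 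Concretely, if $m-\ell\ge (t-\ell)+2$, Corollary \ref{maincor1} gives $|S(p^{-\ell}H,p^{m-\ell})|\le 3p^{(t-\ell)/D}p^{(m-\ell)(1-1/D)}$. If $2\le m-\ell\le t-\ell+1$, the trivial bound gives $p^{(t-\ell+1)/D}p^{(m-\ell)(1-1/D)}$. If $m-\ell=1$, the Weil bound \eqref{weilub2} applies, provided the degree of $H$ mod $p$ can be controlled by $D$. Next I need $p^{t-\ell}\le d_p$. This follows from Corollary \ref{tboundcor}, or directly from Corollary \ref{fspcorollary}, applied to $p^{-\ell}H$, since $p^{t-\ell}\mid (p^{-\ell}H)'$. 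Combining as in \eqref{trivialtm}, using $p^{t-\ell+1}\le p^{2(t-\ell)}\le d_p^2$ when $t-\ell\ge1$, and multiplying by $p^\ell$, yields \eqref{degenublg00}.

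The $\Delta$ versions in both parts follow by monotonicity. Each bound has the shape $A^{1/D}p^{\ell/D}p^{m(1-1/D)}=p^m(Ap^{\ell-m})^{1/D}$, and whenever $Ap^{\ell-m}\le 1$ this increases when $D$ is replaced by the larger $\Delta\ge D$. Otherwise the bound exceeds $p^m$ and is trivial. I must check this carefully with the prefactor $3$ kept outside the root.

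I expect the main obstacle to be the case $m-\ell=1$ in part (ii). There the Weil bound for $p^{-\ell}H$ mod $p$ involves $\deg_p(p^{-\ell}H)$, which may be as large as $d_p$ and is not bounded by $D$. My first attempt would be to use that the mod $p$ sum has derivative $\mathcal C$ mod $p$, of degree $\le D-1$, so that $p^{-\ell}H$ mod $p$ is an additive polynomial-type piece plus something of degree $\le D$. Failing that, I would fall back on the trivial bound $p^{\ell}\cdot p\le d_p^{1/D}p^{\ell/D}p^{m(1-1/D)}$, which holds when $p\le d_p$. I would handle $p>d_p$ separately, where $\deg_p$ is small enough that \eqref{weilub2} applies with degree $d_p$ and the factor $d_p^{1/D}$ absorbs the discrepancy.
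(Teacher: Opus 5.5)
Your part (i), your part (ii) for $m-\ell\ge 2$ and for $m-\ell=1$ with $p\le d_p$, and your monotonicity argument for the $\Delta$ versions are all sound. For $m\ge t+2$ the paper applies Corollary \ref{m2} to the original sum rather than Corollary \ref{maincor1} to the reduced one, but the two are equivalent. Your monotonicity argument in fact gives the $\Delta$ inequalities exactly as stated, whereas the paper's route through Corollary \ref{maincor} only yields the constant $3^{4/3}$ in (i).

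\textbf{The gap.} It lies in the one case of (ii) that needs a new ingredient: $m-\ell=1$ with $p>d_p$. Your fallback there does not work. Let $q$ be $p^{-\ell}H$ reduced mod $p$. Then \eqref{weilub2} gives $|S(q,p)|\le 1.75\,p^{1-1/D'}$ with $D'=d_p+\mathcal Z_p(q_-)$. You need $\max\{3d_p^{1/D},d_p^{2/D}\}\,p^{1-1/D}$. If $D'>D$, the ratio grows like $p^{1/D-1/D'}$, which is unbounded as $p\to\infty$. So no $p$-independent factor such as $d_p^{1/D}$ can absorb it, and the raw bound $(D'-1)\sqrt p$ fails the same way, e.g.\ for $D=2$. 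The hypothesis $d_p<p$ says nothing by itself about $D'$ versus $D$. Your ``additive piece'' idea does not supply this either. For $p>d_p$ there is no $X^p$-type piece, and in characteristic $p$ you cannot simply integrate $\mathcal C$ to bound the degree of $q$.

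\textbf{The missing step.} It is the argument behind Lemma \ref{dpHD}(ii).
\begin{itemize}
\item From $p^{t-\ell}\le d_p<p$, together with $\ell\le t$ (because $p^\ell\mid H$ and $H'\equiv p^t\mathcal C$), you get $t=\ell$. Hence $q'=\mathcal C \bmod p$.
\item Since $p>\deg q$, Lemma \ref{fprimedeg} applies to $q=q_+/q_-$. With $Z=\mathcal Z_p(q_-)$, the reduced denominator of $q'$ has degree exactly $\deg(q_-)+Z$. If $\deg(q_+)>\deg(q_-)$, the reduced numerator has degree exactly $\deg(q_+)+Z-1$. This is where $p>\deg q$ is genuinely used: no multiplicity or degree difference vanishes mod $p$.
\item By Lemma \ref{CDub}, the reduction of $\mathcal C$ has numerator degree at most $D-1$ and denominator degree at most $D$. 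Hence $d_p+Z\le D$, that is, $D'\le D$.
\item Then \eqref{weilub2} gives $|S|\le 1.75\,p^{\ell}p^{1-1/D}\le 3\,d_p^{1/D}p^{\ell/D}p^{m(1-1/D)}$.
\end{itemize}
The paper uses only $d_p\le D$. It takes the trivial bound when $p\le 3^Dd_p$, and otherwise $|S(q,p)|\le 2d_p\sqrt p$ together with $d_p\le D\le 3^D/4$. Without this degree comparison, your proof of (ii) is incomplete.
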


\noindent We prove the proposition in the next section. First we show how Theorem \ref{degentheorem} follows.

\begin{proof}[Proof of Theorem \ref{degentheorem}]  If $\ell_g=0$ and $\ell=\ell_f$,  then from the proposition
$$
|S(\chi,g,f,p^m)| \le 3 \deg(f)^{\frac 1\Delta} p^{\frac \ell \Delta} p^{m(1-\frac 1\Delta)} \le 3 \Delta^{\frac 1\Delta} p^{\frac \ell \Delta} p^{m(1-\frac 1\Delta)} \le 3^{\frac 43} p^{\frac \ell \Delta} p^{m(1-\frac 1\Delta)}. 
$$
The same holds if $\ell_g=0$ and $\ell=t_\chi$, since $\deg_p(g)^{\frac 1\Delta} \le 3\Delta^{\frac 1\Delta} \le 3^{\frac 43}$. 
If $\ell_g>0$ the statement of the proposition is the same as the theorem.
\end{proof}

\section{Proof of Proposition \ref{degenprop}}

\begin{lemma}\label{fprimedeg} Suppose that $q(X)=q_+(X)/q_-(X)$ is a rational function over a field $F$, with $q_+, q_-$ relatively prime polynomials over $F$. If $F$ has characteristic $p$,  assume further that $p>\max\{\deg(q_+),\deg(q_-)\}$. Let $Z$ denote the number of distinct zeros of $q_-$ in a splitting field. Then 
$$
\deg(q') =\max\{\deg(q_+)+Z-1,\ \deg(q_-)+Z\}\ge \deg(q)+Z-1.
$$  
\end{lemma}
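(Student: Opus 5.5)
We will compute $q'$ directly from the quotient rule, arranged so that the cancellation between numerator and denominator can be tracked. Factor $q_-(X)=c\prod_{i=1}^{Z}(X-\beta_i)^{f_i}$ over a splitting field, and put $P(X):=\prod_{i=1}^Z(X-\beta_i)$ (the radical of $q_-$). Since $q'/q = q_+'/q_+ - \sum_i f_i/(X-\beta_i)$, we write
$$
q'(X)=\frac{N(X)}{q_-(X)P(X)},\qquad N(X):=P(X)\,q_+'(X)-q_+(X)\,P(X)\sum_{i=1}^Z\frac{f_i}{X-\beta_i},
$$
where $N$ is a polynomial. The plan has three steps: (a) show this representation is already in lowest terms, (b) compute $\deg N$, and (c) read off $\deg(q')=\max\{\deg N,\deg(q_-P)\}$ with $\deg(q_-P)=\deg(q_-)+Z$.

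For (a), we check that no $\beta_j$ is a root of $N$. Evaluating at $X=\beta_j$, every term vanishes except $-q_+(\beta_j)f_j\prod_{i\ne j}(\beta_j-\beta_i)$. This is nonzero because $q_+(\beta_j)\ne0$ (coprimality), the $\beta_i$ are distinct, and $f_j\ne0$ in $F$. The last point is exactly where the hypothesis $p>\deg(q_-)\ge f_j$ enters. Hence $\gcd(N,q_-P)=1$, so $\deg(q')=\max\{\deg N,\deg(q_-)+Z\}$.

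For (b), we look at the top coefficients. Let $n=\deg q_+$ with leading coefficient $a$. Then $Pq_+'$ has $X^{n+Z-1}$-coefficient $na$, and $q_+P\sum f_i/(X-\beta_i)$ has $X^{n+Z-1}$-coefficient $a\sum f_i=a\deg(q_-)$. So the coefficient of $X^{n+Z-1}$ in $N$ is $a(n-\deg q_-)$, and both pieces have degree at most $n+Z-1$. When $n\ne\deg q_-$, this coefficient is nonzero in $F$, because $0<|n-\deg q_-|<p$ by the hypothesis $p>\max\{\deg q_+,\deg q_-\}$; hence $\deg N=n+Z-1$. When $n=\deg q_-$, we have $\deg N\le n+Z-1<\deg(q_-)+Z$, so $N$ does not affect the maximum. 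In that case the maximum is $\deg(q_-)+Z=\max\{\deg(q_+)+Z-1,\deg(q_-)+Z\}$ anyway. (The case $q_+'=0$ with $n=0$ is covered by the same computation.)

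The main obstacle is step (b) in the case of equal degrees, where genuine cancellation in $N$ can occur. It is handled, as above, by observing that the denominator term dominates there, so the exact value of $\deg N$ is never needed. The final inequality is then immediate: $\deg(q)=\max\{\deg q_+,\deg q_-\}$, and $\max\{\deg(q_+)+Z-1,\deg(q_-)+Z\}\ge\max\{\deg q_+,\deg q_-\}+Z-1$.
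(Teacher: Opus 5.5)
Your proof is correct and is essentially the paper's argument. The paper writes $q'=(q_+'q_- - q_-'q_+)/q_-^2$, computes $\gcd(q_+'q_- - q_-'q_+,\,q_-^2)=\prod_{\alpha}(X-\alpha)^{e_\alpha-1}$ by exact multiplicities (using $p\nmid e_\alpha$), and after cancelling is left, up to a constant, with your numerator $N$ over your denominator $q_-P$; it then takes the numerator degree from the top coefficient $(\deg q_+-\deg q_-)\cdot(\mbox{leading coefficients})$ when the degrees differ, and lets the denominator dominate when they are equal. The differences are minor: you reach lowest terms by evaluating $N$ at the roots $\beta_j$ rather than by counting multiplicities, and you state explicitly the second use of $p>\max\{\deg q_+,\deg q_-\}$ (that $n-\deg q_-\neq 0$ in $F$), which the paper leaves implicit.
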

\begin{proof} For simplicity of notation let $f=q_+$, $g=q_-$ so that $q'=\frac {f'g-g'f}{g^2}$ with $(f,g)=1$. Let $h=(f'g-g'f,g^2)$. Suppose that $\alpha$ is a zero of $g$ in a splitting field, with $(X-\alpha)^e\|g(X)$. Then $(X-\alpha)^{e-1}\|g'(X)$ since $p \nmid e$ in case $F$ has characteristic $p$. Since $q_+,q_-$ are relatively prime, $(X-\alpha) \nmid f(X)$ and so $(X-\alpha)^{e-1}\|(f'g-g'f)$. Thus if $g(X)=\prod_{\alpha \in \mathcal Z} (X-\alpha)^{e_\alpha}$ we have $h(X)= \prod_{\alpha \in \mathcal Z} (X-\alpha)^{e_\alpha-1} $, a polynomial of degree $\deg(g)-Z$,  and the numerator of $q'$  in reduced form has degree $\deg(f'g-g'f)-(\deg(g)-Z)$ while the denominator has degree $2\deg(g)-(\deg(g)-Z)= \deg(g)+Z$. 

Suppose that $\deg(f) \neq \deg(g)$ so that $\deg(f'g-g'f)=\deg(f)+\deg(g)-1$. Then in reduced form, the numerator of $q'$ has degree $\deg(f)+\deg(g)-1-(\deg(g)-Z)=\deg(f)+Z-1$ yielding the desired equality.  If $\deg(f)=\deg(g)$ then the numerator can have smaller degree, but in this case the denominator has maximum degree, $\deg(g)+Z$, and again the result follows.
\end{proof}

\begin{lemma} \label{dpHD} i) Let $H$ be as defined in \eqref{Hdef}. Then $p^{t-\ell}\le \deg_p(p^{-\ell} H)$.

ii) If $p>\deg_p(p^{-\ell}H)$, then $\deg_p(p^{-\ell}H) \le D$, with $D$ as in \eqref{Ddef}.
\end{lemma}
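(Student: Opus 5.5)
Both parts reduce to statements about the pure exponential sum $S(p^{-\ell}H,p^{m-\ell})$ from \eqref{Schiconvert1}. The facts I use are that its $t$-value is $t-\ell$ and that its derivative satisfies $(p^{-\ell}H)'\equiv p^{t-\ell}\mathcal C(X)$ mod $p^{m-\ell}$. Part (i) then follows from Corollary \ref{fspcorollary}, and part (ii) from Lemma \ref{fprimedeg} combined with Lemma \ref{CDub}.

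For (i), put $K:=p^{-\ell}H$. Since $\text{ord}_p(K)=0$ and $p^{t-\ell}\mid K'(X)$, Corollary \ref{fspcorollary} applied with $k=t-\ell$ gives $p^{t-\ell}\mid \deg_p(K)$. Because $\deg_p(K)\ge 1$ (noted just before Proposition \ref{degenprop}), we get $p^{t-\ell}\le \deg_p(K)$. If $t=\ell$ the claim is trivial. One point needs checking: the congruence $H'\equiv p^t\mathcal C$ holds only mod $p^m$, while Corollary \ref{fspcorollary} needs exact divisibility of $K'$ by $p^{t-\ell}$. Since $t-\ell\le m-\ell$ whenever the congruence is informative, this is fine. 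To be safe, I would replace $H$ by a representative mod $p^m$, which changes neither $\deg_p(K)$ nor $K'$ mod $p^{t-\ell}$.

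For (ii), set $d_p:=\deg_p(K)$ and apply Lemma \ref{fprimedeg} to the reduction $q:=K$ mod $p$, viewed over $\mathbb F_p$. The hypothesis $p>\max\{\deg q_+,\deg q_-\}=d_p$ is exactly what is assumed. This gives
\[
\deg(q')\ge d_p+Z_q-1,
\]
where $Z_q$ is the number of distinct poles of $q$. Since $p>d_p$, part (i) forces $t=\ell$, so $K'\equiv\mathcal C$ mod $p$, and we may write $q'=\overline{\mathcal C}$, the reduction mod $p$. This needs care, because $\mathcal C$ is only congruent to $K'$ mod $p^{m-\ell}$; as $m-\ell\ge1$, the congruence holds mod $p$. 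Hence
\[
d_p+Z_q-1\le \deg_p(\mathcal C)=\max\{\deg_p\mathcal C_+,\deg_p\mathcal C_-\}.
\]

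It remains to bound the right side by $D-1+Z_q$, and I expect this to be the main obstacle. Lemma \ref{CDub} gives $\deg\mathcal C_+\le D-1$ directly, and reduction mod $p$ can only lower degrees. For $\mathcal C_-$ the bound $\deg\mathcal C_-\le D$ is too weak, since it would only give $d_p\le D+1-Z_q$. I would refine it using the pole structure. The poles of $\overline{\mathcal C}=q'$ are exactly the poles of $q$, each with order one more than in $q$. Consequently $\deg(q'_-)=\deg(q_-)+Z_q\le d_p+Z_q$.

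If the maximum in Lemma \ref{fprimedeg} is attained by the numerator, then $d_p\le \deg(q_+)\le\deg_p\mathcal C_+-Z_q+1\le D-Z_q\le D$. Otherwise $d_p=\deg q_-$, and $d_p$ is at most the total pole multiplicity of $q$. Those poles come from poles of $f$ and zeros or poles of $g$, that is, from roots of $f_-g_+g_-$. In that case I would instead bound $\deg q_-$ through the numerator: when $\deg q_+<\deg q_-$, the numerator of $q'$ has degree exactly $\deg q_-+Z_q-1$. This numerator divides the reduction of $\mathcal C_+$, so
\[
d_p+Z_q-1\le D-1,
\]
which gives $d_p\le D$. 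The same numerator comparison handles both cases, so I would present it uniformly via the formula $\deg(q'_+)$ read off from the proof of Lemma \ref{fprimedeg}.
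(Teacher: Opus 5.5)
Part (i), and the case $\deg q_+>\deg q_-$ of part (ii), are correct and match the paper's argument. The gap is in the remaining case of (ii), $\deg q_+\le \deg q_-$, where $d_p=\deg q_-$.

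There you claim that the reduced numerator of $q'$ has degree exactly $\deg q_-+Z_q-1$, and this is false. The computation in the proof of Lemma \ref{fprimedeg} shows the following. When $\deg q_+\neq\deg q_-$, the reduced numerator of $q'$ has degree $\deg q_++Z_q-1$, and when the two degrees are equal it can be smaller still. For example, $q=X^{-k}$ gives $q'=-kX^{-k-1}$: the numerator is constant while $d_p=k$. So comparing the numerator of $q'$ with $\overline{\mathcal C}_+$ gives no information about $\deg q_-$. Your proposed ``uniform numerator comparison'' therefore fails precisely in this case.

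The missing step is to use the denominator, which you set aside too quickly. You already observed that $\deg q'_-=\deg q_-+Z_q$ exactly, since each pole order rises by one, and this is valid because pole orders are less than $p$. Because $q'=\overline{\mathcal C}_+/\overline{\mathcal C}_-$, the reduced denominator $q'_-$ divides $\overline{\mathcal C}_-$. Lemma \ref{CDub} then gives
\[
d_p+Z_q=\deg q'_-\le \deg \mathcal C_-\le D,
\]
so $d_p\le D-Z_q\le D$. This is what the paper does in its second case.

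In fact, the bound you called ``too weak'', $d_p\le D+1-Z_q$, already suffices here. In this case $d_p=\deg q_-\ge 1$, which forces $Z_q\ge 1$. Only in the numerator case, where $q$ may be a polynomial with $Z_q=0$, do you genuinely need the sharper estimate $\deg\mathcal C_+\le D-1$, and there you used it correctly.
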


\begin{proof} $i)$ Since $H'(X)= p^t\mathcal C(X)$ we have $ p^{t-\ell}|p^{-\ell}H'(X)$ and so by Corollary \ref{tboundcor}, $p^{t-\ell}\le \deg_p(p^{-\ell}H)$. 

$ii$) Since $p> \deg_p(p^{-\ell}H)$ it follows from part $(i)$ that $\ell=t$. Write $p^{-\ell}H(X)=\frac {p^{-\ell}H_+(X)}{H_-(X)}$ say in reduced form over $\mathbb Z$ and $p^{-\ell}H(X) \equiv q(X)= \frac {q_+(X)}{q_-(X)}$ mod $p$ in reduced form over $\mathbb F_p$.  Suppose first that $\deg_p(q_+)\ge \deg_p(q_-)+1$ so that by Lemma \ref{fprimedeg}, $\deg_p(p^{-\ell}H') = \deg_p (q_+)+Z-1$ where $Z$ is the number of distinct zeros of $q_-$. But $p^{-\ell}H'(X)\equiv \mathcal C(X)$ mod $p$ and so 
$$
\deg_p(q)=\deg_p(q_+)=  \deg_p(\mathcal C_+) -Z+1 \le \deg(\mathcal C)-Z+1 \le D-Z,
$$
 by Lemma \ref{Cxdegub}.

If $\deg_p(q_+)\ge \deg_p(q_-)+1$ then by Lemma \ref{fprimedeg}, $\deg_p(p^{-\ell}H') = \deg_p (q_-)+Z$, and so 
$$
\deg_p(q)=\deg_p(q_-)=  \deg_p(\mathcal C_-) -Z \le \deg(\mathcal C_-)-Z \le D-Z.
$$
\end{proof} 

\begin{proof}[Proof of Proposition \ref{degenprop}] $i$) We may assume that $\ell<m$.  The first bound in part $(i)$  was established in \eqref{llf} and \eqref{ltchi} for odd $p$. 
The second estimate follows in the same manner using the bound from Corollary \ref{maincor}, $|S(\chi,g,f,p^m)| \le 3^{\frac 43} p^{m(1-\frac 1\Delta)}$, instead of Theorem \ref{maintheorem0}. Thus for $\ell=\ell_f$ we get
$$
|S(\chi,g,f,p^m)| \le p^{\ell} 3^{\frac 43} p^{(m-\ell)(1-\frac 1\Delta)}=3^{\frac 43} p^{\frac \ell \Delta} p^{m(1-\frac 1\Delta)},
$$
and similarly for $\ell=t_\chi$.

$ii$) Suppose now that $\ell_g>0$ so that we have the decomposition in \eqref{Schiconvert1}.  Let $d_p:=\deg_p(p^{-\ell} H)$. 
By Lemma \ref{dpHD},
 $p^{t-\ell} \le d_p$, 
that is $p^t  \le p^{\ell} d_p$. For $1 \le m \le t$ we then have trivially
$$
|S(\chi,f,g,p^m)| \le p^m \le p^{\frac {t}D}p^{m(1-\frac 1D)} \le p^{\frac \ell{D}}  d_p^{\frac 1D} p^{m(1-\frac 1D)}.
$$
Similarly if $m=t+1$ and $p \le d_p$ then
$$
|S(\chi,f,g,p^m)| \le  p^{\frac {t+1}D}p^{m(1-\frac 1D)} \le p^{\frac \ell{D}} p^{\frac 1D} d_p^{\frac 1D} p^{m(1-\frac 1D)} \le  p^{\frac \ell{D}}  d_p^{\frac 2D} p^{m(1-\frac 1D)}.
$$ 

Suppose now that $m=t+1$ and $p>d_p$. Then, by  Lemma \ref{dpHD}, $\ell=t$ and $d_p:=\deg_p(p^{-\ell}H) \le D$. 
Now, trivially
$$
|S(\chi,f,g,p^m)| \le p^m = p^{\ell+1}  \le 3 d_p^{\frac 1D}p^{\frac \ell D} p^{m(1-\frac 1D)},
$$
provided that $p<3^D d_p$.  Otherwise, $p>3^D d_p$ and we obtain from \eqref{Schiconvert1} and
 the Weil estimate for  $S(p^{-\ell} H,p)$,  that
$$
|S(\chi,f,g,p^m)| \le p^{\ell}(2d_p)\sqrt{p} \le 3 p^{\frac {\ell}D}d_p^{\frac 1D} p^{m(1-\frac 1D)}, 
$$
provided that $d_p \le \frac 14 3^D$, which is the case since $d_p \le D \le \frac 14 3^D$ for $D \ge 2$.

Finally,
if $m \ge t+2$ then by Corollary \ref{m2} and  the fact that $p^t \le p^{\ell}d_p$, we get
\begin{align*}
|S(\chi,f,g,p^m)|
& \le  3\, p^{\frac {t}D} p^{m(1-\frac 1D)}\le 3\, p^{\frac {\ell}{D}}d_p^{\frac 1D} p^{m(1-\frac 1D)}. 
\end{align*}

\end{proof}

 \section{Estimating $\deg_p(p^{-\ell} H)$}

Consider several cases. If $\ell_f \neq \ell_g+t_\chi$, then the degree is easy to determine.
\begin{lemma}   Suppose that $p$ is odd, $\ell_f>0$, $\ell_g>0$ and $\ell_f \neq \ell_g+t_\chi$.  Then,
 $$
 \deg_p(p^{-\ell}H) = \begin{cases} \deg_p(F), & \text{if $\ell_f<\ell_g+t_\chi$;}\\
 \deg_p(G), & \text{if $\ell_f>\ell_g+t_\chi$.}
 \end{cases}
 $$
\end{lemma}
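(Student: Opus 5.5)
We compare the $p$-adic orders of the two pieces of $H$ in \eqref{Hdef0}. The first piece is $p^{\ell_f}F(X)$, with $p\nmid F$. The second piece is
$$c_\chi\sum_{j=1}^J \tfrac{(-1)^{j-1}}{j}p^{j\ell_g}G(X)^j = c_\chi p^{\ell_g}G(X) + c_\chi\sum_{j\ge 2}\tfrac{(-1)^{j-1}}{j}p^{j\ell_g}G(X)^j.$$
Its leading term $c_\chi p^{\ell_g}G(X)$ has exact order $t_\chi+\ell_g$, because $p^{t_\chi}\|c_\chi$ and $p\nmid G$.

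The first step is to show that every term with $j\ge 2$ has order strictly larger than $t_\chi+\ell_g$. The $j$-th term has order at least $t_\chi + j\ell_g-\text{ord}_p(j)$. For odd $p$ and $\ell_g\ge 1$ we have $j\ell_g-\text{ord}_p(j)\ge j-\text{ord}_p(j) > 1\cdot\ell_g$ whenever $j\ge 2$. Indeed, $(j-1)\ell_g\ge j-1>\text{ord}_p(j)$, since $p^{\text{ord}_p(j)}\le j$ and $3^k>k+1$ for $k\ge1$. The case $\text{ord}_p(j)=0$ is trivial. This is precisely where oddness of $p$ enters (for $p=2$ and $j=2$ the inequality can fail). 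Hence
$$c_\chi\log(1+p^{\ell_g}G)=p^{t_\chi+\ell_g}\big(c_\chi p^{-t_\chi}G + pW(X)\big)$$
for some rational function $W$ with $\text{ord}_p(W)\ge0$. Here $c_\chi p^{-t_\chi}$ is a unit.

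Next we split into the two cases. If $\ell_f<\ell_g+t_\chi$, then $H=p^{\ell_f}\big(F+p^{\ell_g+t_\chi-\ell_f}(\cdots)\big)$, so $\ell=\ell_f$ and $p^{-\ell}H\equiv F \pmod p$. This gives $\deg_p(p^{-\ell}H)=\deg_p(F)$. If $\ell_f>\ell_g+t_\chi$, then symmetrically $\ell=\ell_g+t_\chi$ and $p^{-\ell}H\equiv uG\pmod p$ with $u=c_\chi p^{-t_\chi}$ a $p$-adic unit. Hence $\deg_p(p^{-\ell}H)=\deg_p(G)$. In both cases we must also check that $\ell<m$ is consistent with the definition of $\ell$ in \eqref{eldef}. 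If instead the minimum order is $\ge m$, then $\ell=m$ by convention and the statement is vacuous or trivial. This matches the remark after \eqref{eldef} that $\ell=\min\{\ell_f,\ell_g+t_\chi\}$.

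The only delicate point is the reduction mod $p$ of rational functions. We need that adding $p$ times a $p$-integral rational function does not change the reduction of $F$ (respectively $uG$). This holds because all the functions involved have denominators that are units mod $p$: they are powers of $G_-$ and $F_-$, which are nonzero mod $p$ since $p\nmid F G$ and $\text{ord}_p(g)=0$. Therefore the reduced fraction mod $p$, and hence $\deg_p$, is exactly that of $F$ or $uG$. I expect the bound $j\ell_g-\text{ord}_p(j)>\ell_g$ for $j\ge2$ to be the main (albeit small) obstacle, since it is the step that needs $p$ odd.
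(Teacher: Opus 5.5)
Your proof is correct and follows the paper's own argument: the smaller of $\ell_f$ and $\ell_g+t_\chi$ gives $\ell$, and $p^{-\ell}H$ then reduces mod $p$ to $F$ or to $c_\chi p^{-t_\chi}G$. The paper leaves implicit your check that, for odd $p$, the $j\ge 2$ terms of the logarithm have order strictly greater than $\ell_g+t_\chi$; one small slip there is that the intermediate claim $j-\text{ord}_p(j)>\ell_g$ is false when $\ell_g$ is large, but the justification you give right after it, $(j-1)\ell_g\ge j-1>\text{ord}_p(j)$, is correct and suffices.
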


\begin{proof} By \eqref{Hdef1},  we have $\ell \ge \min\{\ell_f, \ell_g+t_\chi\}$ with equality if the two values are different. To be precise,  
if $\ell_f<\ell_g+t_\chi$ then $\ell=\ell_f$,  $p^{-\ell}H(X) \equiv F(X)$ mod $p$, and $D^*=\deg_p(F)$. If $\ell_f>\ell_g+t_\chi$, then $\ell= \ell_g+t_\chi$, $p^{-\ell}H(X) \equiv c_\chi p^{-t_\chi}G(X)$  and $D^*=\deg_p(G)$.
\end{proof}

Suppose now that $\ell_f=\ell_g+t_\chi$.
 In particular, $\ell \ge \ell_f$ and $\ell-t_\chi\ge \ell_g \ge 1$.
Let $L$ be the minimal positive integer such that
\begin{equation} \label{Ldef}
\text{ord}_p(c_\chi p^{j\ell_g}/j)=j\ell_g+t_\chi-\text{ord}_p(j) > \ell, \quad \quad \text{for $j>L$},
\end{equation}
so that
\begin{align}
p^{-\ell}H(X) &\equiv p^{-\ell}\Big( p^{\ell_f}F(X)+p^{-t_\chi}c_\chi \sum_{j=1}^L \tfrac {(-1)^{j-1}}{j} p^{j\ell_g} G(X)^{j} \Big), \pmod {p}. \label{plHX}
\end{align}
By the definition of $J$ \eqref{Hdef0}, we have $L \le J$. 
From \eqref{Ldef} it follows that
$
L\le \tfrac 1{\ell_g}\big(\ell-t_\chi+\text{ord}_p(L)\big),
$
and  $L+1>\frac 1{\ell_g}\big(\ell-t_\chi+\text{ord}_p(L+1)\big)$.
Since $\frac {\text{ord}_p(L)}{L} \le \frac 1p$, we obtain
\begin{equation} \label{Lsqueeze}
\frac {\ell-t_\chi}{\ell_g}-1< L \le \frac {\ell-t_\chi}{\ell_g}\cdot \frac p{p-1}.
\end{equation}

\begin{lemma} \label{L1} Suppose that $p$ is odd, $\ell_f>0$, $\ell_g>0$ and $\ell_f=\ell_g+t_\chi$.  

i) If $\ell=\ell_f$ then $L=1$.

ii) If $L=1$ then for any $\ell$, 
$$
\deg_p(p^{-\ell}H)\le  \deg_p(F)+\deg_p(G)
$$
with improvement to $\max\{\deg_p(F), \deg_p(G)\}$ for polynomial $F,G$.
\end{lemma}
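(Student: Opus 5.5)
Part (i) follows directly from the definition \eqref{Ldef} of $L$. The plan is to show that every term with $j\ge 2$ already satisfies the defining inequality when $\ell=\ell_f=\ell_g+t_\chi$. Indeed, for $j\ge 2$ we want
$$
j\ell_g+t_\chi-\text{ord}_p(j)>\ell_g+t_\chi, \quad\text{that is,}\quad (j-1)\ell_g>\text{ord}_p(j).
$$
Since $\ell_g\ge 1$, it suffices to check $j-1>\text{ord}_p(j)$ for $j\ge 2$. This holds because $\text{ord}_p(j)\le \log_p j<j-1$ for $p\ge 3$ and $j\ge 2$: if $p\nmid j$ the left side is $0$; otherwise $j\ge p^{\text{ord}_p(j)}\ge 3^{\text{ord}_p(j)}>\text{ord}_p(j)+1$. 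Thus the condition in \eqref{Ldef} holds for all $j>1$. Since $L$ is defined as the minimal positive integer with this property, $L=1$.

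For part (ii), with $L=1$ the congruence \eqref{plHX} becomes
$$
p^{-\ell}H(X)\equiv p^{\ell_f-\ell}F(X)+p^{\ell_g+t_\chi-\ell}\,(p^{-t_\chi}c_\chi)\,G(X)=p^{\ell_f-\ell}\big(F(X)+c'G(X)\big) \pmod p,
$$
where $c'=p^{-t_\chi}c_\chi$ is a $p$-adic unit. Note that $\ell\ge \ell_f$, so the overall factor $p^{\ell_f-\ell}$ should be read together with the combination: the left side is $p$-integral, and $p^{-\ell}H$ mod $p$ equals $p^{-(\ell-\ell_f)}(F+c'G)$ reduced mod $p$. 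Here I would be careful that the truncation at $L$ in \eqref{plHX} is only valid modulo $p$ after dividing by $p^\ell$. Terms with $j\ge2$ have valuation $>\ell$ and vanish mod $p$ after division, so $p^{-\ell}H\equiv p^{\ell_f-\ell}(F+c'G)$ mod $p$ as rational functions with $p$-integral reduced form. Hence $p^{-\ell}H$ mod $p$ is the reduction of the rational function $p^{-(\ell-\ell_f)}(F+c'G)$, which equals $\big(F_+G_-+c'G_+F_-\big)/(p^{\ell-\ell_f}F_-G_-)$. Its numerator has degree at most $\deg(F)+\deg(G)$ and its denominator likewise. The reduction mod $p$, after cancelling common factors, cannot increase degrees. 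One needs care that $\deg_p$ of $F$ and $G$ may be less than their degrees over $\mathbb Q$. I would handle this by first replacing $F$ and $G$ by rational functions congruent mod $p^{m}$ whose reduced forms have $\deg=\deg_p$, which is harmless since only residues matter. This gives $\deg_p(p^{-\ell}H)\le \deg_p(F)+\deg_p(G)$.

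In the polynomial case the same expression is the polynomial $p^{-(\ell-\ell_f)}(F+c'G)$, whose degree is at most $\max\{\deg_p F,\deg_p G\}$ after the same normalization. The only delicate point I expect is justifying the normalization so that the divided combination is a genuine integral rational function of the claimed degree. Cancellation of the $p^{\ell-\ell_f}$ happens coefficientwise in the numerator, which does not increase degree.
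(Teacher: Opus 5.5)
Your part (i) is correct and is essentially the paper's argument. The paper observes that $L=1$ exactly when $\ell<\ell_f+\ell_g$; you instead check $(j-1)\ell_g>\text{ord}_p(j)$ directly for all $j\ge 2$.

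In part (ii), your reduction $p^{-\ell}H\equiv p^{\ell_f-\ell}(F+c'G)\pmod p$, with $c'=p^{-t_\chi}c_\chi$, is also correct. It is in fact more careful than the paper's proof, which writes $p^{-\ell}H\equiv F+p^{-t_\chi}c_\chi G\pmod p$. That congruence holds only when $\ell=\ell_f$: if $\ell>\ell_f$, its right-hand side is $\equiv 0 \pmod p$.

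The gap is your normalization step, where you replace $F,G$ by representatives modulo $p^m$ whose degree equals $\deg_p$. Such representatives need not exist, and the step cannot be repaired. When $\ell>\ell_f$, the reduction of $p^{-(\ell-\ell_f)}(F+c'G)$ depends on $F,G$ modulo $p^{\ell-\ell_f+1}$, whereas $\deg_p(F)$ and $\deg_p(G)$ only see $F,G$ modulo $p$.

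Here is a concrete counterexample. Let $p$ be odd, $m\ge 4$, and $\chi$ primitive with $c_\chi=1$. Take $f(X)=-p^2X+p^3X^k$ and $g(X)=1+p^2X$ with $k\ge 3$. Then $\ell_f=\ell_g=2$, $t_\chi=0$, $F(X)=-X+pX^k$ and $G(X)=X$, and
$$H(X)\equiv p^3X^k-\tfrac12p^4X^2+\tfrac13p^6X^3-\cdots \pmod{p^m}.$$
So $\ell=3$. Also $L=1$, because $j\ell_g+t_\chi-\text{ord}_p(j)\ge 4>\ell$ for all $j\ge 2$. Finally $p^{-\ell}H\equiv X^k\pmod p$, so $\deg_p(p^{-\ell}H)=k$, while $\deg_p(F)=\deg_p(G)=1$.

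Hence (ii) as stated, with $\deg_p$ on the right and arbitrary $\ell$, is false. Combined with your own numerator bound, the same example shows that no $\tilde F\equiv F\pmod{p^2}$ of degree $1$ can exist.

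What your argument does establish is the corrected statement, in two cases:
\begin{itemize}
\item If $\ell=\ell_f$ (the situation of part (i)), reduce $F$ and $G$ mod $p$ first. Then $\deg_p(p^{-\ell}H)=\deg_p(F+c'G)\le \deg_p(F)+\deg_p(G)$, or the maximum in the polynomial case, is immediate.
\item For general $\ell$ with $L=1$, normalize $F_-,G_-$ to be primitive, so that by Gauss's lemma $p^{\ell-\ell_f}$ divides $F_+G_-+c'G_+F_-$ coefficientwise. Your numerator/denominator computation then gives the bound $\deg(F)+\deg(G)$, resp.\ $\max\{\deg(F),\deg(G)\}$, with degrees taken over $\mathbb Q$. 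This is just the $L=1$ case of Lemma \ref{DstarUB}.
\end{itemize}
You should state and prove (ii) in that form rather than try to force $\deg_p$.
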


\begin{proof}
We have $L=1$ if and only if $t_\chi+2\ell_g>\ell$ that is, $\ell_f+\ell_g > \ell$. In particular, if $\ell=\ell_f$ then  $L=1$ (since $\ell_g>0$). If $L=1$ we have 
\begin{equation} \label{ellellf}
p^{-\ell}H(X) \equiv F(X) +p^{-t_\chi}c_\chi\, G(X) \pmod p,
\end{equation}
and so $\deg_p(p^{-\ell}H)\le \deg_p(F)+\deg_p(G)$.
\end{proof}
 From \eqref{plHX}, we immediately deduce the next lemma.

 \begin{lemma}\label{DstarUB}  Suppose that $p$ is odd, $\ell_f>0$, $\ell_g>0$ and  $\ell_f=\ell_g+t_\chi$. Then,
 $$
 \deg_p(p^{-\ell}H) \le 
 \deg(F)+L\deg(G),
       $$
 with improvement to $\max\{\deg(F),\, L\deg(G)\}$ if $F$ and $G$ are both polynomials. 
 \end{lemma}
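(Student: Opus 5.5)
The plan is to read the bound directly off the congruence \eqref{plHX}. By the definition of $L$ in \eqref{Ldef}, every term of the logarithm series with index $j>L$ has $p$-adic order exceeding $\ell$. Hence those terms vanish mod $p$ after division by $p^\ell$, and we get
\begin{equation*}
p^{-\ell}H(X)\equiv p^{\ell_f-\ell}F(X)+\sum_{j=1}^{L} b_j\,G(X)^j \pmod p,
\end{equation*}
where $b_j:=p^{-\ell}c_\chi\tfrac{(-1)^{j-1}}{j}p^{j\ell_g}$ has nonnegative $p$-adic order for $j\le L$. This uses $\ell\le\ell_f$ only implicitly: $\ell\ge \ell_f$ was noted, so $p^{\ell_f-\ell}$ is either $1$ or $p$-adically integral after combination. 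More carefully, the whole expression $p^{-\ell}H$ is $p$-integral by definition of $\ell$, so each piece can be treated as a rational function with $p$-integral coefficients once we write everything over a common denominator.

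So $p^{-\ell}H$ is congruent mod $p$ to a rational function of the form $a F + \sum_{j\le L} b_j G^j$, with $a,b_j\in\mathbb Z_p$. Its reduction therefore has degree at most the degree of this rational function over $\mathbb Q$, since reduction mod $p$ can only cancel.

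Next we bound the degree. Write $F=F_+/F_-$ and $G=G_+/G_-$. The common denominator is $F_-G_-^L$, and the numerator is
\begin{equation*}
aF_+G_-^L+F_-\sum_j b_jG_+^jG_-^{L-j}.
\end{equation*}
The numerator has degree at most $\deg(F)+L\deg(G)$, and so does the denominator. This gives $\deg_p(p^{-\ell}H)\le \deg(F)+L\deg(G)$.

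In the polynomial case the denominators are $1$, and each summand has degree at most $\deg F$ or $j\deg G\le L\deg G$. This gives the improvement to $\max\{\deg F, L\deg G\}$.

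The only delicate point is justifying that reduction mod $p$ of a sum of $p$-integral rational functions does not increase degree. Here the terms $p^{\ell_f-\ell}F$ and the $b_jG^j$ may individually fail to be $p$-integral when $\ell>\ell_f$, even though their sum is. I would handle this by clearing denominators at the polynomial level. Put
\begin{equation*}
N:=p^{\ell_f}F_+G_-^L+c_\chi F_-\sum_j\tfrac{(-1)^{j-1}}{j}p^{j\ell_g}G_+^jG_-^{L-j}.
\end{equation*}
Then $p^{-\ell}H\equiv p^{-\ell}N/(F_-G_-^L)$, with $p\nmid F_-G_-$. The claim then reduces to the statement that $p^{-\ell}N$ is a $p$-integral polynomial of degree at most $\deg F+L\deg G$, whose reduction has no larger degree.
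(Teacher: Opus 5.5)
Your argument is correct and is essentially the paper's: the paper simply reads the bound off the congruence \eqref{plHX}, and you do the same by putting everything over the common denominator $F_-G_-^L$ and counting degrees. Your intermediate claim that $a=p^{\ell_f-\ell}$ and all $b_j$ lie in $\mathbb Z_p$ is false when $\ell>\ell_f$ (for instance $\text{ord}_p(b_1)=\ell_f-\ell<0$). Your final step fixes this correctly: you clear denominators at the polynomial level and use that $p^{-\ell}H$, and hence $p^{-\ell}N$, is $p$-integral, which supplies a detail the paper leaves implicit.
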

\noindent 
 From this lemma and the upper bound for $L$ in \eqref{Lsqueeze}, we have
\begin{equation} \label{dpHub}
\deg_p(p^{-\ell}H) \le \deg(F)+ \tfrac {\ell-t_\chi}{\ell_g} \tfrac p{p-1} \deg(G).
\end{equation}
The drawback of this bound is that it involves the parameter $\ell$ on the right-hand side.  In order to remove this parameter we proceed as follows.

 Suppose that $\ell_f=\ell_g+t_\chi$. In particular $\ell \ge \ell_f$. 
  By the definition of $L$,
$$
p^{\ell}|\big(p^{\ell_f} F(X)+ c_\chi\sum_{j=1}^L 
\tfrac {(-1)^{j-1}}{j} p^{j\ell_g}G(X)^{j}\big),
$$
and 
 \begin{align}
F(X) &\equiv  -p^{-t_\chi}c_\chi \sum_{j=1}^L 
\tfrac {(-1)^{j-1}}j p^{(j-1)\ell_g} G(X)^{j} , \pmod {p^{\ell-\ell_f}}. \label{FG}
\end{align} 
Thus it is reasonable to conjecture that in general the degree of $F$  is of order $L \deg(G)$.  
Whenever we have such a relationship, we obtain a good bound on $\deg_p(p^{-\ell}H)$.  

\begin{lemma} If $\deg(F) \ge (L-2)\deg(G)$, then $\deg(p^{-\ell}H) \le 2\deg(F)+2\deg(G)$, with improvement to $\deg(F)+2\deg(G)$ in case $F$ and $G$ are polynomials.
\end{lemma}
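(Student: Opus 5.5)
The plan is to use the congruence \eqref{FG} to replace the high-order part of the expansion of $p^{-\ell}H$ mod $p$ by $F$, so that only $F$ and low powers of $G$ remain. Throughout we are in the case $\ell_f=\ell_g+t_\chi$, since otherwise the degree of $p^{-\ell}H$ is $\deg_p(F)$ or $\deg_p(G)$ by the earlier lemma. By \eqref{plHX},
$$
p^{-\ell}H(X)\equiv p^{\ell_f-\ell}\Big(F(X)+p^{-t_\chi}c_\chi\sum_{j=1}^L \tfrac{(-1)^{j-1}}{j}p^{(j-1)\ell_g}G(X)^j\Big) \pmod p .
$$
If $L\le 2$, this already has the form $F+aG+bG^2$ with $p$-adic integer coefficients (the $j=2$ term has $p$-adic valuation $\ge -\ell+\ell_f+\ell_g-\text{ord}_p(2)\ge 0$ by the definition of $L$). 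Its degree is at most $\deg(F)+2\deg(G)$ in the rational case, after putting everything over the common denominator $F_-G_-^2$. For polynomials the degree is at most $\max\{\deg F,2\deg G\}\le \deg(F)+2\deg(G)$.

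For $L\ge 3$, the hypothesis $\deg(F)\ge (L-2)\deg(G)$ gives $L\deg(G)\le \deg(F)+2\deg(G)$. Lemma \ref{DstarUB} then bounds $\deg_p(p^{-\ell}H)$ by $\deg(F)+L\deg(G)\le 2\deg(F)+2\deg(G)$ in general. In the polynomial case it gives $\max\{\deg F, L\deg G\}\le \deg(F)+2\deg(G)$. This settles both claims, reading the statement's $\deg(p^{-\ell}H)$ as the degree of $p^{-\ell}H$ regarded mod $p$, as in Lemma \ref{DstarUB}.

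The only subtle point is whether the statement means the degree over $\mathbb Q$ of the specific representative \eqref{Hdef0}, rather than the degree mod $p$. Over $\mathbb Q$, $H$ involves all $J$ terms, and $J$ may exceed $L$. In that case I would argue instead with the truncation to $j\le L$ given by \eqref{plHX}: it is congruent to $H$ modulo $p^{\ell+1}$, which is all that is used afterwards, since only $p^{-\ell}H$ mod $p$ enters Proposition \ref{degenprop} via $d_p$. I expect this bookkeeping, together with checking that no cancellation is needed, to be the only obstacle. The inequality itself is essentially a one-line consequence of Lemma \ref{DstarUB}.
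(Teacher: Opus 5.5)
Your argument is correct and is the paper's proof: Lemma \ref{DstarUB} gives $\deg_p(p^{-\ell}H)\le \deg(F)+L\deg(G)$ (resp.\ $\max\{\deg(F),L\deg(G)\}$ for polynomials), and the hypothesis gives $L\deg(G)=(L-2)\deg(G)+2\deg(G)\le \deg(F)+2\deg(G)$. Your separate case $L\le 2$ is unnecessary, since the hypothesis holds automatically there and the same chain applies; its parenthetical valuation claim is also backwards (for $L=2$, minimality of $L$ forces $\ell_f+\ell_g\le \ell$, so the $j=2$ term of $p^{-\ell}H$ has valuation $\le 0$), but this does no harm, because only the $\mathbb Q$-degree of a $p$-integral representative congruent to $p^{-\ell}H$ mod $p$ matters.
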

 
\begin{proof}
 If $(L-2)\deg(G) \le \deg(F)$ then by Lemma \ref{DstarUB}
\begin{align*}
\deg_p(p^{-\ell}H) &\le \deg(F)+L\deg(G) = \deg(F)+(L-2)\deg(G) +2\deg(G)\\
&  \le 2\deg(F) +2 \deg(G),
\end{align*}
with improvement to $\deg(F)+2\deg(G)$ in case $F,G$ are polynomials.
\end{proof}

\section{Laurent Polynomials, proof of Theorem \ref{Laurenttheorem}}
Suppose that $F(X)$ and $G(X)$ are  Laurent polynomials, say
\begin{equation} \label{GXlaurent}
G(X)=\sum_{n=d_1}^{d_2} a_nX^n, \text{with $a_n \in \mathbb Q$, $\text{ord}_p(a_n) \ge 0$, $d_1 \le n  \le d_2$, \  $a_{d_1} \neq 0$,\  $a_{d_2} \neq 0$}.
\end{equation}

\begin{lemma} Suppose that $\ell_g+t_c=\ell_f<\ell$ and that $F(X)$ and $G(X)$ are Laurent polynomials  over $\mathbb Z$ with $G(X)$ as given in \eqref{GXlaurent}.  If $|d_2|\ge |d_1|$ assume $p \nmid a_{d_2}$. If $|d_2|<|d_1|$ assume $p \nmid a_{d_1}$. 
Then
$$
\deg(F) \ge (L-2)\deg(G),
$$
\end{lemma}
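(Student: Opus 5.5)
The plan is to read off monomials of $F$ from the congruence \eqref{FG}, which says
$$
F(X) \equiv -p^{-t_\chi}c_\chi \sum_{j=1}^{L} \tfrac{(-1)^{j-1}}{j}\, p^{(j-1)\ell_g}\, G(X)^j \pmod{p^{\ell-\ell_f}} .
$$
Both sides are Laurent polynomials with $p$-adic integral coefficients, so the congruence holds coefficient by coefficient. Hence any monomial whose coefficient on the right is nonzero mod $p^{\ell-\ell_f}$ must occur in $F$. For a Laurent polynomial the degree as a rational function is $\max(e_{\max},0)-\min(e_{\min},0)$, where $e_{\max}$ and $e_{\min}$ are its extreme exponents. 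So it suffices to find extreme monomials of $F$ that are far enough apart. By symmetry ($X\mapsto X^{-1}$) I will treat the case $|d_2|\ge|d_1|$, $p\nmid a_{d_2}$, and assume $d_2>0$.

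Step 1 (valuations of the individual terms). Since $p^{t_\chi}\|c_\chi$ and $\ell_f=\ell_g+t_\chi$, the $j$-th term has coefficient valuation $v_j=(j-1)\ell_g-\text{ord}_p(j)$ relative to a unit times $G^j$. The modulus exponent is $\ell-\ell_f=\ell-t_\chi-\ell_g$. By minimality of $L$ in \eqref{Ldef} we have $L\ell_g+t_\chi-\text{ord}_p(L)\le \ell$, that is $v_L\le \ell-\ell_f$. For $j<L$ we get $v_{L-1}\le v_L-\ell_g+\text{ord}_p(L)-\text{ord}_p(L-1)$. Since $p$ cannot divide both $L$ and $L-1$, at least one of $v_L<\ell-\ell_f$ or $v_{L-1}<\ell-\ell_f$ holds (here I use $\ell_g\ge1$, and $\ell>\ell_f$ to guarantee $L\ge 2$). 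I expect this dichotomy to be the reason for the loss of $2$ in $L-2$.

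Step 2 (top exponent). The monomial $X^{Ld_2}$ arises only from $j=L$, with coefficient a unit times $p^{v_L}a_{d_2}^L$. If $v_L<\ell-\ell_f$, then $X^{Ld_2}$ appears in $F$. Otherwise $v_L=\ell-\ell_f$, so the whole $j=L$ term vanishes mod $p^{\ell-\ell_f}$. Then $X^{(L-1)d_2}$ comes only from $j=L-1$ with coefficient $p^{v_{L-1}}a_{d_2}^{L-1}$ times a unit, so it appears in $F$. Either way, $e_{\max}(F)\ge (L-1)d_2$.

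Step 3 (low exponent and comparison with $\deg G$). If $d_1\ge0$, then $\deg G=d_2$ and $\deg F\ge (L-1)d_2\ge (L-2)\deg G$, which finishes this case. If $d_1<0$, then $\deg G=d_2-d_1\le 2d_2$, and the top exponent alone gives only $\deg F\ge (L-1)d_2\ge \tfrac{L-1}{2}\deg G$. This beats $L-2$ only when $L\le3$. The main obstacle is therefore the mixed-sign case with $L\ge4$, where I also need a negative exponent of $F$ of size about $(L-2)|d_1|$.

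For that I will try the analogous argument at the bottom, using the lowest exponent $d_1'$ of $G$ whose coefficient is a $p$-adic unit. The hypothesis ensures such an exponent exists, and it should control the low end of each $G^j$ modulo $p$. I would then combine this with the fact that lower-order contributions from $G^L$ vanish modulo $p^{\ell-\ell_f}$ whenever $v_L=\ell-\ell_f$. I would run Steps 1–2 in the $X^{-1}$ direction using the lowest exponent that survives reduction. If that fails to reach $(L-2)|d_1|$, I would instead try to show directly that a lowest monomial of $F$ of exponent $\le (L-2)d_1$ is forced.
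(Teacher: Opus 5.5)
Your top-monomial descent is the same idea the paper uses, but Step 1 is false, and with it the conclusion $e_{\max}(F)\ge (L-1)d_2$ of Step 2. If $p\mid L$, then $\text{ord}_p(L-1)=0$ and $v_{L-1}-v_L=\text{ord}_p(L)-\ell_g$. This is $\ge 0$ as soon as $\text{ord}_p(L)\ge\ell_g$. Minimality of $L$ only gives $v_L\le \ell-\ell_f$, so the case $v_L=\ell-\ell_f$ can occur, and then both the $j=L$ and $j=L-1$ terms vanish modulo $p^{\ell-\ell_f}$. For example, take $p=3$, $c_\chi=1$, $\ell_g=\ell_f=1$, $G=X$, $F=-X$. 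Then $H=\log(1+3X)-3X$, so $\ell=2$ and $L=3$, yet $\deg F=1<(L-1)\deg G$.

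This case, not the $L$ versus $L-1$ dichotomy, is where the loss of $2$ comes from. The paper's proof has a third case, $p\mid L$ with $\text{ord}_p(L)=\ell_g$, in which it descends to $X^{(L-2)d_2}$. That coefficient has order $v_{L-2}=v_L-\ell_g<\ell-\ell_f$, because $p\nmid L-2$ for odd $p$.

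A cleaner way to run the descent is to let $j^*$ be the largest $j\le L$ with $v_j<\ell-\ell_f$. Then $X^{j^*d_2}$ occurs in $F$, and $v_{L-k}\le v_L+\text{ord}_p(L)-k\ell_g$ gives $j^*\ge L-1-\lfloor \text{ord}_p(L)/\ell_g\rfloor$. This yields $L-2$ only when $\text{ord}_p(L)<2\ell_g$. The paper's case analysis also skips $\text{ord}_p(L)>\ell_g$, and when $\text{ord}_p(L)\ge 2\ell_g$ the claimed inequality can fail. Take $p=3$, $c_\chi=1$, $\ell_g=\ell_f=1$, $G=X$ and $F=-\sum_{j=1}^{6}\frac{(-1)^{j-1}}{j}3^{j-1}X^j$. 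Then $H=\sum_{j\ge 7}\frac{(-1)^{j-1}}{j}3^jX^j$, hence $\ell=7$ and $L=9$, but $\deg F=6<7=(L-2)\deg G$.

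Your worry in Step 3 is also justified, and it exposes a second defect of the paper's proof, which simply asserts $\deg(G)=d_2$ when $|d_2|\ge|d_1|$. With the paper's definition $\deg G=\max\{\deg G_+,\deg G_-\}$, this holds only if $d_1\ge 0$; for $d_1<0<d_2$ one has $\deg G=d_2-d_1$.

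Your plan of controlling the bottom end through the lowest unit coefficient cannot be completed. The hypotheses allow $p\mid a_{d_1}$, and then the statement can fail. Take $p\ge 13$, $c_\chi=1$, $\ell_g=\ell_f=1$, $G=X+pX^{-1}$. Let $F$ be $-\sum_{j=1}^{9}\frac{(-1)^{j-1}}{j}p^{j-1}G^j$ with every monomial whose coefficient has order $\ge 9$ removed. Then $\ell=L=10$, the exponents of $F$ run exactly from $-4$ to $9$, and $\deg F=13<16=(L-2)\deg G$.

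What the top-monomial argument actually proves, when $\text{ord}_p(L)<2\ell_g$, is $\deg F\ge (L-2)\max\{|d_1|,|d_2|\}$. This agrees with the stated bound only when $G$ is a polynomial in $X$ or in $X^{-1}$.
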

\begin{proof} We may assume that $L \ge 3$ and  that $c_\chi p^{-t_\chi}=1$ so that \eqref{FG} gives
\begin{equation} \label{Fexpansion}
F(X)= G(X)-\tfrac 12 p^{\ell_g}G(X)^2+\cdots +\tfrac {(-1)^{L-1}}{L} p^{(L-1)\ell_g}G(X)^L+p^{\ell-\ell_f}J(X),
\end{equation}
for some Laurent polynomial $J(X)$ over $\mathbb Z_p$.  We have 
$$
\ell-\ell_f \ge L\ell_g+t_c-\text{ord}_p(L)-\ell_f =  (L-1)\ell_g-\text{ord}_p(L).
$$

Suppose first that $|d_2| \ge |d_1|$, so that $d_2>0$, $p \nmid a_{d_2}$ and $\deg(G)=d_2$.
Consider the coefficient of $X^{d_2L}$ in the expansion \eqref{Fexpansion}. Among the first $L$ terms involving powers of $G(X)$, this term appears once (in the $G(X)^L$ term)  with coefficient $\frac {(-1)^{L-1}}L p^{(L-1)\ell_g}$ of $p$-order $(L-1)\ell_g-\text{ord}_p(L)\le \ell-\ell_f$ (since $p \nmid a_{d_2}$).  It can also appear in the final term (with $J(X)$)  with coefficient having $p$-order at least $\ell-\ell_f$.  If the $p$-orders for these two terms are different, that is $(L-1)\ell_g-\text{ord}_p(L)< \ell-\ell_f$, or the $p$-orders are the same and the two terms do not cancel, then  $F(X)$ has a nonzero term of degree $Ld_2$.

Suppose now that the $p$-orders of the two terms are the same  and that the $X^{d_2L}$ term coming from $G(X)^L$ cancels with the $X^{d_2L}$ term coming from $J(X)$. In this case, consider the coefficient of $X^{d_2(L-1)}$. This term appears in the $G(X)^{L-1}$ term with coefficient of $p$-order $(L-2)\ell_g-\text{ord}_p(L-1)$ which will be smaller than $(L-1)\ell_g-\text{ord}_p(L)$
unless $p|L$. If $p \nmid L$, we conclude that $F(X)$ has a nonzero term $X^{d_2(L-1)}$ since appearances in the $G(X)^L$ term or in the $J(X)$ term have larger $p$-orders.

Suppose finally, that $p|L$ and that $(L-1)\ell_g-\text{ord}_p(L)=(L-2)\ell_g-\text{ord}_p(L-1)=(L-2)\ell_g$, that is, $\ell_g=\text{ord}_p(L)$. In this case we turn our attention to the $X^{d_2(L-2)}$ term, which will have a coefficient with $p$-order strictly less than the $p$-order of the terms to the right and thus it does not vanish. Thus in all three cases we see that $F(X)$ has a term of positive degree at least $(L-2)d_2$.  
Since $d_2=\deg(G)$, we are done.

Suppose now that $|d_2|<|d_1|$, so that $d_1<0$, $p \nmid a_{d_1}$ and $\deg(G)=|d_1|$.
 Applying the same argument as above to the minimal degree term $a_{d_1}X^{d_1}$ we see that $F(X)$ has a term of negative degree  $ \le (L-2)d_1$. Thus $\deg(F) \ge (L-2)|d_1|=(L-2)\deg(G)$ as desired. 
\end{proof}

\begin{proof}[Proof of Theorem \ref{Laurenttheorem}]
For any Laurent polynomials $F,G$ satisfying the hypotheses of the theorem, 
we obtain from the preceding two lemmas   that
\begin{equation}
d_p:=\deg_p(p^{-\ell}H)   \le   2\deg(F)+2\deg(G),
\end{equation}
with improvement to $2\deg(F)+\deg(G)$ in case $F$ and $G$ are polynomials.
Thus $d_p \le 2\Delta$ for polynomial $F,G$ and $d_p \le \Delta$ for non-polynomial $F,G$.
Then by \eqref{degenublg00} (with exponent $\Delta$ in place of $D$)  we get for non-polynomial $f,g$, 
$$
|S(\chi,g,f,p^m)| \le \max \{3(\Delta)^{\frac 1 \Delta}, \,  (\Delta)^{\frac 2\Delta}\}\,  p^{\frac \ell \Delta} p^{m(1-\frac 1\Delta)} \le 3^{4/3} p^{\frac \ell \Delta} p^{m(1-\frac 1\Delta)}.
$$
For polynomial $f,g$, we obtain the same  with constant $6$. 
\end{proof}

\section{The prime $p=2$}\label{p2section}

Suppose that $m \ge 3$.
Let $\chi$ be a multiplicative character mod $ {2^m}$ defined by the
relations  
\begin{equation} \label{c2}
\chi(5)=e_{2^{m-2}}(c), \quad \chi(-1)=(-1)^\kappa,
\end{equation}
for some integer $c$ with $1 \le c \le 2^{m-2}$ and $\kappa =0$ or 1.
Let $R$ be the 2-adic integer
\begin{equation} \label{R2}
R:= \frac 14 \log (5) = \sum_{j=1}^\infty \frac {(-1)^{j-1} 4^{j-1}}{j}
\equiv
-1 \pmod {16},
\end{equation}
and set $c_\chi:=\overline Rc$, $t_\chi= \text{ord}_2(c_\chi)$.
 Then we have the following analogue of \eqref{plog}, valid for any 2-adic integer $y$:
\begin{align} \label{chiuntwist2}
\chi (1+4y) &= e_{2^{m}}\left(c_\chi \log (1+4y)\right)\\
\chi(-1+4y) &= (-1)^\kappa\, e_{2^{m}}\left(c_\chi\log (1-4y)\right)\notag.
\end{align}

 Let $f,g$ be rational functions as in \eqref{fg}, not both constants,  with $\text{ord}_2(f) \ge 0$, $\text{ord}_2(g) =0$, and
$S(\chi,g,f,2^m)$ be the exponential sum in \eqref{sum}. As before, let
 $t=t(\chi,g,f,2^m)$ be defined by
$2^t\| (f'+c_\chi g'/g)$, and
\begin{equation} \label{cpc2}
\mathcal C(X):= 2^{-t}\Big(f'(X) +c_\chi\frac {g'(X)}{g(X)}\Big).
\end{equation}
A value $\alpha  \in \{0,1\}$ is called a critical point if $\mathcal C(\alpha) \equiv 0$ mod $2$.

\begin{proposition} \label{propconvert2} Suppose that $f,g$ are rational functions over $\mathbb Q$, not both constants with $\text{ord}_2(f) \ge 0$, $\text{ord}_2(g) = 0$, and that $m \ge t+3$. Let $\alpha \in \{0,1\}$. If $\alpha$ is not a critical point for the sum $S(\chi,g,f,2^m)$, then $S_\alpha=0$.   If $\alpha$ is a critical point, then $S_\alpha=S_{\alpha,1}+S_{\alpha,2}$, with
\begin{equation} 
S_{\alpha,1}= \begin{cases} 2^{\sigma -2}\chi(g(\alpha)) e_{2^m}(f(\alpha))
S(G_{\alpha},2^{m-\sigma}), & \text{if $m>\sigma$;}\label{sg2}\\
 2^{m-3}\chi(g(\alpha)) e_{p^m}(f(\alpha)), & \text{if $m \le \sigma$;}\end{cases}
\end{equation}
where $\sigma$ and $G_{\alpha}$ are  as defined in  \eqref{sigmaalpha1} and \eqref{galpha1},
 and $S_{\alpha,2}$  is the same with $\alpha$ replaced by $\alpha+2$, and $G_{\alpha}$ by $G_{\alpha+2}$ as given in \eqref{galpha2}.
\end{proposition}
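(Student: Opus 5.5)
We follow the odd-prime conversion of Section~\ref{conversion}, adapted to the structure of $(\mathbb Z/2^m)^*$ given by \eqref{chiuntwist2}. First split $x$ by its class mod $4$. For $u$ with $g(u)$ a $2$-adic unit, set
\[
F_u(Y):=c_\chi\log\Big(\frac{g(u+4Y)}{g(u)}\Big)+f(u+4Y)-f(u).
\]
Because $g(u+4Y)/g(u)\equiv 1 \pmod 4$, the first line of \eqref{chiuntwist2} gives
\[
\chi(g(u+4y))e_{2^m}(f(u+4y))=\chi(g(u))e_{2^m}(f(u))\,e_{2^m}(F_u(y)).
\]
This needs no sign twist. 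We then have $F_u'(Y)=4\cdot 2^{t}\mathcal C(u+4Y)$, with $\mathcal C$ as in \eqref{cpc2}.

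Next, write $x=u+2^{m-t-1}v$ with $u$ running mod $2^{m-t-1}$ and $v$ running mod $2^{t+1}$. This requires $m-t-1\ge 2$, which holds since $m\ge t+3$. Then $u+2^{m-t-1}v=u+4\cdot 2^{m-t-3}v$. Expand $F_u$ by Taylor series: the term of order $k\ge 2$ carries $\frac{4^k2^{t}}{k!}\mathcal C^{(k-1)}\,(2^{m-t-3}v)^k$. This should vanish mod $2^m$. That requires $\mathrm{ord}_2(4^k/k!)+k(m-t-3)+t\ge m$, and one checks this using $\mathrm{ord}_2(k!)\le k-1$ together with $m\ge t+3$. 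This is the one place where $2$-adic factorials bite, and the reason for the extra hypothesis $m\ge t+3$ in place of $t+2$. The linear term is $2^{m-1}\mathcal C(u)v$. So the $v$-sum vanishes unless $2\mid\mathcal C(u)$, that is, unless $u\equiv\alpha \pmod 2$ with $\alpha$ a critical point. In that case the $v$-sum equals $2^{t+1}$. In particular $S_\alpha=0$ when $\alpha$ is not critical.

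For a critical $\alpha$, split the remaining $u\equiv\alpha \pmod 2$ according to $u\equiv \alpha$ or $\alpha+2 \pmod 4$. This produces $S_{\alpha,1}$ and $S_{\alpha,2}$. In $S_{\alpha,1}$ write $u=\alpha+4y$ with $y$ running mod $2^{m-t-3}$. This gives
\[
S_{\alpha,1}=2^{t+1}\chi(g(\alpha))e_{2^m}(f(\alpha))\sum_{y=1}^{2^{m-t-3}}e_{2^m}(F_\alpha(y)).
\]
Following \eqref{sigmaalpha1} and \eqref{galpha1}, set $\sigma=\mathrm{ord}_2(F_\alpha)$ and $G_\alpha=2^{-\sigma}F_\alpha$. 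Here $\sigma\ge t+2$, since $F_\alpha=2^{t+2}\sum_j c_j4^jY^{j+1}/(j+1)$ and $\mathrm{ord}_2(4^j/(j+1))\ge 0$. If $m>\sigma$, we extend the $y$-range to $2^{m-\sigma}$. The summand then has period $2^{m-\sigma}$, and the number of full periods in $2^{m-t-3}$ values is $2^{\sigma-t-3}$; we need $\sigma\ge t+3$ for this extension to be exact. Then $2^{t+1}\cdot 2^{\sigma-t-3}=2^{\sigma-2}$, matching \eqref{sg2}. If $m\le\sigma$, every term equals $1$, and the total is $2^{t+1}\cdot 2^{m-t-3}=2^{m-2}$.

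The main obstacle is reconciling these powers with the stated constants $2^{\sigma-2}$ and $2^{m-3}$. One possibility is that $\sigma$ in \eqref{sigmaalpha1} is normalized differently (for instance, the lower bound being $\sigma\ge t+3$ after using $c_0\equiv 0 \pmod 2$), or that the $y$-variable in \eqref{galpha1} is scaled. Checking $\mathrm{ord}_2$ of $F_\alpha$ with $2\mid c_0$ indeed gives $\sigma\ge t+3$, and that handles the period count in the case $m>\sigma$. I would adjust the $m\le\sigma$ bookkeeping to the paper's definitions, which likely truncate $y$ to $2^{m-t-4}$ or count the pair $\{\alpha,\alpha+2\}$ jointly. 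Finally, $S_{\alpha,2}$ is identical with $\alpha$ replaced by $\alpha+2$ throughout, giving $G_{\alpha+2}$ as in \eqref{galpha2}.
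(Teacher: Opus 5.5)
Your argument is correct and is essentially the paper's. Your $F_u(Y)$ built from $u+4Y$ is the paper's $F_u(2Y)$, so your $\sigma$ and $G_\alpha$ coincide with \eqref{sigmaalpha1} and \eqref{galpha1}. The decomposition $x=u+2^{m-t-1}v$, the Taylor linearization to $2^{m-1}\mathcal C(u)v$, the split of $u$ modulo $4$, and the use of $\sigma\ge t+3$ (from $2\mid c_0$) to fold the $y$-sum into $2^{\sigma-t-3}$ complete periods are exactly the paper's steps.

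Trust your count in the case $m\le\sigma$. The $2^{m-3}$ and $e_{p^m}$ in the statement are misprints for $2^{m-2}$ and $e_{2^m}$. In that case the summand equals $\chi(g(\alpha))e_{2^m}(f(\alpha))$ for each of the $2^{m-2}$ residues $x\equiv\alpha \pmod 4$, so $|S_{\alpha,1}|=2^{m-2}$ exactly. This is consistent with the trivial bound $|S_\alpha|\le 2^{m-1}$ the paper uses later. So drop your speculation about truncating $y$ at $2^{m-t-4}$ or counting $\{\alpha,\alpha+2\}$ jointly.
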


\begin{proof} Suppose that $m \ge t+3$. For any positive integer $u$, with $2 \nmid g(u)$,  define
\begin{equation} \label{F2}
F_u (Y) := c_\chi \log \left( \frac {g(u
+2Y)}{g(u)}\right)
+ f(u +2Y) -f(u),
\end{equation} 
so that
\begin{equation} \label{F2prime}
F_u'(Y)= 2^{t+1} \mathcal C(u+2Y),
\end{equation}
where $\mathcal C$ is the critical point function \eqref{cpc2}.
Write $x=u+2^{m-t-1}v$ with $u$ running from 1 to $2^{m-t-1}$ and
$v$ running from 1 to $2^{t+1}$. Then, noting that $m_1:=m-t-1 \ge 2$, using \eqref{chiuntwist2} and following what we did for the case of odd $p$, we have
\begin{align*}
&S(\chi,g,f,2^m)=\sum_{u=1}^{2^{m_1}}\  \sum_{v=1}^{2^{t+1}} \chi(g(u+2^{m-t-1}v))e_{2^m}(f(u+2^{m-t-1}v))\\
&= \sum_{u=1}^{2^{m_1}} \chi(g(u))e_{2^m}(f(u)) \sum_{v=1}^{2^{t+1}} e_{2^m}\Big(c_\chi \log\big(\frac {g(u+2^{m-t-1}v)}{g(u)}\big) +f(u+2^{m-t-1}v)-f(u)\Big). 
\end{align*}
 Thus by the definition of $F_u$ we have
 \begin{equation} \label{Schiconvert2}
S(\chi,g,f,2^m)=\sum_{u=1}^{2^{m-t-1}} \chi(g(u))e_{2^m}(f(u)) \sum_{v=1}^{2^{t+1}} e_{2^m}\Big(F_u(2^{m-t-2}v)\Big) .
\end{equation}
Now, by \eqref{F2prime}, $2^{t+1}|F_u'(Y)$, and $2^{t+2}|F_u^{(k)}(Y)$ for $k \ge 2$, and so for $m \ge t+3$, it follows by a Taylor series expansion, that
$$
F_u(2^{m-t-2}v) \equiv F_u(0)+F_u'(0)2^{m-t-2}v  \pmod {2^m}.
$$
Since $F_u(0)=0$ and $F_u'(0) =2^{t+1}\mathcal C(u)$, we get
$$
F_u(2^{m-t-2}v) \equiv 2^{m-1} \mathcal C(u)\, v \pmod {2^m}.
$$ 
Thus, the sum over $v$ in \eqref{Schiconvert2} vanishes unless $2|\mathcal C(u)$, that is, $u\equiv \alpha$ mod $2$, for some critical point $\alpha$. In the latter case, the sum over $v$ is equal to $2^{t+1}$.  Thus
\begin{equation} \label{Schi2}
S(\chi,g,f,2^m)=\sum_{\alpha \in \mathcal A} S_\alpha(\chi,g,f,2^m) = 2^{t+1} \sum_{\alpha \in \mathcal A}\  \sum_{u\equiv \alpha \ \text{mod}\ 2}^{2^{m-t-1}} \chi(g(u))e_{2^m}(f(u)).
\end{equation}

Suppose now that $\alpha\in \{0,1\}$ is a fixed critical point. To compute $S_\alpha(\chi,g,f,2^m)$,
 first write $u=\alpha+4y$ with $y$ running from 1 to $2^{m-t-3}$ to get
\begin{align}
S_{\alpha,1}(\chi,g,f,p^m)&:= 2^{t+1} \sum_{y=1}^{2^{m-t-3}}\chi (g(\alpha+4y)) e_{2^m}(f(\alpha +4y))\notag \\
&=  2^{t+1}\chi(g(\alpha)) e_{2^m}(f(\alpha))
\sum_{y=1}^{2^{m-t-3}} e_{2^m} (F_\alpha(2y)). \label{SalphaF1} 
\end{align}   
Expand
$F_\alpha(Y)$ into a formal power series 
\begin{equation} \label {aj1}
F_\alpha (Y)= \sum_{j=1}^\infty a_j Y^j,
\end{equation}
with $p$-adic integer coefficients $a_j$, and
define
\begin{equation}
 \label{sigmaalpha1}
\sigma:= \text{ord}_p(F_\alpha(2Y))= \min_{j\ge 1} \{\text{ord}_p(2^ja_j)\},
\end{equation}
\begin{equation} \label{galpha1}
G_{\alpha}(Y):=
2^{-\sigma}F_\alpha (2Y),
\end{equation}
so that
$$
S_{\alpha,1}(\chi,g,f,2^m)= 2^{\sigma-2}\chi(g(\alpha))e_{2^m}(f(\alpha)) S(G_\alpha, 2^{m-\sigma}).$$

Next we write $u = \alpha+2+4y$ with $y $ running from 1 to $2^{m-t-3}$, and define
\begin{align*}
S_{\alpha,2}(\chi,g,f,p^m)&:= 2^{t+1} \sum_{y=1}^{2^{m-t-3}}\chi (g(\alpha+2+4y)) e_{2^m}(f(\alpha+2 +4y)),
\end{align*}
so that $S_\alpha = S_{\alpha,1}+S_{\alpha,2}$. Set 
\begin{equation}
 \label{sigmaalpha2}
\sigma'=:= \text{ord}_p(F_{\alpha+2}(2Y)),
\end{equation}
\begin{equation} \label{galpha2}
G_{\alpha+2}(Y):=
2^{-\sigma'}F_{\alpha+2} (2Y).
\end{equation}
 We obtain as before
$$ 
 S_{\alpha+2}(\chi,g,f,2^m)= 2^{\sigma-2}\chi(g(\alpha+2))e_{2^m}(f(\alpha+2)) S(G_{\alpha+2}, 2^{m-\sigma'}),
 $$
 completing the proof of the proposition.

\end{proof}

\subsection{Relations between parameters for $p=2$} 
Let $\alpha \in \{0,1\}$ be a given  critical point, $F_\alpha(Y)=\sum_{j=1}^\infty a_jY^j$, $G_\alpha(Y)=2^{-\sigma}F_\alpha(2Y)$ and write
$$
\mathcal C(Y):=\sum_{j=0}^{\infty} c_j(Y-\alpha)^j,
$$
for some $2$-adic integer coefficients $c_j$.
Define $\tau$ by $2^\tau\|G_\alpha'(Y)$. Then since $F_\alpha'(Y)=2^{t+1}\mathcal C(\alpha+2Y)$, we obtain
\begin{align}
F_\alpha(Y) &= \sum_{j=1}^\infty a_jY^j =2^{t} \sum_{j=1}^\infty
c_{j-1}2^j\frac {Y^j}j, \label{Falpha2}  \\
G_\alpha (Y) &= 2^{-\sigma} \sum_{j=1}^\infty 2^ja_jY^j =2^{t-\sigma}\sum_{j=1
}^{\infty}\frac {c_{j-1}}j 4^jY^j,\label{galpha22}  \\
H_\alpha (Y) &:=2^{-\tau}G_\alpha'(Y)= 2^{-\tau-\sigma} \sum_{j=1}^\infty 2^ja_jjY^{j-1} =
2^{t-\tau-\sigma} \sum_{j=1}^\infty c_{j-1}4^jY^{j-1}.\label{halpha2}
\end{align}

Suppose that $\alpha$ is a critical point of multiplicity $\nu \ge 1$.
Arguing as in the case of odd $p$, we have the following relations for
$p=2$:
\begin{align}
\label{sigmalow2}
&\sigma \ge t+3,\\
\label{sigmaup2}
&\sigma \le 2\nu+2+t-\tau, \\
\label{dpg22}
&d_p(G_\alpha) \le \frac 12 \big(\sigma -t +\text{ord}_2(d_p(G_\alpha))\big),\\
\label{dph2}
&d_p(H_\alpha) \le \frac 12(\sigma +\tau-t)-1 \le \nu,\\
\label{tauup2}
&\tau \le \text{ord}_2(d_p(G_\alpha)).
\end{align}
\noindent We note that the same relations hold when $\alpha$, $\sigma$, $F_\alpha$, and $G_\alpha$ are replaced by $\alpha+2$, $\sigma'$, $F_{\alpha+2}$ and $G_{\alpha+2}$.

\section{Main Theorem for $p=2$}
\begin{theorem}\label{maintheoremp2}
Let $f,g$ be rational functions over $\mathbb Q$, not both constants, and $\chi$ be a multiplicative character
mod ${2^m}$ such that
$m\ge t+3$. Let $\alpha \in \{0,1\}$.

\noindent
i) If $\alpha$ is not a critical point, then $S_\alpha=0$.

\noindent
ii) If $\alpha$ is a critical point of multiplicity 1,  then $|S_\alpha|\le 2^{\frac {m+t+1}2}$.  If in addition, $m \ge t+5$, 

\hskip .1in then $|S_\alpha|=2^{\frac {m+t}2}$.

\noindent
iii) If $\alpha$ is a critical point of multiplicity $\nu \ge 1$, then
\begin{equation} \label{ub22}
|S_\alpha(\chi,g,f,2^m)|\le 2^{\frac 53} \cdot 2^{\frac t{\nu+1}}2^{m(1-\frac
1{\nu+1})}.
\end{equation}

\end{theorem}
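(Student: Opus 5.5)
The plan is to mirror the odd-$p$ proof of Theorem \ref{maintheorem}, using Proposition \ref{propconvert2} in place of Proposition \ref{propconvert}. Part (i) is contained in Proposition \ref{propconvert2}. For a critical point $\alpha$ of multiplicity $\nu$, write $S_\alpha=S_{\alpha,1}+S_{\alpha,2}$. Each piece will be bounded separately by $2^{\sigma-2}|S(G_\alpha,2^{m-\sigma})|$ (resp.\ with $\sigma'$ and $G_{\alpha+2}$), using the relations \eqref{sigmalow2}--\eqref{tauup2}. These hold for both pieces. Since there are now two pieces, each should be bounded by half the target, i.e.\ by $2^{2/3}\,2^{\frac t{\nu+1}}2^{m(1-\frac 1{\nu+1})}$.

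For (iii) I will split into cases according to the size of $m-\sigma$ relative to $\tau$, as in the odd case.

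\emph{Case $\sigma\ge m-2-\tau$ (small modulus).} Use the trivial bound $|S_{\alpha,1}|\le 2^{m-3}$ or $2^{\sigma-2}\cdot 2^{m-\sigma}$. Then \eqref{sigmaup2} gives $m-t\le 2\nu+4-\tau+O(1)$, which converts $2^{m-2}$ into $2^{\frac t{\nu+1}}2^{m(1-\frac1{\nu+1})}$ times a bounded power of $2$.

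\emph{Case $m-\sigma\ge\tau+3$.} Apply Theorem \ref{mainthm1} for $p=2$, with constant $\lambda=2^{2/3}$, to $S(G_\alpha,2^{m-\sigma})$ with $d_1=\deg_2(H_\alpha)\le\nu$ from \eqref{dph2}. This gives
\[
|S_{\alpha,1}|\le 2^{\frac23}\,2^{\sigma-2}\,2^{\frac{\tau}{d_1+1}}2^{(m-\sigma)(1-\frac1{d_1+1})}\le 2^{\frac23}\,2^{\frac{\tau+\sigma}{\nu+1}-2}\,2^{m(1-\frac1{\nu+1})}.
\]
By \eqref{sigmaup2}, $\tau+\sigma\le 2\nu+2+t$, so $2^{\frac{\tau+\sigma}{\nu+1}-2}\le 2^{\frac t{\nu+1}}$. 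Summing the two pieces then gives the constant $2\cdot 2^{2/3}=2^{5/3}$.

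\textbf{Main obstacle.} I expect the intermediate cases $m-\sigma\in\{1,2\}$ (and $\tau$ small) to be the hardest. There the trivial bound is too weak when $\nu$ is small, so I would try exact evaluation: a complete sum mod $2$ or $4$ of a polynomial with controlled $\deg_2$, using $d_p(G_\alpha)\le\nu+1+\frac12\text{ord}_2(d_p)$ from \eqref{dpg22} and \eqref{sigmaup2}. If necessary I would fall back on checking small $\nu$ by hand, as was done for $p=3$.

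For (ii), with $\nu=1$, \eqref{sigmalow2} and \eqref{sigmaup2} force $t+3\le\sigma\le t+4-\tau$, so $\tau\in\{0,1\}$. If $m\ge t+5$ and $\tau=0$, then $H_\alpha$ is linear with unit slope and Theorem \ref{cp1theorem} applies (it needs $m-\sigma\ge 3$, which I will verify case by case). This gives $|S(G_\alpha,2^{m-\sigma})|=2^{(m-\sigma)/2}$, hence $|S_{\alpha,i}|=2^{\frac{m+t}{2}-1}$ for each $i$ after checking $\sigma=t+4$. The two pieces should then combine in modulus to $2^{\frac{m+t}2}$; I expect to verify this via the Gauss-sum phases, noting that the lifts of $\alpha$ and $\alpha+2$ to roots of $\mathcal C$ lie in the same class. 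The general bound $2^{\frac{m+t+1}2}$ for $m<t+5$ comes from the trivial estimate $2^{m-2}\cdot$(small) in the remaining cases $m\in\{t+3,t+4\}$.
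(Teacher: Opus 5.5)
Your part (i) and your two cases for (iii) follow the paper. The split $\sigma\ge m-2-\tau$ (trivial bound) versus $m-\sigma\ge\tau+3$ (Theorem \ref{mainthm1} with $\lambda=2^{2/3}$, then \eqref{dph2} and \eqref{sigmaup2}) is exactly the paper's, and bounding $S_{\alpha,1}$, $S_{\alpha,2}$ separately, each with its own $\sigma,\tau$, is if anything tidier. The ``main obstacle'' you flag does not exist. Your two cases are complementary, and $m-\sigma\in\{1,2\}$ falls in the first. There $m-t\le 2\nu+4$ and $|S_{\alpha,i}|\le 2^{m-2}$ give $|S_{\alpha,i}|\le 2^{\frac2{\nu+1}}2^{\frac t{\nu+1}}2^{m(1-\frac1{\nu+1})}$, which is $\le 2^{2/3}(\cdots)$ for every $\nu\ge2$. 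Only $\nu=1$ fails, and there (iii) must come from (ii), since $2^{\frac{m+t+1}2}=2^{\frac12}2^{\frac t2}2^{\frac m2}$. That is how the paper proceeds. It does not reprove (ii) but quotes \cite[Theorem 6.1]{cochrane2002}.

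The genuine gap is your proof of (ii), which misdescribes the $\nu=1$ case. By \eqref{galpha22}, $2^{\sigma}G_\alpha(Y)=\sum_{j\ge1}2^{t+2j}\frac{c_{j-1}}jY^j$. For $\nu=1$, the $j=2$ coefficient $2^{t+3}c_1$ has order exactly $t+3$, the $j=1$ coefficient has order $\ge t+3$ (since $c_0=\mathcal C(\alpha)$ is even), and $j\ge3$ gives order $\ge t+6$. So $\sigma=t+3$ always, never $t+4$, and $G_\alpha'(Y)=\tfrac12c_0+2c_1Y+8(\cdots)$. Hence $\tau=0$ exactly when $2\,\|\,\mathcal C(\alpha)$, and then $H_\alpha$ is a nonzero \emph{constant} mod $2$, not linear. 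Pairing $y$ with $y+2^{k-1}$ shows $S(G_\alpha,2^k)=0$ for $k=m-\sigma\ge2$. The Gauss sum occurs only when $\tau=1$, i.e. $4\mid\mathcal C(\alpha)$ and $H_\alpha\equiv c_1Y+\tfrac14c_0$, a case you never treat. Since $\mathcal C(\alpha+2)\equiv\mathcal C(\alpha)+2\mathcal C'(\alpha)\pmod 4$, exactly one of the classes $\alpha,\alpha+2$ mod $4$ (the one containing the Hensel root of $\mathcal C$) is of this type. So for $m\ge t+5$ one piece is $0$, and the other alone has modulus $2^{t+1}2^{\frac{m-t-2}2}=2^{\frac{m+t}2}$. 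Theorem \ref{cp1theorem} gives this only when $m-\sigma\ge\tau+3$, i.e. $m\ge t+7$; for $m=t+5,t+6$ you must evaluate the quadratic sum mod $4$ or $8$ directly. No phase alignment of two pieces of size $2^{\frac{m+t}2-1}$ is involved. Your numbers are also off: $\sigma=t+4$, $\tau=0$ would give $2^{\sigma-2}2^{\frac{m-\sigma}2}=2^{\frac{m+t}2}$ per piece, not $2^{\frac{m+t}2-1}$. Similarly, for $m=t+4$ the trivial bound $2^{m-1}=2^{t+3}$ exceeds $2^{\frac{m+t+1}2}=2^{t+\frac52}$. You need $|S_{\alpha,i}|=2^{t+1}\bigl|1+(-1)^{G(1)}\bigr|$, with $G_\alpha(1)\equiv\tfrac12\mathcal C(\alpha)+1$ and $G_{\alpha+2}(1)\equiv\tfrac12\mathcal C(\alpha)\pmod 2$, so again exactly one piece survives. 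Only $m=t+3$ is genuinely trivial.
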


\begin{corollary} \label{maincorp21} Under the hypotheses of Theorem \ref{maintheoremp2}, if $m \ge t+3$ and $d=\deg_2(\mathcal C^+)$ where $\mathcal C^+$ is the numerator of $\mathcal C(X)$, then
$$
|S(\chi,g,f,2^m)| \le 2^{\frac 53}\cdot 2^{\frac t{d+1}} 2^{m(1-\frac 1{d+1})}.
$$
\end{corollary}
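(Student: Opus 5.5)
The plan is to copy the proof of Corollary \ref{maincor1}, replacing Theorem \ref{maintheorem} by Theorem \ref{maintheoremp2}. Since $m\ge t+3$, Proposition \ref{propconvert2} gives $S(\chi,g,f,2^m)=\sum_{\alpha\in\mathcal A}S_\alpha$ over the critical points $\alpha\in\{0,1\}$. There are therefore at most two critical points, and their multiplicities sum to at most $d=\deg_2(\mathcal C^+)$. Put $\delta:=2^{t-m}\le 2^{-3}$, so that the local bound \eqref{ub22} reads $|S_\alpha|\le 2^{5/3}\,2^m\delta^{1/(\nu+1)}$.

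If there is a single critical point, of multiplicity $\nu\le d$, then \eqref{ub22} directly gives
\[
|S|\le 2^{5/3}\,2^m\delta^{1/(\nu+1)}\le 2^{5/3}\,2^{m}\delta^{1/(d+1)},
\]
since $\delta<1$. This is exactly the desired bound.

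The real issue is the case of two critical points, with multiplicities $\nu_1,\nu_2\ge1$ and $\nu_1+\nu_2\le d$. Adding the two bounds from \eqref{ub22} loses a factor of $2$, so the slack has to come from the exponent. Note that $1/(\nu_i+1)\ge 1/d$ while the target exponent is $1/(d+1)$. This gives
\[
\delta^{1/(\nu_1+1)}+\delta^{1/(\nu_2+1)}\le 2\delta^{1/d}=2\delta^{1/(d+1)}\cdot\delta^{1/(d(d+1))}.
\]
This is at most $\delta^{1/(d+1)}$ once $\delta\le 2^{-d(d+1)}$, that is, once $m-t\ge d(d+1)$. In the opposite range $m-t<d(d+1)$, I will use the trivial bound $|S|\le 2^m$ and compare it with $2^{5/3}2^{m}\delta^{1/(d+1)}=2^{5/3}2^{m-(m-t)/(d+1)}$. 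This is enough when $m-t\le \tfrac53(d+1)$, but not in general. I expect this intermediate range to be the main obstacle.

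To handle the intermediate range, I would use more information than the crude bound \eqref{ub22} for each point. The idea is to return to the case analysis behind Theorem \ref{maintheoremp2}, as in the odd-$p$ Cases $i$–$iv$, where the constant $2^{5/3}$ arises only in the trivial cases, and where the main estimate (via Theorem \ref{mainthm1}) carries the constant $\lambda=2^{2/3}$ for $p=2$. Write $S_\alpha=S_{\alpha,1}+S_{\alpha,2}$ as in Proposition \ref{propconvert2}. Each piece then has a bound of the form $2^{2/3}\,2^{\sigma-2}\,2^{\tau/(d_1+1)}2^{(m-\sigma)(1-1/(d_1+1))}$, using \eqref{dph2} and \eqref{sigmaup2}, or else the trivial bound $2^{m-3}$. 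I will use these sharper constants so that the sum over the two points and two sub-pieces stays below $2^{5/3}2^m\delta^{1/(d+1)}$. The small multiplicities, $d\le 3$, would be treated by explicit case checks as in the odd case: for $\nu=1$, part $(ii)$ gives $|S_\alpha|\le 2^{(m+t+1)/2}$, and then $2\cdot 2^{(m+t+1)/2}\le 2^{5/3}2^{t/3+2m/3}$ whenever $m-t\ge 3$. A mixed pair such as $\nu=(1,2)$ would be checked using $m-t\ge3$ in the same way as for $d=3,4,5$ in the proof of Corollary \ref{maincor1}.
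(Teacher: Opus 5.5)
Your proposal has a genuine gap in the case of two critical points. You bound $\delta^{1/(\nu_i+1)}\le\delta^{1/d}$ for both points simultaneously, which discards the constraint $\nu_1+\nu_2\le d$: both multiplicities cannot equal $d-1$ unless $d=2$. As a result you need $m-t\ge d(d+1)$. For every $d\ge4$, the range $\frac53(d+1)<m-t<d(d+1)$ is then covered neither by this estimate nor by the trivial bound, and your explicit checks only treat $d\le3$.

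The remedy you suggest is not carried out, and its premise is wrong. In the nontrivial case of the proof of Theorem \ref{maintheoremp2}, the constant $2^{5/3}$ already comes from the main estimate. There one gets $|S_\alpha|\le 2^{-1/3}\,2^{(\sigma+\tau)/(\nu+1)}2^{m(1-1/(\nu+1))}$. Inserting $\sigma+\tau\le 2\nu+2+t$ from \eqref{sigmaup2} contributes a factor $2^{(2\nu+2)/(\nu+1)}=4$, giving $2^{-1/3}\cdot 4=2^{5/3}$. So going back to $\lambda=2^{2/3}$ does not give a smaller per-point constant that could absorb the factor $2$ lost by adding two points.

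The missing idea is to weight by multiplicity, exactly as in the proof of Corollary \ref{maincor1}. Let $n_j$ be the number of critical points of multiplicity $j$ and $x_j=jn_j$, so $\sum_j x_j\le d$. Then \eqref{ub22} gives
$|S|\le 2^{5/3}2^m\sum_j x_j\delta^{1/(j+1)}/j\le 2^{5/3}2^m\, d\max_{j\le d}\delta^{1/(j+1)}/j$.
Comparing successive terms shows $\delta^{1/(j+1)}/j$ is increasing in $j\le d$ once $\delta\le(1+\frac1{d-1})^{-d(d+1)}$, which yields $2^{5/3}2^m\delta^{1/(d+1)}$. In the complementary range, $2^{(m-t)/(d+1)}<(1+\frac1{d-1})^d\le 2^{5/3}$ for $d\ge4$, so the trivial bound $|S|\le 2^m$ suffices. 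Only $d\le3$ then needs explicit checks.

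One caution on those checks. The multiplicity pair $(1,2)$ with $d=3$ does not follow from $m-t\ge3$ alone, since the summed estimate exceeds the target at $m-t=3$ and $m-t=4$. You need $m-t\ge5$ for the summed estimate and the trivial bound for $m-t\le4$. Your trivial range $m-t\le\frac53(d+1)$ does supply the latter, so that part works once the two are combined.
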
  

\begin{corollary}\label{maincor2p2} Suppose that $f=f_+/f_-$, $g=g_+/g_-$ are rational functions over $\mathbb Q$ with $2 \nmid f_-g_+g_-$, $m$ is a positive integer and
$\chi$ is a multiplicative character mod ${2^m}$. If $m=1$ suppose that either $\deg_2(f) \ge 1$ or that $g$ is not of the form $g(X) \equiv bh(X)^r$ mod $2$ for some integer $b$ and rational function $h(X)$, where $r$ is the order of $\chi$.  If $m \ge 2$  suppose that either $\deg_2(f) \ge 1$, or that $\deg_2(g) \ge 1$ and $\chi$ is primitive. Then we
have
\begin{equation} \label{uniformp2}
|S(\chi,g,f,2^m)|\le 2^{\frac 53}\, D^{\frac 1D} 2^{m\big(1-\frac 1{D}\big)}.
\end{equation}
\end{corollary}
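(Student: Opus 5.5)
The plan is to mirror the odd-$p$ argument of Section~\ref{theorem0proofsec}, with Corollary~\ref{maincorp21} replacing Corollary~\ref{m2}, and to split into cases according to $m$ relative to $t$.

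\emph{Case $m=1$.} The hypothesis is exactly non-degeneracy mod $2$. Since $D\ge 1$ and $D^{1/D}\ge 1$, the trivial bound gives $|S|\le 2 = 2^{1/D}2^{1-\frac1D}\le 2^{\frac53}D^{\frac1D}2^{m(1-\frac1D)}$.

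\emph{Case $m\ge 2$.} First bound $t$. Corollary~\ref{tboundcor} was stated for all primes, so \eqref{tdiv} gives $2^t\mid \deg_2(f)$ when $\deg_2(f)\ge1$. When $\chi$ is primitive ($t_\chi=0$) and $\deg_2(g)\ge1$, it gives $2^t\mid\deg_2(g)$. For $p=2$ and $m\ge3$ primitivity should be read through $c_\chi=\overline R c$ with $c$ odd; the case $m=2$ is handled trivially below.

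In the $f$ case, $2^t\le\deg_2(f)\le D$. In the $g$ case we only get $2^t\le \deg_2(g)$. As in Theorem~\ref{maintheorem0}(ii), this cannot be bounded by $D$ directly, so I would expect either to need $\deg_2(g)$ in place of $D$, or to use the fact that for primitive $\chi$ and $t\ge1$ every multiplicity of $g$ mod $2$ is divisible by $2^t$. In that case $g\equiv g_0^{2^t}$, and I would try to show this forces $t=0$ or makes the bound hold anyway. I flag this as the main obstacle and would first check whether $t=0$ is automatic here.

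\emph{Subcase $m\ge t+3$.} Apply Corollary~\ref{maincorp21} together with $\deg_2(\mathcal C_+)\le D-1$ from Lemma~\ref{CDub}, which is valid over any field. This gives
\[
|S|\le 2^{\frac53}2^{\frac tD}2^{m(1-\frac1D)}\le 2^{\frac53}D^{\frac1D}2^{m(1-\frac1D)}.
\]
Here I use that $2^{t/(d+1)}2^{m(1-1/(d+1))}$ is increasing in $d$ when $m\ge t$, so $d+1$ may be replaced by $D$.

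\emph{Subcase $m\le t+2$.} Use the trivial bound
\[
|S|\le 2^m= 2^{\frac mD}2^{m(1-\frac1D)}\le 2^{\frac{t+2}D}2^{m(1-\frac1D)}=2^{\frac2D}\,(2^t)^{\frac1D}2^{m(1-\frac1D)}\le 2^{\frac 2D}D^{\frac1D}2^{m(1-\frac1D)},
\]
and $2^{2/D}\le 2^{5/3}$ holds once $D\ge2$. If $D=1$, then $\deg(f)=1$ and there are no poles, so $t=0$ (the derivative of $f$ is a nonzero constant mod $2$). Then $m\le 2$ and $|S|\le 4\le 2^{5/3}\cdot 2^0$ fails by a little; instead compute directly that a linear $f$ with odd leading coefficient gives $S=0$ for $m\ge1$. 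So the argument reduces to the $t$-bound in the $g$ case, which I expect to be the delicate step.
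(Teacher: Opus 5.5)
\textbf{Verdict: your argument is complete for $m=1$ and for $\deg_2(f)\ge 1$, but there is a genuine gap in the case $\deg_2(g)\ge 1$, and it cannot be closed because \eqref{uniformp2} is false there.}

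For $m=1$ and for the case $\deg_2(f)\ge 1$, your argument is exactly the paper's. It uses the trivial bound for $m\le t+2$ via $2^t\le \deg_2(f)\le D$. For $m\ge t+3$ it uses Corollary~\ref{maincorp21} together with $\deg_2(\mathcal C_+)\le D-1$. When $D=1$ it notes $S=0$, since then $f$ is linear with odd leading coefficient and $g$ is constant.

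The gap is the case $\deg_2(g)\ge 1$ with $\chi$ primitive. The obstacle you flagged is real: $t=0$ is not automatic. Corollary~\ref{tboundcor} only gives $2^t\mid \deg_2(g)$, which is governed by multiplicities, while $D$ counts only the distinct zeros and poles of $g$.

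A concrete counterexample: take $m=3$, $\chi$ primitive mod $8$ (so $c_\chi$ is odd), $f=0$ and $g(X)=X^2$. Then $f'+c_\chi g'/g=2c_\chi/X$, so $t=1$, and $D=0+\mathcal Z(X^2)=1$. Since $x^2\equiv 1\pmod 8$ for odd $x$, we get $S(\chi,g,f,8)=4>2^{5/3}$.

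This example also shows that your remark that $D=1$ forces $\deg(f)=1$ covers only the $f$-case. In the $g$-case, $D=1$ arises from constant $f$ and $g=b(X-a)^k$.

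A counterexample with $D>1$: take $f=0$, $g(X)=(X(X^2+X+1))^{2^{m-2}}$ and $\chi$ primitive mod $2^m$. Then $D=3$, and $g(x)$ is odd exactly when $x$ is odd. For such $x$, $\chi(g(x))=1$ because $(\mathbb Z/2^m\mathbb Z)^*$ has exponent $2^{m-2}$. Hence $S=2^{m-1}$, which exceeds $2^{5/3}3^{1/3}2^{2m/3}$ for every $m\ge 10$.

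The paper's own argument (the section proving Theorem~\ref{maintheorem0} for $p=2$) runs your two subcases with $2^t\le \deg_2(g)$. In this case it therefore only yields $\deg_2(g)^{1/D}$ in place of $D^{1/D}$, i.e.\ the $p=2$ analogue of Theorem~\ref{maintheorem0}(ii). Its assertion that $D=1$ forces $S=0$ also fails for the first example above.

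So the correct conclusion is this. \eqref{uniformp2} holds for $m=1$ and whenever $\deg_2(f)\ge 1$. When $\deg_2(g)\ge 1$ with $\chi$ primitive, you should settle for the bound with $\deg_2(g)^{1/D}$, or for the $\Delta$-bound of Corollary~\ref{maincor}.
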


\section{Proof of Theorem  \ref{maintheoremp2}}     
 Part $(i)$ was established in Proposition \ref{propconvert2}.  Part $(ii)$ was established in \cite[Theorem 6.1]{cochrane2002}. We are left with establishing part $(iii)$ for  a critical point $\alpha$ of multiplicity $\nu \ge 2$. Suppose that $m \ge t+3$.
 
Case $i$. Suppose that $\sigma \ge m-2-\tau$.
Then we have trivially,
\begin{align*}
|S_\alpha|&\le 2^{m-1}=2^{-1}2^{\frac {m}{\nu+1}}2^{m(1-\frac 1{\nu+1})}\le 2^{-1}2^{\frac {\sigma+2+\tau}{\nu+1}}2^{m(1-\frac 1{\nu+1})}\\
&\le 2 \cdot 2^{\frac {2}{\nu+1}}2^{\frac t{\nu+1}} 2^{m(1-\frac 1{\nu+1})} ,
\end{align*}
the latter inequality following from \eqref{sigmaup2}. Since $2^{\frac 2{\nu+1}} \le 2^{\frac 23}$, the result follows.

Case $ii$. Suppose that $\sigma \le m-3-\tau$, that is, $m-\sigma \ge \tau+3$. First, by Proposition \ref{propconvert2}, we have $|S_\alpha| \le |S_{\alpha,1}|+|S_{\alpha, 2}|$, with
$$
|S_{\alpha,1}| = 2^{\sigma-2}|S(G_\alpha, 2^{m-\sigma})|,
$$
and the same for $|S_{\alpha,2}|$ with $\alpha$ replace by $\alpha +2$.
Since $m-\sigma \ge \tau+3$, we  can apply Theorem \ref{mainthm1}  to
$S(G_\alpha,2^{m-\sigma})$ and
obtain with $d_1:=\deg_2(H_\alpha)=\deg_2(2^{-\tau}G_\alpha')$,   
$$   
|S_{\alpha,1}|  \le
2^{2/3}\, 2^{\sigma
-2} 2^{\frac {\tau}{d_1+1}}2^{(m-\sigma)(1-\frac
1{d_1+1})}.
$$
The same bound holds for $|S_{\alpha,2}|$.
Now  by \eqref{dph2},
$d_1=\deg_p(H_\alpha)\le \nu$ and thus since $m-\sigma -\tau >0$ we obtain, using \eqref{sigmaup2} again, 
\begin{align*}
|S_{\alpha}|&\le 2 \cdot 2^{2/3}\, 2^{\sigma-2}2^{\frac
{\tau}{\nu+1}}2^{(m-\sigma)(1-\frac
1{\nu+1})}=2^{-1/3}\, 
 2^{\frac {\tau+\sigma}{\nu+1}} 2^{m(1-\frac 
1{\nu+1})}\\
&  \le 2^{-1/3} 2^{\frac {\tau+2\nu+2+t-\tau}{\nu+1}} 2^{m(1-\frac 1{\nu+1})} = 2^{5/3}2^{\frac {t}{\nu+1}} 2^{m(1-\frac
1{\nu+1})}.
\end{align*}

\section{Proof of Corollary \ref{maincorp21}}
Put $d=\deg_p(\mathcal C_+)$, $S=S(\chi,g,f,2^m)$. The bound in \eqref{Cxdegub} holds for $p=2$ and so we again have $d \le D-1$. Assume that $m \ge t+3$. 
  For $1 \le j \le d-1$, let $n_j$ denote the number of critical points of multiplicity $j$ and put $x_j:=jn_j$, so that $\sum_{j=1}^{d}x_j\le d$. Put $\delta := 2^{t-m}$.  Following the proof of Theorem \ref{maintheorem} but using Theorem \ref{maintheoremp2} $(iii)$ instead,  we see that
  $$
  |S(\chi,g,f,2^m)|\le 2^{\frac 53} \cdot 2^{\frac t{d+1}} 2^{m(1-\frac 1{d+1})},
  $$
for $2^{t-m} \le (1+\frac 1{d-1})^{-d(d+1)}$, that is,  $2^{\frac {m-t}{d+1}} \ge (1+\frac 1{d-1})^d$.  Suppose now that $2^{\frac {m-t}{d+1}}<(1+\frac 1{d-1})^d$. We have trivially
\begin{equation} \label{2trivial}
|S| \le 2^m \le  2^{\frac 53} \cdot 2^{\frac t{d+1}} 2^{m(1-\frac 1{d+1})}
\end{equation}
provided that $2^{\frac {m-t}{d+1}} \le 2^{\frac 53}$. Thus it suffices to have $(1+\frac 1{d-1})^d \le 2^{\frac 53}$ which is the case for $d \ge 4$.

We are left with the cases where $d$ equals $1,2$ or 3. Since $p=2$, there are at most two critical points and we may assume that the sum of their multiplicities is $d$.
 If there is a single critical point $\alpha$ of multiplicity $d$, then $|S|=|S_\alpha|$ and the result is immediate from  Theorem \ref{maintheoremp2} $(iii)$. This takes care of the case $d=1$.  If $d=2$ and there are two critical points each of multiplicity one, then by part $(ii)$ of Theorem \ref{maintheoremp2},
$
|S| \le 2 \cdot 2^{\frac {m+t+1}2} < 2^{\frac 53}\, 2^{\frac t{3}} 2^{m(1-\frac 1{3})}.
$

Suppose finally that $d=3$ and that there is
 one critical point of multiplicity 2 and one of multiplicity 1. Then $|S| \le 2^{\frac 53}\, 2^{\frac t3}2^{\frac 23 m}+ 2^{\frac 12+\frac t2 +\frac m2}<2^{\frac 53}\,  2^{\frac t4}2^{\frac 34 m}$ (as desired) provided
that $2^{\frac 53} \, 2^{\frac {t-m}{12}} +2^{\frac 12+\frac {t-m}4} \le 2^{\frac 53}$, which is the case for $m-t \ge 5$.  If $m-t \le 4$ then $2^{\frac {m-t}{d+1}} \le 2<2^{\frac 53}$ and so as noted in \eqref{2trivial}, the  bound holds trivially.

\section{Proof of Theorem \ref{maintheorem0} and Corollary \ref{maincor} for $p=2$}

Let $f,g$ be rational functions over $\mathbb Z$,  
 and
$\chi$ a multiplicative character mod $2^m$ with $m \ge 2$.
Suppose that the sum
$S:=S(\chi,g,f,2^m)$ does not degenerate to one of smaller modulus,
that is, either $\deg_2(f)\ge 1$ or, $\chi$ is primitive and $\deg_2(g) \ge
1$. 
If $D=1$,
 then as for the case of odd $p$ we have $S(\chi,g,f,2^m)=0$ 
 Thus, we may assume that
$D \ge 2$.

Suppose that $\deg_2(f) \ge 1$. 
For $m \le t+2$, using $2^t \le \deg_p(f)$ we have trivially
$$
|S| \le 2^{\frac mD}2^{m(1-\frac 1D)}\le 2^{\frac 2D} 2^{\frac tD} 2^{m(1-\frac 1D)}\le 2 \deg_2(f)^{\frac 1D} 2^{m(1-\frac 1D)}.
$$
If $m \ge t+3$, we obtain from Corollary \ref{maincorp21},  
$$
|S| \le 2^{\frac 53} 2^{\frac t{D}} 2^{m(1-\frac 1D)} \le 2^{\frac 53} \deg_2(f)^{\frac 1D} 2^{m(1-\frac 1D)}.
$$
Suppose now that $\deg_2(g) \ge 1$ so that $2^t \le \deg_p(g)$. As above we obtain for $m \le t+2$,
$$
|S| \le 2^{\frac mD}2^{m(1-\frac 1D)}\le 2^{\frac 2D} 2^{\frac tD} 2^{m(1-\frac 1D)}\le 2 \deg_2(g)^{\frac 1D} 2^{m(1-\frac 1D)}.
$$
If $m \ge t+3$, then
$$
|S| \le 2^{\frac 53} 2^{\frac t{D}} 2^{m(1-\frac 1D)} \le 2^{\frac 53} \deg_p(g)^{\frac 1D} 2^{m(1-\frac 1D)}.
$$

\begin{proof}[Proof of Corollary \ref{maincor} for $p=2$]
If $\deg_2(f) \ge 1$, then 
$$
|S| \le 2^{\frac 53}D^{\frac 1D}2^{m(1-\frac 1D)} \le 2^{\frac 53} 3^{\frac 13} 2^{m(1-\frac 1D)}.
$$
Suppose now that $\deg_2(g) \ge 1$. If $m \le t+2$ then
$$
|S| \le 2^{\frac {t+2}\Delta} 2^{m(1-\frac 1\Delta)}\le 2^{\frac 2\Delta} \deg_2(g)^{\frac 1\Delta} 2^{m(1-\frac 1\Delta)} \le  (4\Delta)^{\frac 1\Delta} 2^{m(1-\frac 1\Delta)}\le 2^{\frac 32} 2^{m(1-\frac 1\Delta)}.
$$
If $m \ge t+3$, then
$$
|S| \le  2^{\frac 53} 2^{\frac t{D}} 2^{m(1-\frac 1D)} \le 2^{\frac 53} 2^{\frac t{\Delta}} 2^{m(1-\frac 1\Delta)}\le 2^{\frac 53} \Delta^{\frac 1\Delta}2^{m(1-\frac 1\Delta)} \le   2^{\frac 53} 3^{\frac 13}2^{m(1-\frac 1\Delta)}.
$$

\end{proof}

\section{Proofs of Proposition \ref{degenprop}, Theorem  \ref{degentheorem} and Theorem \ref{Laurenttheorem} for $p=2$}

The proofs of these results  follow the proofs for odd $p$ so we will be brief.
  Start with Proposition \ref{degenprop}. For cases with $\ell_g=0$ the proof is identical.
 Suppose now that $\ell_g>0$ and $\ell_f>0$ and that $\chi$ is a character mod $2^m$.
 Define $H$ as in \eqref{Hdef1},
\begin{equation} \label{Hdef22}
H(X):= 2^{\ell_f}F(X) + c_\chi \log(1+2^{\ell_g}G(X))= f(X)-f(0)+c_\chi\log(g(X)/g(0)),
\end{equation} 
and say $2^\ell\|H(X)$.
Again we have $2^{t-\ell}| (2^{-\ell}H)'$, and so by Corollary \ref{tboundcor},
\begin{equation} \label{2tlub}
2^{t-\ell} \le \deg_2(2^{-\ell} H):= d_2.
\end{equation}
For $m \ge t+3$, it follows from Corollary \ref{maincor2p2} that
$$
|S(\chi,g,f,2^m)| \le 2^{\frac 53} 2^{\frac t{D}} 2^{m(1-\frac 1D)} \le 2^{\frac 53} 2^{\frac {\ell}D}d_2^{\frac 1D} 2^{m(1-\frac 1D)}.
$$
For $m \le t+2$, we have trivially
$$  
|S(\chi,g,f,2^m)| \le 2^{\frac mD}2^{m(1-\frac 1D)}\le  2^{\frac {t+2}D} 2^{m(1-\frac 1D)} \le 2^{\frac {\ell+2}D}d_2^{\frac 1D} 2^{m(1-\frac 1D)}.
$$
and the result follows since $\frac 2D \le \frac 53$ for $D \ge 2$.

The proofs of Theorem \ref{degentheorem} and Theorem \ref{Laurenttheorem} follow identically as for the case of odd $p$.

\end{small}
\end{document}